\documentclass[table,x11names]{article}

\usepackage{graphicx}
\usepackage[margin=1in]{geometry}
\usepackage{mathtools,amssymb,amsthm}
\usepackage{url}
\usepackage{verbatim}
\usepackage{tikz}
\usetikzlibrary{positioning,arrows.meta}

\usepackage{array}
\newcolumntype{P}[1]{>{\centering\arraybackslash}p{#1}}

\usepackage{hyperref}
\hypersetup{
    colorlinks = true,
    final=true,
    plainpages=false,
    pdfstartview=FitV,
    pdftoolbar=true,
    pdfmenubar=true,
    pdfencoding=auto, 
    psdextra, 
    bookmarksopen=true,
    bookmarksnumbered=true,
    breaklinks=true,
    linktocpage=true
}
\usepackage[nameinlink, capitalise, noabbrev]{cleveref}

\usepackage{csquotes}

\usepackage{caption,subcaption}
\usepackage{float}       
\usepackage{booktabs}

\usepackage{xparse}
\let\realItem\item 
\makeatletter
\NewDocumentCommand\myItem{ o }{%
   \IfNoValueTF{#1}%
      {\realItem}
      {\realItem[#1]\def\@currentlabel{#1}}
}
\makeatother

\usepackage[shortlabels]{enumitem}    
\setlist[enumerate]{
    before=\let\item\myItem,       
    label=\textnormal{(\arabic*)}, 
    widest=(3A)                    
}

\usepackage[color=yellow]{todonotes}
\usepackage{todonotes}
\makeatletter
\define@key{todonotes}{TR}[]{%
    \setkeys{todonotes}{author=TR,color=blue!40,size=scriptsize}}%
\makeatother 

\usepackage[natbib, 
maxcitenames=3, 
maxbibnames=10,  
sorting=nyvt,
giveninits,
url=false,
doi=false,
isbn=false,
sortcites,
defernumbers,
backref,
backend=biber
]{biblatex}

\usepackage{pgfplots}
\usepackage[]{tikz}
\pgfplotsset{compat=1.9}

\crefname{algocf}{Algorithm}{Algorithms}
\usepackage{algorithm}
\usepackage{algpseudocode}

\usepackage{authblk}

\newtheorem{proposition}{Proposition}
\newtheorem{theorem}{Theorem}
\newtheorem{lemma}{Lemma}

\newtheorem{corollary}{Corollary}
\theoremstyle{definition}

\newtheorem{assumption}{Assumption}
\theoremstyle{remark}
\newtheorem{remark}{Remark}
\crefname{assumption}{Assumption}{Assumptions}

\DeclareMathOperator{\supp}{supp}

\newcommand{\R}{\mathbb{R}}

\newcommand{\E}{\mathbf{E}}
\newcommand{\calE}{\mathcal{E}}

\newcommand{\sol}{x^\star}
\newcommand{\OV}{\overline{V}}
\newcommand{\OX}{\overline{X}}

\newcommand{\cP}{\mathcal{P}}

\newcommand{\veps}{\varepsilon}

\newcommand{\lin}{\textup{lin}}
\newcommand{\err}{\textup{Err}}

\definecolor{SkyBlue}{HTML}{46C5DD}

\definecolor{cocoabrown}{rgb}{0.82, 0.41, 0.12}
\definecolor{guppiegreen}{rgb}{0.0, 1.0, 0.5}
\definecolor{grullo}{rgb}{0.66, 0.6, 0.53}
\definecolor{indiagreen}{rgb}{0.07, 0.53, 0.03}
\definecolor{brilliantrose}{rgb}{1.0, 0.33, 0.64}
\definecolor{midnightblue}{rgb}{0.1, 0.1, 0.44}
\definecolor{pyellow}{RGB}{221,170,51}
\definecolor{capri}{rgb}{0.0, 0.75, 1.0}
\definecolor{mirror}{named}{brilliantrose}

\definecolor{cadmiumgreen}{rgb}{0.0, 0.42, 0.24}
\definecolor{hotpink}{rgb}{1.0, 0.41, 0.71}
\definecolor{cgreen}{RGB}{0, 180, 100}
\hypersetup{
    colorlinks=true,
    linkcolor=cadmiumgreen,
    citecolor=cgreen,
    urlcolor=blue,
    linktocpage
}

\numberwithin{equation}{section}

\newcommand{\dt}{\Delta t}

\newcommand{\proba}{\mathbf P}
\newcommand{\expect}{\mathbf{E}}

\title{Long-time Stability and Convergence \\
of Particle Swarm Optimization}
\date{\today}

\author[1]{Giacomo Borghi}
\author[2]{Hui Huang}
\author[3]{Dohyeon Kim}

\affil[1]{Department of Mathematics, School of Mathematical and Computer Sciences (MACS), Heriot-Watt University, Edinburgh, UK}
\affil[2]{School of Mathematics, Hunan University,  Changsha, China}
\affil[3]{Department of Computing and Mathematical Sciences, Caltech, Pasadena, USA}

\begin{document}

\clearpage

\maketitle

\begin{abstract}

Particle Swarm Optimization (PSO) is a global optimization algorithm defined by an interacting set of particles evolving over the search space. Heuristically motivated, its theoretical analysis remains limited due to the second-order, stochastic, and highly nonlinear nature of the dynamics. In this paper, we connect classical PSO stability analysis under the stagnation assumption with more recent mean-field methods, providing new quantitative estimates for the time-discrete algorithm.
We study in particular a regularized PSO model without memory, with non-degenerate noise by adding a noise floor to the original model. 
Studying such a surrogate model allows us to identify quantitative conditions under which the dynamics is stable and converges toward a small neighborhood of a global minimizer. We do so by first studying the Schur stability of the linearized dynamics, then analyzing the convergence properties of a nonlinear mean-field system via a Laplace principle, and finally establishing a quantitative error bound for the mean-field approximation of order $N^{-1/2}$. 

\end{abstract}

\medskip

\textbf{Keywords:} Particle  swarm optimization, Schur stability, particle systems, mean-field limit, global optimization

\bigskip


\section{Introduction}

The goal of this paper is to study a long-time asymptotics of a global optimization algorithm on the discrete time level directly. More specifically, we focus on Particle Swarm Optimization (PSO), which is more broadly, an interacting particle based method, or metaheuristics, for solving an optimization problem. These algorithms, employ a set of agents, or particles, to stochastically explore the optimization search space and eventually converge towards a minimum. They are particularly effective in low-to-moderate-dimensional black-box problems where the objective function is non-differentiable, or too expensive to compute its gradient, or non-convex. These settings appear in hyperparameter optimization during the training  of machine learning models \cite{bergstra2011Algorithms,archambeau2024Hyperparameter,zito2025Metaheuristics}, or Bayesian inverse problems. Some well-known examples include Genetic Algorithms \cite{holland1975}, Differential Evolution \cite{storn1997differential}, Ant Colony Optimization (ACO) \cite{Dorigo1996}, PSO \cite{kennedy_eberhart_1995}, and Consensus-based Optimization (CBO) \cite{pinnau2017consensus}.

As their names suggest, the optimization dynamics are inspired by natural phenomena, such as the flocking of birds in the case of PSO, and ants behavior for \cite{Dorigo1996}. To solve optimization problems of type
\begin{equation}\label{eq:problem}
    \sol \in \underset{x \in \R^d}{\mathrm{argmin}}\, \calE(x),
\end{equation}
where $\calE:\R^d\to\R$ is a continuous objective function, each algorithm will evolve a swarm of particles or agents according to certain dynamics. In the standard PSO algorithm, each \textit{particle}, $i = 1,\dots,N$ is described by a position vector $X^i_n\in \R^d$ and a velocity $V_n^i\in \R^d$. The particles are attracted both toward its own best previously visited position $p^i_n$ (\emph{personal best}, or \(pbest\)) and toward the global best position found by the swarm $g_n$ (\emph{global best}, or \(gbest\)) at step $n\geq 0$ \cite{kennedy_eberhart_1995}. Inertia is typically included in the velocity update to stabilize the dynamics \cite{shi1998modified}, leading to an update of the form 
\begin{equation}\label{eq:classical-pso}
\begin{dcases}
X_{n+1}^i &= X_n^i + V_{n+1}^i \\
V_{n+1}^i & = (1 - \gamma) V_n^i +  c_1 r^{1,i}_{n}\odot \bigl(p^{i}_{n}-X^{i}_{n}\bigr)
    + c_2 r^{2,i}_{n}\odot\bigl(g_{n}-X^{i}_{n}\bigr), 
\end{dcases}
\end{equation}
for $i = 1,\dots,N$. Above, $c_1, c_2>0$ are parameters,  \(r_{1,i}^{n},r_{2,i}^{n} \) are random vectors, and $\gamma\in (0,1)$ is a friction parameter which introduces an inertia $m  = 1-\gamma$. With $\odot$ we indicate the component-wise multiplication between vectors.

In recent years there has been an effort to provide a more rigorous mathematical footing to such heuristic strategies by looking at them as interacting particle systems to be studied under the lens of statistical physics. This has allowed not only to understand their convergence properties \cite{delmoral2001Asymptotic,borghi2025ga,huang_qiu_riedl_2023,carrillo2018analytical}, but also to draw connections between different strategies and suggest improvements \cite{grassi_pareschi_2021, carrillo2021anisotropic,chen2020consensus}.
Along this line of work, we study a regularized version of PSO which preserves the key interaction mechanism, while also being amenable to mean-field approximation. This is in line with the convergence analysis provided for Consensus-Based Optimization (CBO) \cite{pinnau2017consensus}, which share some algorithmic similarities. Nonetheless, CBO is of first-order dynamics, while \eqref{eq:classical-pso} is a second-order model.

\subsection{Literature review}

The PSO update \eqref{eq:classical-pso} is non-linear due to the presence of the particles' personal best positions $p_n^i$ and global best positions $g_n$, which not only depend on the entire particle system, but also on its history. Moreover, the stochasticity of the system introduces an additional layer of complexity.

To study the stability of the system, a first classical approach consists of making the so-called \textit{stagnation} assumption \cite{clerc2002explosion, trelea2003dynamic, jiang2007stochastic, poli2009mean}
\begin{equation} \label{eq:stagnation} \tag{S}
p^i_n, g_n \approx const \,.
\end{equation}
Under \eqref{eq:stagnation}, early analyses reduce PSO to a linear second-order difference equation with fixed attractors. In this deterministic setting, \cite{clerc2002explosion} studies stability through the eigenvalues of the associated transition matrix, while \cite{kadirkamanathan2006stability} develops a related Lyapunov-based analysis. Later works retain the stochastic coefficients and characterize stability in terms of first- and second-order moments: \cite{jiang2007stochastic} and \cite{poli2009mean} derive spectral-radius conditions for convergence of the expectation and variance of the stagnated dynamics. Subsequent contributions relax strict stagnation by allowing the best positions to be random or partially time-dependent while remaining analytically decoupled from the particles: \cite{fernandez2011stability} considers stochastic attractors and oscillatory behaviour, \cite{liu2015order} studies a weak-stagnation regime where personal bests may improve while the global best is fixed, and \cite{bonyadi2016stability,cleghorn2018particle} model the bests through fixed or time-varying random vectors. In this line of work, the central question is the stability of the linearized PSO system under \eqref{eq:stagnation}, and there is no analysis of the interplay between particles.

A complementary analysis, initiated in \cite{grassi_pareschi_2021} and further developed in \cite{huang_2021_note, huang_qiu_riedl_2023}, regularizes the PSO dynamics to derive a mean-field approximation of the interacting particle system. To do so, a crucial step consists of regularizing the global best by using exponential weights. For a probability distribution $\rho\in \mathcal{P}(\R^d)$, the regularized best is given by the weighted point
\begin{equation}\label{eq:consensuspoint}
    x^\alpha[\rho]
    :=
    \frac{\int x\, e^{-\alpha \calE(x)}\,\rho(dx)}
         {\int e^{-\alpha \calE(x)}\,\rho(dx)}
         \qquad \textup{with}\quad \alpha \gg 1 .
\end{equation}
If one considers the empirical distribution $\rho^N_n = (1/N)\sum_{i=1}^N \delta_{X_n^i}$, then, provided the minimizer of \(\mathcal E\) over the current particle locations is unique,
\[
x^\alpha[\rho^N_n] \longrightarrow g_n \qquad \textup{as} \quad \alpha \to \infty \,.
\]
Thus \(x^\alpha[\rho_n^N]\) regularizes the best current particle position for  $\alpha\gg1$. In PSO models with memory, which is studied in \cite{grassi_pareschi_2021,huang_2021_note, huang_qiu_riedl_2023}, the same construction may instead be applied to regularized personal-best variables in order to approximate the historical global best. We will consider a memory-less PSO dynamics for simplicity. Numerically, it has shown that getting rid of the personal best has a negligible effect the algorithmic performance \cite{doi:10.1142/9789811266140_0003}. The corresponding stochastic differential equation (SDE) in this case reads, for $i=1,\dots,N$,
\begin{equation}
\label{eq:tc-pso}
\begin{cases}
    dX_t^i &= V^i_t dt, \\
    mdV_t^i &= - \gamma V_t^i dt + \lambda(x^\alpha[\rho^N_t] - X_t^i) + \sigma(x^\alpha[\rho^N_t] - X_t^i)\odot dB_t^i \,.
\end{cases} 
\end{equation}
In the continuous-time description, the velocity update is split into two components: a deterministic one, which depends on a parameter $\lambda>0$, and a stochastic one, which depends on a diffusion parameter $\sigma>0$ and independent Brownian processes $(B_t^i)_{t\geq 0}$, $i = 1,\dots,N$.

The key aspect of this regularized PSO dynamics is that, in the many-particle limit $N\to \infty$, the particle system can be approximated by a single-particle process of McKean--Vlasov type. This is the so-called mean-field approximation of the system. This is closely related to Propagation of chaos \cite{MR1108185}, which is an asymptotic independence of interacting particles as $N \to \infty$. Specifically, the empirical measure $\rho^N_t$ is approximated by a deterministic distribution $\rho_t\in \mathcal{P}(\R^d)$,
\begin{equation}\label{eq:asm:mf} \tag{MF}
\rho^N_t \approx \rho_t \qquad \textup{for}
\quad N \gg 1\,,
\end{equation}
leading, in turn, to an approximation of the regularized best point as $x^\alpha[\rho_t^N]\approx x^\alpha[\rho_t]$. The resulting nonlinear particle system is given by 
\begin{equation}
\label{eq:tc-pso-mf}
\begin{cases}
    d\OX_t &= \OV_t dt, \\
    md\OV_t &= - \gamma \OV_t dt + \lambda(x^\alpha[\rho_t] - \OX_t) + \sigma(x^\alpha[\rho_t] - \OX_t)\odot dB_t, \\
    \rho_t &=\mathrm{Law}(\OX_t) \,.
\end{cases} 
\end{equation}
The rigorous derivation of the mean-field limit $N \to \infty$ in the more general setting with personal bests has been studied in \cite{huang_2021_note}. In \cite{huang_qiu_riedl_2023}, the authors study the convergence of such mean-field PSO models towards solutions to \eqref{eq:problem}. And more recently, such a uniform in time mean-field limit result for a PSO model without personal best was shown in \cite{ha2026uniformintimepropagationchaossecondorder}, which was an extension of the previous work \cite{gerber2026uniformintimepropagationchaosconsensusbased} on the first-order dynamics.

The analysis of the PSO particle system via the mean-field approximation \eqref{eq:asm:mf} derives from the large body of literature on the Consensus-Based Optimization (CBO) algorithm, see, to name a few, \cite{pinnau2017consensus,carrillo2018analytical,carrillo2021anisotropic,fornasier2024convergence}. Indeed, the particle system \eqref{eq:tc-pso} can be seen as a second-order version of CBO, which can be recovered as the zero-inertia limit \cite{cipriani_huang_qiu_2022, doi:10.1142/9789811266140_0003}. Therefore, many of the techniques developed in this context have been transferred to the analysis of PSO-type interactions.

\subsection{Our contribution}

The objective of this work is to bring together the stability analysis based on the stagnation assumption \eqref{eq:stagnation} with the convergence towards minima based on the mean-field approximation \eqref{eq:asm:mf}. This will allow us to provide quantitative estimates of convergence towards solutions to \eqref{eq:problem} at large times.

We will consider a second-order PSO dynamics with non-degenerate noise,
\begin{equation}\label{eq:2nd-cbo-td-particle}
\begin{dcases}
    X_{n+1}^i = X_n^i + V_{n+1}^i,\\
    V_{n+1}^i
    = (1-\gamma)V_n^i
      + \lambda \bigl(x^\alpha[\rho_n^N]-X_n^i\bigr)
      + \sigma \bigl(\sigma_0 + |x^\alpha[\rho_n^N]-X_n^i|\bigr)\xi_n^i .
\end{dcases}
\end{equation}
The floor parameter $\sigma_0>0$ guarantees a strictly positive level of exploration at every step.
To be closer to the standard PSO dynamics \eqref{eq:classical-pso}, the dynamics is discrete in time, as actual implementable algorithms are. The random variables $\xi_n^i$ are i.i.d. standard Gaussian vectors. We note that introducing a non-degenerate diffusion via a baseline noise level $\sigma_0$ has already been considered in the CBO literature (see, e.g., \cite{bungert2025polarized, bianchi2025consensus, borghi2025bgk}). This approach prevents the premature convergence of particles to local minimizers—a phenomenon particularly prevalent when the global minimizer lies outside the support of the initial particle distribution, as observed in \cite{huang2025self, huang2025faithful} and further discussed in \cite{fornasier2026consensus}.
Although in this setting the particles are not expected to concentrate at the global minimizer due to the non-degenerate noise, the final output $x^\alpha[\rho_{n_T}^N]$ of the algorithm approximates the target as $\alpha \to \infty$ by virtue of the Laplace principle.

The main steps of our analysis are:
\begin{itemize}
\item \textbf{Mean-field approximation.}
We derive precise conditions under which the PSO-type particle system \eqref{eq:2nd-cbo-td-particle} converges to the corresponding mean-field single-particle process,
\begin{equation}\label{eq:2nd-cbo-td-mf}
\begin{dcases}
    \OX_{n+1} = \OX_n + \OV_{n+1},\\
    \OV_{n+1}
    = (1-\gamma)\OV_n
      + \lambda \bigl(x^\alpha[\rho_n]-\OX_n\bigr)
      + \sigma \bigl(\sigma_0 + |x^\alpha[\rho_n]-\OX_n|\bigr)\xi_n,\\
    \rho_n = \mathrm{Law}(\OX_n).
\end{dcases}
\end{equation}
By a coupling method, we show that the $N$-particle system tracks the mean-field dynamics with rate $\mathcal{O}(N^{-1/2})$. The analysis makes use of estimates derived for the CBO particle system \cite{gerber2025mean, Hui_Huang_2025_cpaa}, and in particular of the stability of the consensus point \eqref{eq:consensuspoint}.

\item \textbf{Long-time quantitative Laplace principle via a bootstrap argument.}
By relying on a quantitative version of the Laplace principle \cite{huang_qiu_riedl_2023}, we control the difference between the consensus point $x^\alpha[\rho_n]$ and a global solution $\sol$ to \eqref{eq:problem}. A key challenge in estimating $|x^\alpha[\rho_n] - \sol|$ at every step $n \geq 0$ is to bound from below the mass that $\rho_n$ allocates around the minimizer, and this is where the non-degenerate noise assumption is used. Using a bootstrap argument, we are able to provide a uniform-in-time bound of the form
\[
|x^\alpha[\rho_n] - \sol|^2 \lesssim \veps \qquad \textup{for all}\quad n\in \mathbb{N},
\]
provided $\alpha>0$ is sufficiently large.

\item  \textbf{Mean-square stability via error decay.}

The above bound on the consensus point $x^\alpha[\rho_n]$ suggests that the mean-field dynamics evolves almost in a stagnation regime \eqref{eq:stagnation}, since $\sol = const$. This allows us to study the stability of the linearized dynamics
\begin{equation}\label{eq:lin}
\begin{dcases}
    X_{n+1}^{\lin}=X_n^{\lin}+V_{n+1}^{\lin},\\
    V_{n+1}^{\lin}
    =(1-\gamma)V_n^{\lin}
    +\lambda(\sol-X_n^{\lin})
    +\sigma\bigl(\sigma_0+|\sol-X_n^{\lin}|\bigr)\xi_n,
\end{dcases}
\end{equation}
to infer stability of the nonlinear mean-field particle system.
We do this by studying a Lyapunov functional for random state vectors $(X,V)$,
\begin{equation} \label{eq:err_functional}
     \err(X,V)
     =\E\Big[\,|X-x^\star|^2
    +C_H\Big|V-\frac{\lambda}{\gamma}(x^\star-X)\Big|^2
    +2\theta\Big\langle X-x^\star,\,V-\frac{\lambda}{\gamma}(x^\star-X)\Big\rangle\Big]\,.
\end{equation}
which depends on paramaters $C_H, \theta$. 
Using Stein's theorem for the deterministic transition matrix $A$, we prove that for parameters \((\lambda,\gamma)\) satisfying the Schur stability condition \(\varrho(A)<1\), there exists a positive-definite metric \(P= \begin{pmatrix}
    1 & \theta\\
    \theta & C_H
    \end{pmatrix}\succ0\,,\) and equivalently, the parameters $C_H, \theta$ that captures the dissipative structure of the linearized dynamics. Under a suitable small-noise assumption, transferring this geometry to the nonlinear system \eqref{eq:2nd-cbo-td-mf} yields a geometric decay estimate up to a residual forcing term driven by the consensus error \(|x^\alpha[\rho_n]-\sol|^2\) and the ambient noise floor \(\sigma_0^2\).
\end{itemize}

\begin{figure}[t]
    \centering
    \begin{tikzpicture}[
        node distance=3.2cm and 4.2cm,
        box/.style={draw, rounded corners, align=center, inner sep=6pt},
        arr/.style={->, thick}
    ]

    \node[box] (pso) {Particle system \eqref{eq:2nd-cbo-td-particle}};
    \node[box, right=of pso] (mf) {Mean-field dynamics \eqref{eq:2nd-cbo-td-mf}};
    \node[box, below=of mf] (lin) {Linearized proxy \eqref{eq:lin}};
    \node[box, below=of pso] (sol) {Global minimizer \(\sol\)};

    \draw[arr] (pso) -- node[above] {$N\to \infty$ (Section \ref{sec: mfl})} (mf);
    \draw[arr] (mf) -- node[right,text width=3.5cm] {Bootstrap Laplace principle
    (Section \ref{sec: convergence})} (lin);
    \draw[arr] (lin) -- node[below,text width=4cm] {Lyapunov contraction (Section \ref{sec: contractivity})} (sol);
    \draw[arr, dashed] (pso) -- node[left] {long-time analysis} (sol);

    \end{tikzpicture}
    \caption{Paper structure and proof strategy blueprint.}
    \label{fig:proof_strategy}
\end{figure}

Altogether, the main results are as follows. First, \cref{t:minimizer} establishes the global convergence of the mean-field dynamics \eqref{eq:2nd-cbo-td-mf}. Specifically, it proves that for a sufficiently large $\alpha$ and a small ambient noise floor $\sigma_0$, the mean-field error functional \eqref{eq:err_functional} decays geometrically to any prescribed tolerance $\varepsilon$, satisfying:$$\mathrm{Err}(\overline{X}_n, \overline{V}_n) \le \max\left\{(1-\mu/2)^n\mathrm{Err}(\overline{X}_0, \overline{V}_0), \varepsilon\right\} \quad \text{for all } n \ge 0.$$
Next, \cref{thm: mfl-2nd-order-cbo} rigorously justifies the mean-field approximation via a synchronous coupling argument. It demonstrates that the finite-$N$ particle scheme tracks the mean-field trajectories over any finite horizon $n_T$ with a quantitative error rate of $\mathcal{O}(N^{-1/2})$, establishing that:$$\sup_{i=1,\dots,N} \E\left[\sup_{0 \le n \le n_T} \left(|X_n^i - \overline{X}_n^i| + |V_n^i - \overline{V}_n^i|\right)\right] \le C(n_T)N^{-1/2}.$$
Finally, \cref{thmmain} combines these estimates to provide an quantitative convergence guarantee for the finite-particle scheme. By combining the mean-field tracking error and the optimization tolerance at the iteration $n_T$, it bounds the expected squared distance between the particles' empirical mean and the true global minimizer $x^*$ as:$$\E\left[\left|\frac{1}{N}\sum_{i=1}^N X_{n_T}^i - x^*\right|^2\right] \le C_1\frac{1}{N} + C_2\varepsilon.$$ The analysis strategy and structure of the paper leading to these results are summarized in Figure \ref{fig:proof_strategy}.

\section{Contractivity of dynamics via discrete Lyapunov functional}\label{sec: contractivity}
The first step in our convergence analysis is to establish the mean-square stability of the time-discrete particle dynamics. For simple first-order gradient-like iterations, the squared Euclidean distance to the minimizer often provides a natural Lyapunov function under suitable assumptions and step-size conditions. In the second-order PSO dynamics considered here, however, the position error is coupled with the velocity, and stability of the position–velocity system does not generally imply a monotonic decrease of the Euclidean position error. Moreover, even when the linearized dynamics is asymptotically stable, its trajectories may exhibit transient growth or oscillatory behavior. 

To overcome this, we construct a quadratic Lyapunov functional adapted to the discrete map. We first linearize the mean-field dynamics around the global minimizer $x^\star$. In \cref{prop:h2_decay_dt1}, by using the discrete Lyapunov (Stein) theorem (\cref{thm:stein}), we construct a positive-definite matrix $P \succ 0$ associated with the system's transition matrix. Using the corresponding quadratic metric, we prove geometric decay for the nonlinear mean-field dynamics up to error terms arising from the consensus approximation and the noise.

\begin{lemma}[Stein's Theorem]\label{thm:stein}
    Let $A \in \mathbb{R}^{d \times d}$. The matrix $A$ is Schur stable (i.e., its spectral radius satisfies $\varrho(A) < 1$) if and only if for every symmetric positive definite matrix $Q \succ 0$, there exists a unique symmetric positive definite matrix $P \succ 0$ satisfying the discrete Lyapunov (Stein) equation:
    \begin{equation}
        A^\top P A - P = -Q.
    \end{equation}
\end{lemma}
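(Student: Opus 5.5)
We prove the two implications separately. The forward direction uses the explicit series solution of the Stein equation. The converse is a spectral argument: test the equation against an eigenvector of $A$.

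\emph{($\Rightarrow$) Existence.} Assume $\varrho(A)<1$ and fix $Q\succ0$. By Gelfand's formula there exist a constant $C\ge 1$ and some $r\in(\varrho(A),1)$ with $\|A^k\|\le C r^k$ for all $k\ge 0$. Hence the series
\[
P:=\sum_{k=0}^{\infty}(A^\top)^k Q A^k
\]
converges absolutely in operator norm. Each term is symmetric positive semidefinite, and the $k=0$ term is $Q$, so $P$ is symmetric and $P\succeq Q\succ0$. A telescoping computation gives
\[
A^\top P A=\sum_{k\ge1}(A^\top)^k Q A^k=P-Q,
\]
which is the Stein equation.

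\emph{($\Rightarrow$) Uniqueness.} Suppose $P_1,P_2$ both solve the equation and set $D=P_1-P_2$. Then $A^\top D A=D$. Iterating gives $D=(A^\top)^k D A^k$ for every $k$, so
\[
\|D\|\le C^2 r^{2k}\|D\|\to 0,
\]
and therefore $D=0$. Note that uniqueness holds among all matrices, not only among symmetric positive definite ones.

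\emph{($\Leftarrow$).} Assume that for some (indeed every) $Q\succ0$ there is $P\succ0$ with $A^\top PA-P=-Q$. Let $\lambda\in\mathbb{C}$ be an eigenvalue of $A$ with eigenvector $v\in\mathbb{C}^d\setminus\{0\}$, so $Av=\lambda v$. Multiply the equation by $v^*$ on the left and $v$ on the right. Since $A$ and $P$ are real, $(Av)^*=\bar\lambda v^*$, which gives
\[
(|\lambda|^2-1)\,v^*Pv=-v^*Qv.
\]
For real symmetric positive definite $M$ and $v=a+ib\neq0$ with $a,b$ real, the cross terms cancel by symmetry, so $v^*Mv=a^\top Ma+b^\top Mb>0$. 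Thus both $v^*Pv$ and $v^*Qv$ are strictly positive. It follows that $|\lambda|^2-1<0$, i.e.\ $|\lambda|<1$. Since $\lambda$ was an arbitrary eigenvalue, $\varrho(A)<1$.

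The main technical points are two. First, the uniform geometric bound $\|A^k\|\le Cr^k$ is needed, and it follows from Gelfand's formula or from the Jordan form. Second, the converse must be handled with complex eigenvectors, since $A$ may have non-real eigenvalues. Only one $Q$ is actually needed for the converse, so the statement is consistent with either reading of the quantifier.
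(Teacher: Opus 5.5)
Your proof is correct and complete. The series $\sum_{k\ge0}(A^\top)^kQA^k$ with $\|A^k\|\le Cr^k$ gives existence and $P\succeq Q\succ0$. The iteration $D=(A^\top)^kDA^k$ gives uniqueness, even among non-symmetric matrices. Testing against a complex eigenvector, using $v^*Mv=a^\top Ma+b^\top Mb$ for real symmetric $M$, gives the converse from a single $Q$. The paper gives no argument of its own: it cites Lancaster--Tismenetsky, Theorem 13.2.1. So your standard self-contained proof replaces a citation rather than competing with a different method. The citation is shorter; your version exposes structure the paper uses later.

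What it buys is the explicit representation $\widetilde P=\sum_{k\ge0}(A^\top)^kA^k$ for the matrix in \cref{prop:h2_decay_dt1}. There $\mu_0=q/\lambda_{\max}(P)=1/\lambda_{\max}(\widetilde P)$. Since $\widetilde P\succeq I_2+A^\top A$ and $A\neq0$, you get $\mu_0\le 1/(1+\|A\|_2^2)<1$ immediately. The paper instead obtains $\mu_0<1$ by a separate trace argument. Your remark that uniqueness holds among all matrices is also exactly what makes the normalization $P=\widetilde P/\widetilde P_{11}$ in the paper well defined.
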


\begin{proof}
    For the proof, we refer to \cite[Theorem 13.2.1]{lancaster1985theory} 
\end{proof}

\subsection{Energy decay}\label{subsec:h2-unit-dt} 

Let $(\overline X_n,\overline V_n)_{n\ge0}$ satisfy the mean-field dynamics \eqref{eq:2nd-cbo-td-mf}. The deterministic linear part of this system is governed, in the shifted variables, by the transition matrix
\[
    A := \begin{pmatrix}
    1-\frac{\lambda}{\gamma} & 1-\gamma\\
    -\frac{\lambda^2}{\gamma^2} & (1-\gamma)\left(1+\frac{\lambda}{\gamma}\right)
    \end{pmatrix}.
\]
We now identify the corresponding Schur-stable parameter regime. By the Jury criterion \cite{jury1964theory}, the roots of a real quadratic polynomial for $\tau, \delta \in \mathbb{R}\,,$
\[
    z^2-\tau z+\delta
\]
lie in the open unit disk if and only if
\[
    1-\delta>0,\qquad 
    1-\tau+\delta>0,\qquad 
    1+\tau+\delta>0.
\]
Applying this criterion to the characteristic polynomial
$
    p_A(z)=z^2-\operatorname{tr}(A)z+\det(A),
$
we compute
$    \operatorname{tr}(A)=2-\gamma-\lambda
$ and $
    \det(A)=1-\gamma.
$
Hence, substituting these expressions into the three Jury inequalities gives
\[
    \gamma>0,\qquad 
    \lambda>0,\qquad 
    4-2\gamma-\lambda>0\,.
\]
Equivalently, $\rho(A)<1$ if and only if
\begin{equation}
\label{eq: jury-condition-forA}
    0<\gamma<2,\qquad 0<\lambda<4-2\gamma.
\end{equation}
For any parameters $(\lambda,\gamma)$ in this Schur-stable regime, \cref{thm:stein} guarantees the existence of a unique symmetric positive definite matrix $\widetilde P\succ0$ solving $
    A^\top \widetilde P A-\widetilde P=-I_2.
$
Normalizing $\widetilde P$ by its first diagonal entry yields
\[
    P:=\frac{1}{\widetilde P_{11}}\widetilde P
    =:
    \begin{pmatrix}
    1 & \theta\\
    \theta & C_H
    \end{pmatrix}
    \succ0.
\]
In particular, that $P\in \R^{2\times 2}$ is positive semi-definite implies that $C_H>\theta^2$. By introducing the Kronecker product notation$$P \otimes I_d := \begin{pmatrix} I_d & \theta I_d \\ \theta I_d & C_H I_d \end{pmatrix} \in \mathbb{R}^{2d \times 2d},$$we can rewrite the error functional \eqref{eq:err_functional} as
\begin{equation}
 \err(X,V) = \expect\left[ Z^\top (P \otimes I_d) Z \right], \quad \text{with } Z := \binom{X-x^\star}{V-\frac{\lambda}{\gamma}(x^\star-X)} \in \mathbb{R}^{2d}.   
\end{equation}
With this adapted quadratic metric in hand, we now state the unit-step energy contraction estimate.

\begin{proposition}[Energy Functional decay] \label{prop:h2_decay_dt1}
    Assume the parameters $\lambda,\gamma$ satisfy \eqref{eq: jury-condition-forA}, and let $P$, $\theta$, $C_H$, and $\err(X,V)$ be defined as above. Let $q := 1/\widetilde{P}_{11} > 0$ and define the base deterministic contraction rate $\mu_0 := q / \lambda_{\max}(P) \in (0, 1)$. 
    Furthermore, for $b:=\binom{1}{1+\frac{\lambda}{\gamma}}$, assume the small-noise condition holds:
    \begin{equation}\label{eq:sigma_small-1}
        d\,\sigma^2\,\frac{b^\top P b}{\lambda_{\min}(P)}\le \frac{\mu_0}{12} \,.
    \end{equation}
    Then, there exist constants $\mu \in (0, 1)$ and $C_1 > 0$ such that for all $n\ge0$,
    \begin{equation}\label{eq:H2-recursion-1}
        \err(\overline X_{n+1},\overline V_{n+1})\ \le\ (1-\mu)\,\err(\overline X_n,\overline V_n)\ +\ C_1 \bigl(\sigma_0^2+|x^\alpha[\rho_n]-x^\star|^2\bigr).
    \end{equation}
\end{proposition}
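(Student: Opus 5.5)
The plan is to write the mean-field dynamics \eqref{eq:2nd-cbo-td-mf} in the shifted variables $Z_n=(e_n,w_n)$, with $e_n:=\OX_n-\sol$ and $w_n:=\OV_n-\frac{\lambda}{\gamma}(\sol-\OX_n)=\OV_n+\frac{\lambda}{\gamma}e_n$. The aim is to show that $Z_n$ follows the linear map $A$ plus an additive forcing along the fixed direction $b$. Set $\delta_n:=x^\alpha[\rho_n]-\sol$, which is deterministic, and $s_n:=\sigma(\sigma_0+|x^\alpha[\rho_n]-\OX_n|)$, which is a scalar. The velocity update then reads $\OV_{n+1}=(1-\gamma)\OV_n-\lambda e_n+\lambda\delta_n+s_n\xi_n$. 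Substituting $\OV_n=w_n-\frac{\lambda}{\gamma}e_n$, using $e_{n+1}=e_n+\OV_{n+1}$, and using $w_{n+1}=(1+\frac{\lambda}{\gamma})\OV_{n+1}+\frac{\lambda}{\gamma}e_n$, a direct computation should give
\[
Z_{n+1}=(A\otimes I_d)Z_n+b\otimes\bigl(\lambda\delta_n+s_n\xi_n\bigr),\qquad b=\binom{1}{1+\frac{\lambda}{\gamma}}.
\]
For example, the first component is $e_{n+1}=(1-\frac{\lambda}{\gamma})e_n+(1-\gamma)w_n+\lambda\delta_n+s_n\xi_n$, which matches the first row of $A$. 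This identity is the structural core of the argument, and I will verify it entry by entry.

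Next I expand $\err(\OX_{n+1},\OV_{n+1})=\E[Z_{n+1}^\top(P\otimes I_d)Z_{n+1}]$ into three pieces:
\begin{enumerate}
\item the deterministic quadratic term $\E[Z_n^\top(A^\top PA\otimes I_d)Z_n]$;
\item a cross term $2\lambda\,\E[\langle (A\otimes I_d)Z_n,(Pb\otimes I_d)\delta_n\rangle]$;
\item the forcing energy $b^\top Pb\,\E|\lambda\delta_n+s_n\xi_n|^2$.
\end{enumerate}
The cross terms involving $\xi_n$ vanish, because $\xi_n$ is independent of $(\OX_n,\OV_n)$ and has mean zero. For the first piece, the Stein equation $A^\top\widetilde PA-\widetilde P=-I_2$ (\cref{thm:stein}), normalized by $\widetilde P_{11}$, gives $A^\top PA=P-qI_2$. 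Since $I_2\succeq P/\lambda_{\max}(P)$, this yields $A^\top PA\preceq(1-\mu_0)P$. The same identity shows $P\succeq qI_2$, so $\mu_0\le 1$; strictness follows from $A^\top PA\neq0$. I will then use the Kronecker structure to lift this $2\times2$ inequality to $P\otimes I_d$.

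The remaining two pieces are where the small-noise condition \eqref{eq:sigma_small-1} enters, and I expect this constant bookkeeping to be the main obstacle. For the forcing energy, $\E|s_n\xi_n|^2=d\,\E[s_n^2]$. Then $s_n^2\le 2\sigma^2(\sigma_0^2+2|\delta_n|^2+2|e_n|^2)$, and $\E|e_n|^2\le \err/\lambda_{\min}(P)$. So the noise contributes at most $4d\sigma^2\frac{b^\top Pb}{\lambda_{\min}(P)}\err\le\frac{\mu_0}{3}\err$, plus $C(\sigma_0^2+|\delta_n|^2)$. For the cross term, I apply Young's inequality in the $P$-inner product:
\[
2\lambda\langle AZ,Pb\,\delta\rangle\le \eta\,(AZ)^\top P(AZ)+\eta^{-1}\lambda^2\,b^\top Pb\,|\delta|^2,
\]
with $\eta=\mu_0/4$. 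The $\lambda^2|\delta_n|^2$ part of the forcing energy is bounded by $2\lambda^2 b^\top Pb|\delta_n|^2$. Collecting everything, the coefficient of $\err(\OX_n,\OV_n)$ is at most $(1+\mu_0/4)(1-\mu_0)+\mu_0/3\le 1-\mu_0/3$. This gives \eqref{eq:H2-recursion-1} with $\mu=\mu_0/3\in(0,1)$, and with $C_1$ depending only on $\lambda,\gamma,\sigma,d,P$. If these constants do not close as written, I will rebalance the Young parameter $\eta$; the factor $12$ in \eqref{eq:sigma_small-1} leaves enough room for this. Finite second moments of $(\OX_n,\OV_n)$ follow inductively from the same recursion.
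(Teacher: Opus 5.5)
Your proposal is correct and follows essentially the same route as the paper: the same shifted variables and recursion $Z_{n+1}=(A\otimes I_d)Z_n+b\otimes(\lambda\delta_n+s_n\xi_n)$, the normalized Stein identity $A^\top PA=P-qI_2$ for the linear part, Young's inequality for the deterministic cross term, and the small-noise condition to absorb the $|e_n|^2$ part of the multiplicative noise. The differences are cosmetic: you apply Young in the $P$-inner product rather than the Euclidean one, and you use the cruder bound $s_n^2\le 2\sigma^2(\sigma_0^2+2|\delta_n|^2+2|e_n|^2)$. This gives $\mu=\mu_0/3$ instead of the paper's $\mu_0/2$, which is still admissible since the statement only asserts the existence of some $\mu\in(0,1)$.
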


\begin{proof}
    Introduce the error variables
    \[
    e_n:=\overline X_n-x^\star,\qquad
    u_n:=\overline V_n-\frac{\lambda}{\gamma}(x^\star-\overline X_n)
    =\overline V_n+ \frac{\lambda}{\gamma} e_n,\qquad
    Z_n:=\binom{e_n}{u_n}\in\R^{2d}.
    \]
    By construction, the functional evaluates to $\err(\overline X_n,\overline V_n)=\E\big[Z_n^\top(P\otimes I_d)Z_n\big]$. 

    Using $\overline V_n=u_n-\frac{\lambda}{\gamma} e_n$ and $x^\alpha[\rho_n]-\overline X_n=(x^\alpha[\rho_n]-x^\star)-e_n$, a direct algebraic computation from \eqref{eq:2nd-cbo-td-mf} yields the compact vector recursion:
    \begin{equation}\label{eq:Zn-recursion}
        Z_{n+1}=(A\otimes I_d)Z_n+\lambda\,(b\otimes I_d)(x^\alpha[\rho_n]-x^\star)
        +\sigma(\sigma_0+|x^\alpha[\rho_n]-\overline X_n|)\,(b\otimes I_d)\xi_n.
    \end{equation}

    Let $\mathcal F_n:=\sigma(\xi_0,\dots,\xi_{n-1})$. Since $\xi_n \in \R^d$ is independent of $\mathcal F_n$ with $\E[\xi_n|\mathcal F_n]=0$ and $\E[|\xi_n|^2|\mathcal F_n]=d$, the cross-terms involving the noise vanish. Expanding $Z_{n+1}^\top(P\otimes I_d)Z_{n+1}$ and taking the expectation yields the decomposition:
    \begin{align}
        \err(\overline X_{n+1},\overline V_{n+1})
        &=\E\big[Z_n^\top(A^\top P A\otimes I_d)Z_n\big]
        +\lambda^2(b^\top P b)\,|x^\alpha[\rho_n]-x^\star|^2
        \nonumber\\
        &\quad
        +2\lambda\,\E\Big[\big\langle (b^\top P A\otimes I_d)Z_n,\ x^\alpha[\rho_n]-x^\star\big\rangle\Big]
        +d\,\sigma^2(b^\top P b)\,\E\big[(\sigma_0+|x^\alpha[\rho_n]-\overline X_n|)^2\big].
    \label{eq:H2-split-1}
    \end{align}

    \smallskip
    \noindent\textbf{Step 1: Linear part via Stein's Theorem.}
    From the definition of our metric, $P$ satisfies $A^\top P A - P = -q I_2$. For any $z\in\R^{2d}$, this implies:
    \[
        z^\top(A^\top P A\otimes I_d)z = z^\top(P\otimes I_d)z - q|z|^2 \,.
    \]
    Using the Rayleigh quotient lower bound $|z|^2 \ge \frac{1}{\lambda_{\max}(P)} z^\top(P\otimes I_d)z$, we obtain:
    \[
        z^\top(A^\top P A\otimes I_d)z \le \Big(1 - \frac{q}{\lambda_{\max}(P)}\Big) z^\top(P\otimes I_d)z\,.
    \]
    We claim that $\mu_0:=\frac{q}{\lambda_{\max}(P)}\in(0,1)\,.$ The lower bound is immediate from $P\succ0$ and $q>0$. For the upper bound, note that $A^\top P A=P-qI_2\succeq0\,.$ Moreover, $A\neq0$, since $A_{21}=-\frac{\lambda^2}{\gamma^2}<0$. As $P\succ0$, this implies $A^\top P A\neq0$, and hence $
0 < \operatorname{tr}(A^\top P A) = \operatorname{tr}(P)-2q\,.$
Therefore, $q<\frac12\operatorname{tr}(P)
    \le \lambda_{\max}(P)\,,$
which proves that $\mu_0\in(0,1)$.
    Taking expectations yields a strict contraction for the linear part at the base rate $\mu_0$: 
    \begin{equation}\label{eq:lin_part_bound-1}
        \E\big[Z_n^\top(A^\top P A\otimes I_d)Z_n\big]\le (1-\mu_0)\,\err(\overline X_n,\overline V_n)\,.
    \end{equation}

    \smallskip
    \noindent\textbf{Step 2: Cross term.}
    Notice that the Euclidean norm of the cross-term multiplier decomposes as: $|(b^\top P A\otimes I_d)Z_n| = \|A^\top P b\|_2 \, |Z_n|$. We bound the cross term using Cauchy-Schwarz and the property $|Z_n|^2 \le \frac{1}{\lambda_{\min}(P)} Z_n^\top (P \otimes I_d) Z_n$. By Young's inequality, for any $\varepsilon>0$:
    \begin{align*}
        2\lambda\Big\langle (b^\top P A\otimes I_d)Z_n,\ x^\alpha[\rho_n]-x^\star\Big\rangle
        &\le 2\lambda \|A^\top P b\|_2 \, |Z_n| \, |x^\alpha[\rho_n]-x^\star| \\
        &\le \varepsilon \lambda_{\min}(P) |Z_n|^2
        +\frac{\lambda^2}{\varepsilon \lambda_{\min}(P)}\, \|A^\top P b\|_2^2\,|x^\alpha[\rho_n]-x^\star|^2.
    \end{align*}
    Taking expectations yields a bound relative to our error functional:
    \begin{equation}\label{eq:cross_bound-1}
        2\lambda\,\E\Big[\big\langle (b^\top P A\otimes I_d)Z_n,\ x^\alpha[\rho_n]-x^\star\big\rangle\Big]
        \le \varepsilon\,\err(\overline X_n,\overline V_n)+\frac{\lambda^2}{\varepsilon \lambda_{\min}(P)}\,\|A^\top P b\|_2^2\,|x^\alpha[\rho_n]-x^\star|^2.
    \end{equation}

    \smallskip
    \noindent\textbf{Step 3: Noise term.}
    Using the triangle inequality $|x^\alpha[\rho_n]-\overline X_n|=|(x^\alpha[\rho_n]-x^\star)-e_n|\le |x^\alpha[\rho_n]-x^\star|+|e_n|$ and $(x+y+z)^2\le 3(x^2+y^2+z^2)$, we have:
    \[
    (\sigma_0+|x^\alpha[\rho_n]-\overline X_n|)^2
    \le 3\big(\sigma_0^2+|x^\alpha[\rho_n]-x^\star|^2+|e_n|^2\big).
    \]
    Since the $P$-metric bounds the Euclidean norm, $\E|e_n|^2 \le \E|Z_n|^2 \le \frac{1}{\lambda_{\min}(P)}\err(\overline X_n,\overline V_n)$. Therefore:
    \begin{equation}\label{eq:noise_bound_2-1}
        d\,\sigma^2(b^\top P b)\,\E\big[(\sigma_0+|x^\alpha[\rho_n]-\overline X_n|)^2\big]
        \le \frac{3d\sigma^2(b^\top P b)}{\lambda_{\min}(P)}\err(\overline X_n,\overline V_n)+3d\sigma^2(b^\top P b)\big(\sigma_0^2+|x^\alpha[\rho_n]-x^\star|^2\big).
    \end{equation}

    \smallskip
    \noindent\textbf{Step 4: Final combination.}
    Inserting \eqref{eq:lin_part_bound-1}, \eqref{eq:cross_bound-1}, and \eqref{eq:noise_bound_2-1} into the expansion \eqref{eq:H2-split-1}:
    \begin{align*}
        \err(\overline X_{n+1},\overline V_{n+1})
        &\le \left(1-\mu_0+\varepsilon+\frac{3d\sigma^2(b^\top P b)}{\lambda_{\min}(P)}\right)\err(\overline X_n,\overline V_n)\\
        &\quad
        +\left(\lambda^2(b^\top P b)+\frac{\lambda^2}{\varepsilon \lambda_{\min}(P)}\| A^\top P b \|_2^2
        +3d\sigma^2(b^\top P b)\right)|x^\alpha[\rho_n]-x^\star|^2
        +3d\sigma^2(b^\top P b)\sigma_0^2.
    \end{align*}
    We choose the Young's inequality parameter $\varepsilon := \mu_0/4$. Under the small-noise condition \eqref{eq:sigma_small-1}, the noise contribution to the Lyapunov multiplier, $\frac{3d\sigma^2 (b^T P b)}{\lambda_{\min} (P)}$ is bounded by $\mu_0/4$. Thus, the total multiplier is bounded by $1 - \mu_0 + \mu_0/4 + \mu_0/4 = 1 - \mu_0/2$. 
    Setting the decay rate $\mu := \mu_0/2$ and the constant 
    \[
        C_1 := \lambda^2(b^\top P b)+\frac{4\lambda^2}{\mu_0 \lambda_{\min}(P)}\| A^\top P b \|_2^2+3d\sigma^2(b^\top P b)\,,
    \]
    we obtain \eqref{eq:H2-recursion-1} as claimed.
\end{proof}

\begin{remark}[The choice of metric $P$ and the Schur-stable regime]
\label{rmk:metric-choice}
    In defining our functional $\err(X,V)$, we use a positive definite matrix
    $P \succ 0$ constructed via Stein's theorem. A natural simplification would be
    to work directly with the standard Euclidean distance by setting $P=I_2$
    (i.e., $C_H=1$ and $\theta=0$). However, imposing $P=I_2$ restricts the admissible parameter regime. In Step 1
    of the proof, contraction of the deterministic linear part requires the Lyapunov
    dissipation matrix $A^\top P A-P$
    to be strictly negative definite. If one imposes $P=I_2$, this becomes $A^\top A-I_2\prec0\,,$
    which is equivalent to the one-step Euclidean contraction condition $\|A\|_2<1\,.$
    For second-order swarm dynamics, this condition is stronger than Schur stability.
    Indeed, the Schur-stable regime $\varrho(A)<1$ contains parameter values for
    which the transition matrix $A$ is non-normal and satisfies $\|A\|_2>1$, even
    though all eigenvalues of $A$ lie strictly inside the unit disk. To see where
    oscillations enter, recall that the characteristic polynomial of the unit-step
    transition matrix is
    \[
        p_A(z)
        =
        z^2-\operatorname{tr}(A)z+\det(A),
        \qquad
        \operatorname{tr}(A)=2-\gamma-\lambda,
        \quad
        \det(A)=1-\gamma .
    \]
    Hence its discriminant is
    \[
        \operatorname{tr}(A)^2-4\det(A)
        =
        (2-\gamma-\lambda)^2-4(1-\gamma)
        =
        (\gamma+\lambda)^2-4\lambda .
    \]
    Whenever this quantity is negative, the eigenvalues of $A$ form a complex
    conjugate pair, and the dynamics converge in an oscillatory way.
    This is the discrete-time counterpart of the underdamped regime for the
    continuous-time damped oscillator
    \[
        \ddot X(t)+\gamma\dot X(t)+\lambda(X(t)-x^\star)=0,
    \]
    whose characteristic polynomial $s^2+\gamma s+\lambda$ has complex conjugate
    roots when $\gamma^2<4\lambda$. In this oscillatory regime, the Euclidean distance to the minimizer may increase
    transiently as particles spiral and exchange kinetic and potential energy.
    Therefore, an iterative contraction argument based on $P=I_2$ breaks down.
    By contrast, Stein's theorem provides a quadratic form adapted to the discrete
    map, allowing us to capture the Schur-stable regime. 
\end{remark}

\begin{remark}[Explicit formula for $q$]
Solving the Stein equation $A^\top \widetilde P A - \widetilde P = -I_2$ yields
\[
    q = \frac{1}{\widetilde P_{11}}
    = \frac{\gamma^5 \lambda\,(2\gamma + \lambda - 4)}{D(\gamma, \lambda)},
\]
where $D(\gamma, \lambda)$ is a polynomial of degree $7$ in $(\gamma, \lambda)$. The numerator contains the factor $2\gamma + \lambda - 4$, so $q \to 0$ as $(\gamma, \lambda)$ approaches the Schur boundary $\lambda = 4 - 2\gamma$. This is consistent with \cref{fig:decay-rate-heatmap}, where the boundary that plots the contraction rate $\mu_0$ degenerates.
\end{remark}

\subsection{Energy decay in the small-step regime}
\label{subsec:h2-small-dt}
While \cref{prop:h2_decay_dt1} treats the algorithmically relevant unit-step regime, it is also useful to keep the time-step parameter visible in order to compare the discrete dynamics with the continuous-time second-order PSO/CBO model that motivates the construction. The deterministic part of the $\Delta t$-dependent update is a first-order discretization of the linearized second-order ODE
\begin{equation}
    \ddot{X}(t) + \gamma \dot{X}(t) + \lambda\bigl(X(t)-x^\star\bigr)=0.
\end{equation}
At the continuous-time level, the linearized flow is stable for every $\gamma>0$ and $\lambda>0$; the condition $\gamma^2<4\lambda$ corresponds to the underdamped regime in which convergence is oscillatory. By contrast, at the unit-step scale the relevant object is the discrete map, whose Schur stability is equivalent to the stricter condition \eqref{eq: jury-condition-forA} used in \cref{prop:h2_decay_dt1}. These additional restrictions are therefore from the discretiazation.

The purpose of this subsection is to show that the tilted-metric construction is consistent with the continuous-time stability mechanism in the vanishing step-size limit. To make this precise, the deterministic linear transition matrix at step size $\Delta t > 0$ takes the form
\begin{align}
\label{eq:matrix-A-small}
    A(\Delta t) = \begin{pmatrix} 
    1 - \frac{\lambda}{\gamma}\Delta t & \Delta t(1-\gamma\Delta t) \\ 
    -\left(\frac{\lambda}{\gamma}\right)^2 \Delta t & (1-\gamma\Delta t)\left(1+\frac{\lambda}{\gamma}\Delta t\right) 
    \end{pmatrix},
\end{align}
which reduces to the unit-step matrix $A$ of \cref{prop:h2_decay_dt1} when $\Delta t = 1$. For a generic positive definite metric
$
    P = \begin{pmatrix} 1 & \theta \\ \theta & C_H \end{pmatrix} \succ 0,
$
\cref{lem: dissipation-expansion} gives the Taylor expansion
\begin{equation}
\label{eq:A-Taylor}
    A(\Delta t)^\top P A(\Delta t) = P - \Delta t\, Q_0 + O(\Delta t^2),
\end{equation}
where the infinitesimal dissipation matrix $Q_0$ is independent of $\Delta t$. As derived in \cref{lem: dissipation-expansion}, $Q_0$ is symmetric positive definite, $Q_0 \succ 0$, for every $\lambda,\gamma>0$ provided that $C_H$ and $\theta$ satisfy
\begin{equation}
\label{eq:continuous-conditions}
    \theta = \frac{1}{\gamma} - \frac{C_H\lambda^2}{\gamma^3} 
    \quad \text{and} \quad 
    \frac{1}{\gamma^2 - \lambda + \lambda^2/\gamma^2} < C_H < \frac{\gamma^4}{\lambda^3} + \frac{\gamma^2}{\lambda^2}.
\end{equation}
Note that by \cref{lem: dissipation-expansion}, the interval in \eqref{eq:continuous-conditions} is nonempty for every $\lambda,\gamma>0$. Once $Q_0 \succ 0$, the remainder in \eqref{eq:A-Taylor} is dominated for all sufficiently small $\Delta t$, yielding discrete-time dissipation at rate $O(\Delta t)$ for the linearized part. After the same small-noise absorption used in the unit-step argument, this gives the energy recursion stated in \cref{prop: h2-decay-small-dt} below. In this sense, the small-step result in \cref{prop: h2-decay-small-dt} complements the unit-step Schur analysis by showing that our discrete Lyapunov construction recovers this continuous-time dissipation mechanism as $\Delta t \to 0$.

\begin{remark}[Distinction from the unit-step Stein matrix]
\label{rmk:small-step-metric-choice}
    The matrix used in this subsection and the one in \cref{prop:h2_decay_dt1} for the unit-step map are not identical. here, we use an abuse of notation for notational consistency. To be more specific, in the unit-step argument, $\widetilde P$ is uniquely determined as the solution of the Stein equation
    \[
        A^\top \widetilde P A - \widetilde P = -I_2
    \]
    with $A = A(1)$, and the entries $C_H$ and $\theta$ in the normalized matrix $P = \widetilde P / \widetilde P_{11}$ are then computed quantities. Here, by contrast, the same parametrization
    $
        P = \begin{pmatrix} 1 & \theta \\ \theta & C_H \end{pmatrix}
    $
    is used in \cref{prop: h2-decay-small-dt} fir snakk $\Delta t$, and there, $C_H, \theta$ are \emph{free parameters} chosen so that the leading-order dissipation matrix $Q_0$ in the expansion
    \[
        A(\Delta t)^\top P A(\Delta t) = P - \Delta t\, Q_0 + O(\Delta t^2)
    \]
    is positive definite. The admissible region \eqref{eq:continuous-conditions} typically defines a one-parameter family of such metrics rather than a unique solution. Consequently, although the same notation $C_H, \theta$ is used in both subsections for convenience of notation, the values of these parameters in the small-step regime need not coincide with those obtained from \cref{prop:h2_decay_dt1}.
\end{remark}

\begin{proposition}[Energy Functional decay with small $\Delta t$]\label{prop: h2-decay-small-dt}
    Let $\lambda > 0$ and $\gamma > 0$. Choose $C_H$ and $\theta$ satisfying the continuous-time stability conditions \eqref{eq:continuous-conditions}, such that $P = \begin{pmatrix} 1 & \theta \\ \theta & C_H \end{pmatrix} \succ 0$ and $Q_0 \succ 0$.
    
    Then, for the error functional $\err(\overline X_n, \overline V_n)$ defined via $P$, there exist constants $\mu_0 > 0, \Delta t_0>0$ and $K < \infty$ such that for sufficiently small noise $\sigma$ and every \(0<\Delta t\leq \Delta t_0\),
    \begin{equation}\label{eq:H2-lemma-small-dt-form}
        \err(\overline X_{n+1},\overline V_{n+1})\;\le\;(1-\mu_0\Delta t)\,\err(\overline X_n, \overline V_n)
        \;+\;K\Delta t\Big(\sigma_0^2 + \E\big|x^\alpha[\rho_n]-x^\star\big|^2\Big).
    \end{equation}
\end{proposition}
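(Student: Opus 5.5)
We follow the proof of \cref{prop:h2_decay_dt1}, keeping track of the powers of $\Delta t$. The first task is to write the $\Delta t$-dependent mean-field update in the error variables $e_n=\overline X_n-x^\star$ and $u_n=\overline V_n+\frac{\lambda}{\gamma}e_n$. The update is
\[
\overline V_{n+1}=(1-\gamma\Delta t)\overline V_n+\lambda\Delta t\,(x^\alpha[\rho_n]-\overline X_n)+\sigma\sqrt{\Delta t}\,(\sigma_0+|x^\alpha[\rho_n]-\overline X_n|)\xi_n,
\qquad
\overline X_{n+1}=\overline X_n+\Delta t\,\overline V_{n+1}.
\]
With $Z_n=(e_n,u_n)$ this should give
\[
Z_{n+1}=(A(\Delta t)\otimes I_d)Z_n+\lambda\Delta t\,(b(\Delta t)\otimes I_d)(x^\alpha[\rho_n]-x^\star)+\sigma\sqrt{\Delta t}\,(\sigma_0+|x^\alpha[\rho_n]-\overline X_n|)(b(\Delta t)\otimes I_d)\xi_n,
\]
where $b(\Delta t)=\binom{\Delta t}{1+\frac{\lambda}{\gamma}\Delta t}$ is bounded uniformly for $\Delta t\le1$. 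The key bookkeeping is the following:
\begin{itemize}
\item the drift forcing carries a factor $\Delta t$;
\item the noise variance carries a factor $\Delta t$;
\item the cross term therefore carries $\Delta t$;
\item the squared drift forcing carries $\Delta t^2$.
\end{itemize}
As before, we expand $\E[Z_{n+1}^\top(P\otimes I_d)Z_{n+1}]$ with $\mathcal F_n=\sigma(\xi_0,\dots,\xi_{n-1})$. The noise cross terms vanish, leaving four pieces: the linear part, $\lambda^2\Delta t^2\, b^\top Pb\,|x^\alpha-x^\star|^2$, the cross term, and $d\sigma^2\Delta t\,b^\top Pb\,\E(\sigma_0+|x^\alpha-\overline X_n|)^2$.

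\textbf{Linear part (main obstacle).} We use \eqref{eq:A-Taylor} from \cref{lem: dissipation-expansion}. Since $A(\Delta t)$ is polynomial in $\Delta t$, the remainder $R(\Delta t)=A^\top PA-P+\Delta t Q_0$ is a matrix polynomial with no constant or linear term. Hence $\|R(\Delta t)\|\le C_R\Delta t^2$ for $\Delta t\le1$, with $C_R$ depending only on $\lambda,\gamma,C_H,\theta$. We then choose $\Delta t_0\le\min\{1,\lambda_{\min}(Q_0)/(2C_R)\}$, so that
\[
A(\Delta t)^\top PA(\Delta t)\preceq P-\tfrac{\Delta t}{2}\lambda_{\min}(Q_0)I_2\preceq\Bigl(1-\Delta t\,\tfrac{\lambda_{\min}(Q_0)}{2\lambda_{\max}(P)}\Bigr)P .
\]
This gives a base rate $\nu:=\lambda_{\min}(Q_0)/(2\lambda_{\max}(P))$. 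Making the $O(\Delta t^2)$ remainder quantitative and uniform is the only genuinely new point. We would compute $A(\Delta t)^\top PA(\Delta t)$ explicitly: it is a polynomial of degree at most $4$ in $\Delta t$, so its coefficients bound $C_R$.

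\textbf{Remaining terms.} For the cross term $2\lambda\Delta t\,\E\langle (b^\top PA\otimes I_d)Z_n,x^\alpha-x^\star\rangle$, we apply Young's inequality with weight $\frac{\nu}{4}\Delta t$. This costs $\frac{\nu}{4}\Delta t\,\err$ plus $\Delta t\,\frac{4\lambda^2\sup_{\Delta t\le1}\|A^\top Pb\|^2}{\nu\lambda_{\min}(P)}|x^\alpha-x^\star|^2$. The squared forcing is bounded by $\Delta t\,\lambda^2 b^\top Pb\,|x^\alpha-x^\star|^2$, using $\Delta t\le1$. For the noise, as in Step~3 we use $(\sigma_0+|x^\alpha-\overline X_n|)^2\le3(\sigma_0^2+|x^\alpha-x^\star|^2+|e_n|^2)$ and $\E|e_n|^2\le\err/\lambda_{\min}(P)$. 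This yields $\Delta t\,\frac{3d\sigma^2\sup b^\top Pb}{\lambda_{\min}(P)}\err$ plus $\Delta t\cdot3d\sigma^2\sup b^\top Pb\,(\sigma_0^2+|x^\alpha-x^\star|^2)$. The small-noise requirement is $3d\sigma^2\sup_{\Delta t\le1}b^\top Pb/\lambda_{\min}(P)\le\nu/4$.

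\textbf{Conclusion.} Collecting terms, the multiplier of $\err$ is at most $1-\nu\Delta t+\frac{\nu}{4}\Delta t+\frac{\nu}{4}\Delta t=1-\frac{\nu}{2}\Delta t$. We therefore set $\mu_0:=\nu/2$. The constant $K$ is the sum of the three coefficients of $|x^\alpha-x^\star|^2$ and $\sigma_0^2$, all of which are independent of $\Delta t$. Since $x^\alpha[\rho_n]$ is deterministic, writing $\E|x^\alpha-x^\star|^2$ is harmless, and this yields \eqref{eq:H2-lemma-small-dt-form}.
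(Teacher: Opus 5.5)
Your proposal is correct and follows essentially the same route as the paper's proof in the appendix. Both arguments use the same $Z_n$ recursion with $b_{\Delta t}=\binom{\Delta t}{1+\frac{\lambda}{\gamma}\Delta t}$, dominate the $O(\Delta t^2)$ remainder in $A(\Delta t)^\top P A(\Delta t)=P-\Delta t\,Q_0+O(\Delta t^2)$ to obtain the rate $\lambda_{\min}(Q_0)/(2\lambda_{\max}(P))$, apply Young's inequality to the cross term with a weight proportional to $\Delta t$, and absorb the noise under a small-$\sigma$ condition so that $\mu_0$ is half the base rate; your explicit constant $C_R$ and your small-noise condition taken as a supremum over $\Delta t\le 1$ make the uniformity in $\Delta t$ slightly more transparent, but this is a refinement rather than a different argument.
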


\begin{proof}
    Proof can be found in \cref{pf: h2-small-dt}.
\end{proof}



\subsection{Numerical validation of contractivity and stability}\label{sec: numerics}

\begin{figure}[t]
\centering

\newcommand{\panelwidth}{6cm}
\newcommand{\panelheight}{5.3cm}

\begin{subfigure}[t]{0.48\textwidth}
\centering
\begin{tikzpicture}
\begin{axis}[
    width=\panelwidth,
    height=\panelheight,
    scale only axis,
    xmin=0, xmax=2.5,
    ymin=0, ymax=5,
    axis lines=box,
    xlabel={$\gamma$ (friction)},
    ylabel={$\lambda$ (attraction)},
    xtick={0,1,2},
    ytick={0,1,2,3,4,5},
    clip=true
]

\node at (axis cs:0.35,0.5) {STABLE};
\node at (axis cs:2,4.5) {UNSTABLE};


\addplot[very thick, solid, domain=0:2, samples=2]
    {(4 - 2*x*1.0)/(1.0^2)}
    node[pos=0.55, sloped, below] {$\Delta t=1$};

\pgfmathsetmacro{\dt}{0.8}
\addplot[very thick, dashed, domain=0:2.5, samples=2]
    {(4 - 2*x*\dt)/(\dt^2)}
    node[pos=0.55, sloped, below] {$\Delta t=\pgfmathprintnumber{\dt}$};

\pgfmathsetmacro{\ddt}{0.7}
\addplot[very thick, dotted, domain=0:2.5, samples=2]
    {(4 - 2*x*\ddt)/(\ddt^2)}
    node[pos=0.65, sloped, below] {$\Delta t=\pgfmathprintnumber{\ddt}$};

\addplot[very thick, dotted, domain=0:2.5, samples=2]
    {(4 - 2*x*0.1)/(0.1^2)}
    node[pos=0.5, sloped, above] {$\Delta t=0.1$};

\end{axis}
\end{tikzpicture}
\caption{Stability boundaries.}
\label{fig:stability-dt-single}
\end{subfigure}
\hfill
\begin{subfigure}[t]{0.48\textwidth}
\centering
\begin{tikzpicture}
\begin{axis}[
    width=\panelwidth,
    height=\panelheight,
    scale only axis,
    xmin=0, xmax=2.1,
    ymin=0, ymax=4.1,
    axis lines=box,
    xlabel={$\gamma$ (friction)},
    ylabel={$\lambda$ (attraction)},
    xtick={0,0.5,1,1.5,2},
    ytick={0,1,2,3,4},
    clip=true,
    colormap/viridis,
    point meta min=0,
    point meta max=0.75,
    colorbar,
    colorbar style={
        ylabel={$\mu_0 = q/ \lambda_{\max}(P)$}
    }
]

\addplot[
    matrix plot*,
    mesh/cols=61,
    point meta=explicit,
    draw=none,
    unbounded coords=discard
] table[
    x=gamma,
    y=lambda,
    meta=mu
] {Numerics/decay_rate_table.dat};


\end{axis}
\end{tikzpicture} 
\caption{Decay rate ($\Delta t = 1$).}
\label{fig:decay-rate-heatmap}
\end{subfigure} \hfill

\caption{
Stability and deterministic contraction properties in the $(\gamma,\lambda)$-plane.
Plot \subref{fig:stability-dt-single} shows the analytic stability boundaries
$    \lambda=(4-2\gamma\Delta t)/\Delta t^2$
for selected values of $\Delta t$. The stable region lies below the corresponding boundary, while the unstable region lies above it. 
Plot \subref{fig:decay-rate-heatmap} shows the Stein deterministic contraction rate
$\mu_0=q/\lambda_{\max}(P)$ over the Schur-stable triangle for $\Delta t=1$.
Here $P=\widetilde P/\widetilde P_{11}$, where
$\widetilde P$ solves
$A^\top \widetilde P A-\widetilde P=-I_2$, and
$q=1/\widetilde P_{11}$.
}
\label{fig:stability-and-decay}
\end{figure}
In this section, we illustrate the stability analysis from \cref{subsec:h2-unit-dt} and \cref{subsec:h2-small-dt} with numerics.  

\paragraph{Discrete versus continuous stability ranges.}
\Cref{fig:stability-dt-single} shows the Schur-stable region of the linearized transition matrix $A(\Delta t)$ defined in \eqref{eq:matrix-A-small} for
\[
    \Delta t \in \{0.7,\,0.8,\,1\}.
\]
For smaller step sizes, the Schur-stable region expands substantially in the original $(\gamma,\lambda)$-coordinates illustrating that it approahces the continuous time stability regime $\gamma > 0$, $\lambda > 0$ as $\Delta t \to 0$. For the algorithmically relevant unit step $\Delta t = 1$, the discrete map imposes the bounded Schur triangle
$ 0 < \gamma < 2,$ $0 < \lambda < 4 - 2\gamma$, as derived in \cref{prop:h2_decay_dt1}.

\begin{figure}[t]
    \centering
    \includegraphics[width=\textwidth]{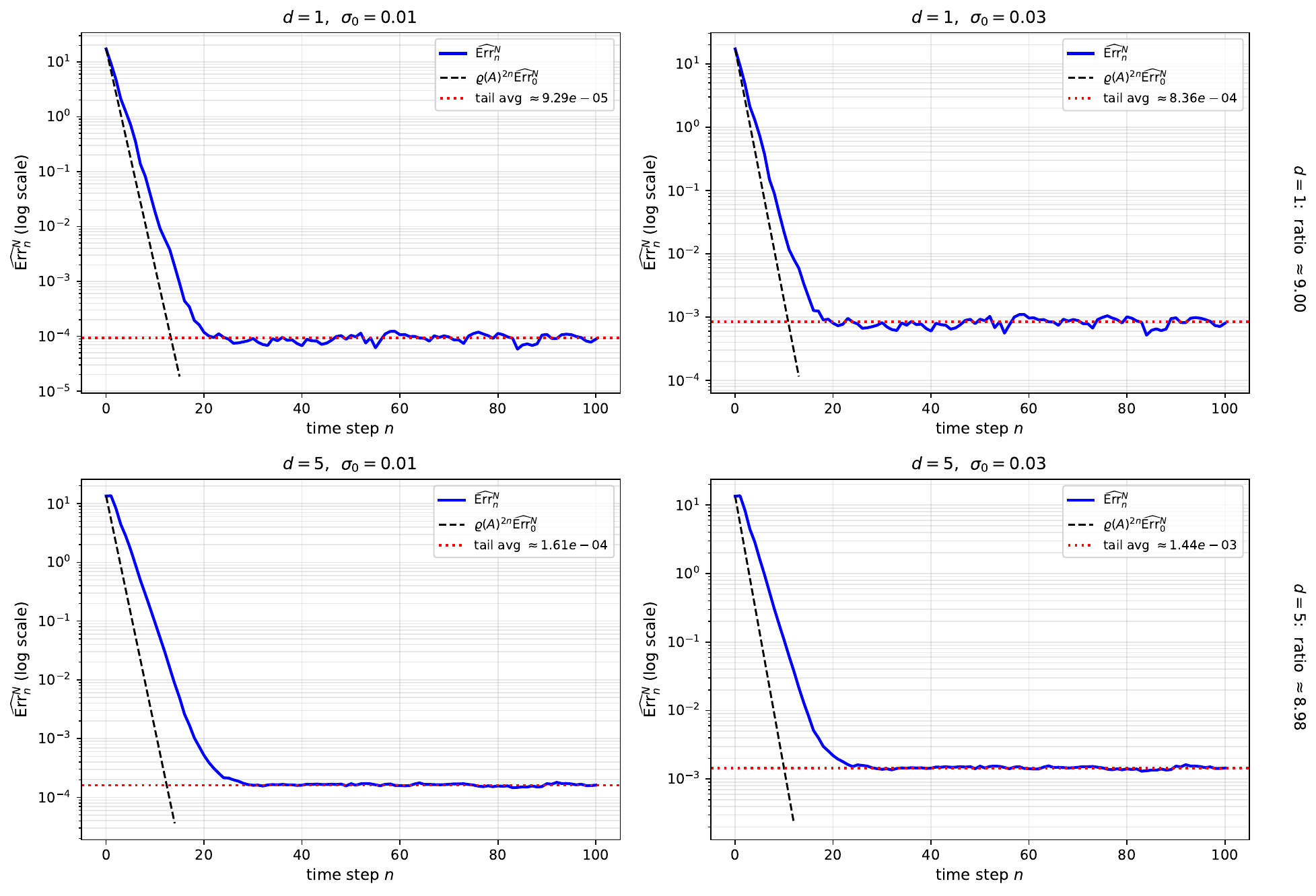}
    \caption{Empirical tilted-metric energy $\widehat{\mathrm{Err}}_n^N$
    along finite-particle simulations of \eqref{eq:2nd-cbo-td-particle} on
    the Ackley benchmark. \textbf{Top row:} dimension $d = 1$. \textbf{Bottom row:}
    dimension $d = 5$. \textbf{Left column:} noise floor $\sigma_0 = 0.01$.
    \textbf{Right column:} noise floor $\sigma_0 = 0.03$. In each panel the
    dashed black line is the deterministic reference slope
    $\varrho(A)^{2n}\widehat{\mathrm{Err}}_0^N$ and the red dotted line is
    the empirical tail average over the last $30$ time steps. The ratio of
    the two tail averages within each row is $\approx 9.00$ ($d = 1$) and
    $\approx 8.98$ ($d = 5$), in close agreement with the $\mathcal{O}(\sigma_0^2)$
    remainter term predicted by \cref{prop:h2_decay_dt1}. Parameter values
    are listed in \cref{app:parameters}.}
    \label{fig:log-energy-decay}
\end{figure}

\paragraph{Energy decay in the tilted $P$-metric.}
We now illustrate the geometric decay of the empirical tilted-metric energy
along trajectories of the finite-particle stochastic dynamics
\eqref{eq:2nd-cbo-td-particle}. For each particle $i = 1,\dots,N$ we work
in the shifted coordinates
\[
    Z_n^i := \begin{pmatrix} e_n^i \\ u_n^i \end{pmatrix} \in \R^{2d},
    \qquad
    e_n^i := X_n^i - x^\star,
    \qquad
    u_n^i := V_n^i + \frac{\lambda}{\gamma} e_n^i,
\]
introduced in the proof of \cref{prop:h2_decay_dt1}, and consider the empirical analogue of the energy functional \eqref{eq:err_functional},
\[
    \widehat{\mathrm{Err}}_n^N
    := \frac{1}{N} \sum_{i=1}^N (Z_n^i)^\top (P \otimes I_d)\, Z_n^i.
\]
 
\Cref{fig:log-energy-decay} reports $\widehat{\mathrm{Err}}_n^N$ for two
dimensions $d \in \{1, 5\}$ and two values of the noise floor
$\sigma_0 \in \{0.01,\,0.03\}$, with all other parameters fixed within each
dimension. In all four panels the empirical energy decays geometrically
along the deterministic reference slope
$\varrho(A)^{2n}\widehat{\mathrm{Err}}_0^N$ until it saturates at a level
determined by the non-degenerate noise. The figure isolates the role of the noise floor across two dimensions: the
deterministic decay rate which depends only on Stein metric $P$, spectral radius $\varrho(A)$,
initial state distribution is fixed within each row, so the only effect
of varying $\sigma_0$ is to change the level at which the geometric decay
is affected by the noise.

\section{Global convergence via the Laplace principle}\label{sec: convergence}
The goal of this section is to close the analytical loop using a bootstrap induction argument. The challenge lies in the fact that the contractivity of the swarm depends on the accuracy of the consensus point, but the accuracy of the consensus point depends on the concentration of the swarm. We break this circular dependence in two steps. First, we prove a mass concentration lower bound, demonstrating that the non-degenerate multiplicative noise guarantees a strictly positive fraction of particles remains near $x^\star$ at all times. Second, after bounding the mass, we use the quantitative Laplace principle to prove that the consensus point $x^\alpha[\rho_n]$ becomes exponentially close to $x^\star$. By recursively applying these bounds, we show that the the residual term remains controlled, allowing the geometric contraction from \cref{sec: contractivity} to drive the swarm to the global minimizer.

We study the long-time behavior of the mean-field system \eqref{eq:2nd-cbo-td-mf} to understand under which assumptions it converges to a global solution to \eqref{eq:problem}. The analysis is based on the application of the quantitative Laplace principle \cite[Proposition 4.5]{fornasier2024convergence} and on showing that the dynamics is contractive. First, we state our assumptions on the objective function.
\begin{assumption} \label{asm:inverse}
The objective function $\calE \in C(\R^d)$ satisfies:
\begin{enumerate}
\item there exists  a unique $x^\star$ such that $\calE(x^\star) = \inf_{x \in \R^d} \calE(x)$;
\item there exists $\calE_{\infty}, R_0, \eta,\nu>0$ such that 
\begin{align}
    |x - x^\star| &\leq \frac1\eta \left(\calE(x) - \inf \calE \right)^{\nu} \qquad \textup{for all}\;\; x\in B(x^\star, R_0) \\
    \calE(x) - \inf \calE &> \calE_{\infty} \qquad \textup{for all}\;\; x\in \R^d \setminus B(x^\star, R_0)\,.
\end{align}
\end{enumerate}
\end{assumption}

\subsection{Application of quantitative Laplace principle}

As the parameter $\alpha$ used to compute the consensus point $x^\alpha[\rho]$ \eqref{eq:consensuspoint} increases, we can expect it to convergence towards the global minimum $x^\star$, provided it belongs to the support of $\rho$. Under \cref{asm:inverse}, it was derived in \cite{fornasier2024convergence} a quantitative convergence rate.

\begin{proposition}[{\cite[Proposition 4.5]{fornasier2024convergence}}] \label{prop:laplace}
    Let $\rho \in \mathcal{P}(\R^d)$ and fix $\alpha>0$. For any $e>0$, define $\calE_e := \sup_{x\in B(x^\star,e)} \calE(x)$, then, under the inverse continuity property \cref{asm:inverse} and assuming w.l.o.g. $\inf \calE = 0$, for any $r\in (0, R_0]$ and $q>0$ such that $q + \calE_r < \calE_{\infty}$, we have
    \begin{equation}
        |x^\alpha[\rho] - x^\star | \leq \frac{(q + \calE_r)^\nu}{\eta} + \frac{\exp(-\alpha q)}{\rho(B(x^\star,r))}\int |x - x^\star|\rho(d x)\,.
    \end{equation}
\end{proposition}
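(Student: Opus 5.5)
The plan is to split the consensus point's deviation from $x^\star$ into a contribution from near-optimal points and a tail contribution that the exponential weights suppress. Since $x^\alpha[\rho]-x^\star = \int (x-x^\star)\,\omega_\alpha(x)\,\rho(dx)/\|\omega_\alpha\|_{L^1(\rho)}$ with $\omega_\alpha = e^{-\alpha\calE}$, the triangle inequality gives
\[
|x^\alpha[\rho]-x^\star| \le \int |x-x^\star|\frac{\omega_\alpha(x)}{\|\omega_\alpha\|_{L^1(\rho)}}\rho(dx).
\]
Set $\tilde r := (q+\calE_r)^\nu/\eta$. We then split the integral over $B(x^\star,\tilde r)$ and its complement. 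On the ball, $|x-x^\star|\le\tilde r$, and the normalized weights integrate to at most one, so this piece is bounded by $\tilde r$. This is exactly the first term of the claimed estimate.

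For the complement, we first want to show that every $x\notin B(x^\star,\tilde r)$ satisfies $\calE(x) > q+\calE_r$. There are two cases.
\begin{itemize}
\item If $x\in B(x^\star,R_0)$, the inverse continuity in \cref{asm:inverse} gives $\tilde r\le |x-x^\star|\le \calE(x)^\nu/\eta$. Hence $\calE(x)\ge q+\calE_r$, with equality only on a boundary case that is harmless, for instance by using an open/closed ball convention.
\item If $|x-x^\star|\ge R_0$, then $\calE(x)>\calE_\infty>q+\calE_r$ by the hypothesis $q+\calE_r<\calE_\infty$.
\end{itemize}
In both cases $\omega_\alpha(x)\le e^{-\alpha(q+\calE_r)}$ on the complement.

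Next comes the lower bound on the normalization. Since $\calE\le\calE_r$ on $B(x^\star,r)$ by definition of $\calE_r$,
\[
\|\omega_\alpha\|_{L^1(\rho)} \ge \int_{B(x^\star,r)} e^{-\alpha\calE}\,d\rho \ge e^{-\alpha\calE_r}\rho(B(x^\star,r)).
\]
Combining this with the pointwise bound on the complement, the tail contribution is at most
\[
\frac{e^{-\alpha(q+\calE_r)}}{e^{-\alpha\calE_r}\rho(B(x^\star,r))}\int|x-x^\star|\,\rho(dx) = \frac{e^{-\alpha q}}{\rho(B(x^\star,r))}\int|x-x^\star|\,\rho(dx),
\]
which is the second term of the claimed estimate.

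The only delicate step is the case analysis showing that points outside $B(x^\star,\tilde r)$ have energy above $q+\calE_r$. It relies on combining the local inverse continuity inside $B(x^\star,R_0)$ with the far-field gap $\calE_\infty$. The condition $q+\calE_r<\calE_\infty$ makes the two regimes match; if $\tilde r>R_0$, only the second case is needed. We also note that if $\rho(B(x^\star,r))=0$, the bound holds trivially.
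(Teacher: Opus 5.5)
Your proof is correct and is essentially the argument behind the cited result; the paper does not prove this statement itself and refers to \cite[Proposition 4.5]{fornasier2024convergence}. That argument has the same steps as yours: the triangle (Jensen) inequality, a split at $\tilde r=(q+\calE_r)^\nu/\eta$, the lower bound $\int e^{-\alpha\calE}\,d\rho\ge e^{-\alpha\calE_r}\rho(B(x^\star,r))$, and the two-case estimate $\inf_{|x-x^\star|\ge\tilde r}\calE(x)\ge\min\{\calE_\infty,(\eta\tilde r)^{1/\nu}\}=q+\calE_r$. One cosmetic point: the tail bound only needs the non-strict inequality $\calE(x)\ge q+\calE_r$ off $B(x^\star,\tilde r)$, so you do not need to invoke any open/closed ball convention for the equality case.
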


Note that the bound is finite provided $\rho(B(x^\star, r))>0$, that is, $x^\star$ belongs to the support of $\rho$. Moreover, to iteratively apply this bound for $\rho_n$ we need to provide an estimate on the mass around the solution for all iterations $n\geq 1$ until convergence. Using a similar strategy to the one derived in \cite{borghi2025bgk}, we achieve it by leveraging the fact that the noise is non-degenerate in \eqref{eq:2nd-cbo-td-mf}.

\begin{lemma} \label{lem:mass_laplace}
Let $\mu, C_1$ be the constants from \cref{prop:h2_decay_dt1} and \cref{asm:inverse} hold. Assume that the parameters $\gamma, \lambda, \sigma$ satisfy the conditions in \cref{prop:h2_decay_dt1}.  Additionally assume for a given $n\in \mathbb{N}$, $E_0>0$, $\veps\in (0, E_0]$, and $\sigma_0^2 \leq \veps \mu/4 C_1$, it holds 
\begin{equation*}
\begin{dcases}
\err(\OX_n, \OV_n) &\leq E_0 \\
|x^\alpha[\rho_n] - x^\star|^2 &\leq \veps \frac{\mu}{4C_1}\,.
\end{dcases}
\end{equation*}    
Then, there exists $\alpha = \alpha(E_0,\veps)> 0$ sufficiently large such that 
\[|x^\alpha[\rho_{n+1}] - x^\star|^2 \leq \veps \frac{\mu}{4C_1}\,.\]
\end{lemma}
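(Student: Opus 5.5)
The plan is to apply the quantitative Laplace principle, \cref{prop:laplace}, to $\rho_{n+1}=\mathrm{Law}(\OX_{n+1})$. This needs two ingredients: an upper bound on the first moment $\int|x-x^\star|\rho_{n+1}(dx)$, and a strictly positive lower bound on the mass $\rho_{n+1}(B(x^\star,r))$. Both bounds must depend only on $E_0,\veps$ and the fixed parameters, not on $\alpha$. Once they are in hand, we first fix $q,r$ so that the deterministic term $(q+\calE_r)^\nu/\eta$ is at most $\tfrac12\sqrt{\veps\mu/(4C_1)}$. This is possible by continuity of $\calE$ at $x^\star$ (so $\calE_r\to0$ as $r\to0$), taking $q$ small and keeping $q+\calE_r<\calE_\infty$. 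We then choose $\alpha$ large so that $e^{-\alpha q}$ times the ratio (moment)/(mass) is below the same threshold. Squaring the sum of the two halves gives the claim.

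\textbf{Moment bound.} Apply \cref{prop:h2_decay_dt1} at step $n$, together with the hypotheses $\err(\OX_n,\OV_n)\le E_0$, $\sigma_0^2\le \veps\mu/4C_1$ and $|x^\alpha[\rho_n]-x^\star|^2\le\veps\mu/4C_1$. This gives
\[
\err(\OX_{n+1},\OV_{n+1})\le(1-\mu)E_0+\tfrac{\mu}{2}\veps\le E_0 ,
\]
using $\veps\le E_0$. Hence
\[
\int|x-x^\star|\rho_{n+1}(dx)\le \big(\E|e_{n+1}|^2\big)^{1/2}\le \big(E_0/\lambda_{\min}(P)\big)^{1/2}.
\]
As a by-product this also propagates the energy hypothesis to step $n+1$, which the later bootstrap will need.

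\textbf{Mass bound (main obstacle).} Here the noise floor $\sigma_0>0$ enters. Conditionally on $\mathcal F_n$, the update reads
\[
\OX_{n+1}=\OX_n+(1-\gamma)\OV_n+\lambda(x^\alpha[\rho_n]-\OX_n)+\sigma s_n\xi_n,
\]
where $m_n:=\OX_n+(1-\gamma)\OV_n+\lambda(x^\alpha[\rho_n]-\OX_n)$ is $\mathcal F_n$-measurable and $s_n:=\sigma_0+|x^\alpha[\rho_n]-\OX_n|\ge\sigma_0$. So $\OX_{n+1}$ is conditionally Gaussian with mean $m_n$ and isotropic standard deviation $\sigma s_n$. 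Fix a radius $M$. On the event
\[
G=\{|m_n-x^\star|\le M,\ |\OX_n-x^\star|\le M\},
\]
$s_n$ is bounded above by $\sigma_0+M+\sqrt{\veps\mu/4C_1}$ and below by $\sigma_0$. The Gaussian probability of the ball $B(x^\star,r)$ is then at least an explicit constant
\[
c(r,M)\gtrsim \text{vol}(B_r)\,(2\pi\sigma^2 s_{\max}^2)^{-d/2}\exp\!\big(-(M+r)^2/(2\sigma^2\sigma_0^2)\big)>0.
\]
By Markov's inequality, $\proba(G^c)\le C E_0/M^2$, since $|m_n-x^\star|$ is bounded linearly by $|e_n|$, $|u_n|$ and $|x^\alpha[\rho_n]-x^\star|$, all of which are controlled by $E_0$ and $\veps$. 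Choosing $M$ with $\proba(G^c)\le1/2$ yields $\rho_{n+1}(B(x^\star,r))\ge c(r,M)/2=:c_0(E_0,\veps)>0$, uniformly in $\alpha$. The delicate point is exactly this uniformity. The bound must not use any information on $\rho_n$ beyond the energy and the consensus-point hypothesis, which is why $x^\alpha[\rho_n]$ enters only through its bounded distance to $x^\star$.

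\textbf{Conclusion.} Choose $\alpha$ so large that
\[
e^{-\alpha q}\,\frac{(E_0/\lambda_{\min}(P))^{1/2}}{c_0}\le \tfrac12\sqrt{\veps\mu/(4C_1)}.
\]
Then \cref{prop:laplace} gives $|x^\alpha[\rho_{n+1}]-x^\star|\le\sqrt{\veps\mu/(4C_1)}$, as required. Since $q,r,M,c_0$ depend only on $E_0,\veps$ (and the fixed parameters), so does $\alpha$.
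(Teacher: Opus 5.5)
Your proposal is correct and follows essentially the same route as the paper. Both arguments bound the first moment of $\rho_{n+1}$ via \cref{prop:h2_decay_dt1} and Jensen, lower-bound $\rho_{n+1}(B(x^\star,r))$ by a Gaussian estimate on a Markov-localized high-probability event (with $\sigma_0>0$ bounding the conditional variance from below), and then apply the quantitative Laplace principle with $r,q$ fixed first and $\alpha$ chosen last. Your localization event on $|m_n-x^\star|$ and $|\OX_n-x^\star|$, instead of the paper's $|\OX_n|,|\OV_n|\le R$, and your explicit remark that the mass bound is uniform in $\alpha$ are only cosmetic differences.
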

\begin{proof} 
\textbf{Mass around large ball $B(0,R)$.}
We start by estimating the mass at $B(0,R)$ for some large radius $R, R>|x^\star|, R>\sqrt{\veps \mu/(2C_1)}$. Since by Markov's inequality applied to the squared norm we have 
\[\proba\left(|\OX_n| + |\OV_n| > R \right) \leq \proba\left((|\OX_n| + |\OV_n|)^2 > R^2 \right)
\leq \frac1{R^2}\expect\left[(|\OX_n| + |\OV_n|)^2 \right] \leq \frac1{R^2} C(\lambda,\gamma, E_0, |x^\star|)
\]
therefore, for any given $\delta>0$ there exist $R$ sufficiently large such that for $\mathcal{F}_n^R := \{|\OX_n|, |\OV_n|\leq R\}$
\[
\proba(\mathcal{F}_n^R) \geq 
\proba \left(|\OX_n|+ |\OV_n|\leq R \right) > 1 - \delta\,.
\]

\noindent
\textbf{Mass around small ball $B(x^\star, r)$.}
We aim to prove
$
\proba\left(\OX_{n+1}\in B(x^\star,r)\right)\ \ge\ \delta_r> 0\,.
$
From the update rule \eqref{eq:2nd-cbo-td-mf}, we have
\begin{align*}
\OX_{n+1}-x^\star
&= 
\big(\OX_n - x^\star\big) + (1-\gamma)\OV_n
+ \lambda  \big(x^\alpha[\rho_n]-\OX_n\big)
+\sigma \big(\sigma_0 + |x^\alpha[\rho_n]-\OX_n|\big)\,\xi_n \\
&  =: Z_1 + Z_2\xi_n\,,
\end{align*}
so that
$
\{\OX_{n+1}\in B(x^\star,r)\}=\{\,Z_1 + Z_2\xi_n \in B(0,r)\,\}.
$
For any $\omega \in \mathcal F_n^R$, due to the assumption on $x^\alpha[\rho_n]$ and the choice $R$ we have 
\[
|x^\alpha[\rho_n] - \OX_n(\omega)|
 \leq  |x^\alpha[\rho_n] - x^\star| + |x^\star| + | \OX_n(\omega)| \leq 3R\,.
\]
This leads to an upper and lower bound on $Z_2 = \sigma(\sigma_0 + |x^\alpha[\rho_n] - \OX_n(\omega)|)$ for some constants $c_{2,\min}, c_{2,\max}>0$
\begin{equation}\label{eq:c2-bds}
c_{2,\min}:=\sigma\,\sigma_0 \le\ Z_2(\omega)\ \le\ \sigma (\sigma_0+3R)=:c_{2,\max}\qquad \textup{for all}\quad \omega \in \mathcal{F}_n^R\,.
\end{equation}
Moreover, using similar estimates we get for all $\omega \in \mathcal{F}_n^R$
\begin{equation}\label{eq:c1-bd}
\begin{split}
|Z_1(\omega)| &
\le |\OX_n(\omega)-x^\star| + (1-\gamma)|\OV_n(\omega)|
    + \lambda  |x^\alpha[\rho_n]-\OX_n(\omega)| \\
& \le |x^\star| +  \bigl(1+|1-\gamma|+3\lambda\bigr)R 
=: C_R.
\end{split}
\end{equation}

First, we note that 
\[
\proba\left(\OX_{n+1} \in B(x^\star, r)\right) = \proba\left( |\OX_{n+1} - x^\star| \leq r \right) = 
\proba\left( |Z_1 + Z_2 \xi_n| \leq r \right) = \proba\left( Z_1 + Z_2\xi_n \in B(0,r)\right)\,.
\]
Let $\textbf{1}[\cdot]$ be the indicator function. 
We notice that
\begin{align}\label{2.9}
    &\proba\left( Z_1 + Z_2\xi_n \in B(0,r)\right)\geq \proba\left(\mathcal{F}_n^R\cap (Z_1 + Z_2\xi_n \in B(0,r))\right)\notag\\
    &=\expect\big[ 
\textbf{1}[\mathcal{F}_n^R]\textbf{1}[Z_1 + Z_2 \xi_n \in B(0,r)] \big]\notag\\
&=\expect\Big[\expect\big[
\textbf{1}[\mathcal{F}_n^R]\textbf{1}[Z_1 + Z_2 \xi_n \in B(0,r)] |\OX_n,\OV_n\big]\Big] \qquad \mbox{(Law of total expectation)}\notag\\
&=\expect\Big[\textbf{1}[\mathcal{F}_n^R]\expect\left[
\textbf{1}[Z_1 + Z_2 \xi_n \in B(0,r)] |\OX_n,\OV_n\right]\Big]\qquad  \mbox{(}\textbf{1}[\mathcal{F}_n^R] \mbox{ is } \sigma(\OX_n,\OV_n)-\mbox{measurable)}\notag\\
   & =\expect\left[ \textbf{1}[\mathcal{F}_n^R] \proba\left( Z_1(\OX_n,\OV_n) + Z_2(\OX_n,\OV_n)\xi_n \in B(0,r)|\OX_n,\OV_n\right) \right]
\end{align}
and
\begin{equation}\label{2.10}
\proba\left(Z_1 + Z_2\xi_n \in B(0,r)|\OX_n=x,\OV_n=v\right)=\int_{B(0,r)} \frac1{(2\pi)^{d/2} z_2(x,v)^d} \exp\left( - \frac{|z_1(x,v) - z|^2}{2z_2(x,v)^2}
\right) dz\,.
\end{equation}
Then using \eqref{2.9} and \eqref{2.10} we have
\begin{align}
    \proba\Big( &Z_1 + Z_2\xi_n \in B(0,r)\Big)
    \geq \expect\left[ \textbf{1}[\mathcal{F}_n^R] \proba\left( Z_1(\OX_n,\OV_n) + Z_2(\OX_n,\OV_n)\xi_n \in B(0,r)|\OX_n,\OV_n\right) \right]\notag\\
&\geq \iint_{B_R\times B_R}\textbf{1}[\mathcal{F}_n^R](x,v)\int_{B(0,r)} \frac1{(2\pi)^{d/2} z_2(x,v)^d} \exp\left( - \frac{|z_1(x,v) - z|^2}{2z_2(x,v)^2}
\right) dz f^n(dx,dv) \,,
\end{align}
where $f^n$ is the joint distribution of $(\OX_n,\OV_n)$. Recall that when $(x,v)\in B_R\times B_R$, it holds that $|z_1|\leq C_R$ and $c_{2,\min}\leq z_2\leq c_{2,\max}$. This further implies that
\begin{align}
    \proba\left( Z_1 + Z_2\xi_n \in B(0,r)\right)&\geq \frac{|B(0,r)|}{(2\pi)^{d/2} c_{2,\max}^d} \exp\left( - (r+C_R)^2/2c_{2,\min}^2\right)
\iint_{B_R\times B_R}\textbf{1}[\mathcal{F}_n^R](x,v) f^n(dx,dv)\notag\\
&=\proba(\mathcal{F}_n^R) \frac{|B(0,r)|}{(2\pi)^{d/2} c_{2,\max}^d} \exp\left( - (r+C_R)^2/2c_{2,\min}^2
\right)\,.
\end{align}
By using the fact that $\proba(\mathcal{F}_n^R)\geq 1 - \delta$ we finally obtain that $\proba \left(\OX_{n+1}\in B(x^\star,r)\right)\ \ge\ \delta_r> 0$ for some $\delta_r>0$.

\noindent
\textbf{Estimate on $\int |x - x^\star|\rho_{n+1}(dx)$.} By definition of the error functional \eqref{eq:err_functional}, we have $\expect|\OX_{n+1} - x^\star|^2 \leq  \err(\OX_{n+1}, \OV_{n+1})/\lambda_{\min}(P)$. Therefore, by using Jensen's inequality, the contractivity of the dynamics from \cref{prop:h2_decay_dt1}, and the assumption on $|x^\alpha[\rho_n] - x^\star|^2$, $\err(\OX_n, \OV_n)$ and $\veps$, we have
\begin{align*}
\err(\OX_{n+1}, \OV_{n+1}) &\leq (1 - \mu) \err(\OX_{n}, \OV_{n}) + C_1\left( \sigma_0^2 + |x^\alpha[\rho_n]-x^\star|^2 \right) \\
& \leq (1 - \mu) E_0 + C_1\left( \veps \frac{\mu}{4C_1} + \veps \frac{\mu}{4C_1} \right) \\
& \leq (1 - \mu/2) E_0 \leq E_0
\end{align*}
leading to $\int |x - x^\star|\rho_{n+1}(dx) \leq \sqrt{\expect|\OX_{n+1} - x^\star|^2} \leq \sqrt{E_0/\lambda_{\min}(P)}$.

\noindent
\textbf{Application quantitative Laplace principle.}
Next, we take a radius $r\in(0,1]$ and $q>0$ sufficiently small such that (in the notation of \cref{prop:laplace}) it holds
\[
\frac{(q + \calE_r)^{\nu}}\eta < \frac{1}{2}\sqrt{\veps \frac{\mu}{4 C_1}}
\]
By applying \cref{prop:laplace} with such $r$, 
we then have 
\begin{align*}
| x^\alpha[\rho_{n+1}] - x^\star| & \leq  \frac{1}{2}\sqrt{\veps \frac{\mu}{4C_1}} + \frac{\exp(-\alpha q)}{\delta_{r}}\int |x - x^\star|\rho_{n+1}(dx) \\
& \leq  \frac{1}{2}\sqrt{\veps \frac{\mu}{4C_1}} + \frac{\exp(-\alpha q)}{\delta_{r}}\sqrt{\frac{E_0}{\lambda_{\min}(P)}}  \leq \sqrt{\veps \frac{\mu}{4C_1}}
\end{align*}
for $\alpha = \alpha(E_0,\veps)>0$ sufficiently large such that the second term is also bounded by $\frac{1}{2}\sqrt{\veps \mu/(4C_1)}$. Squaring this result yields $|x^\alpha[\rho_{n+1}] - x^\star|^2 \leq \veps \mu/(4C_1)$. 
\end{proof}

\begin{remark} \label{rmk:constants1}
The smallness condition on $\sigma_0$ and the largeness condition on $\alpha$ are linked through the lower bound on the mass near $x^\star$. 
Indeed, from the proof of \cref{lem:mass_laplace}, one obtains that the mass around the minimizer can be bounded as
\[
\delta_r \ge
(1-\delta)\frac{|B(0,r)|}{(2\pi)^{d/2}[\sigma(\sigma_0+3R)]^d}
\exp\!\left(-\frac{(r+C_R)^2}{2\sigma^2\sigma_0^2}\right) \gtrsim \exp\left(- \frac1{\sigma^2_0}\right)
\]
by the definition of constants $c_{2,\min}$ and $c_{2,\max}$.

Hence, in order that the Laplace remainder term satisfies
\[
\frac{e^{-\alpha q}}{\delta_r}\int |x-x^\star|\,\rho_{n+1}(dx)
\le \frac12\sqrt{\varepsilon\frac{\mu}{4C_1}},
\]
it is required to take
\[
\alpha \gtrsim  \frac1q\left( 
-\log(\veps)
+\frac1{\sigma_0^2}
\right)\,.
\]

Let us now  focus on the dependence with respect to the dimension $d$. Recall $|B(0,r)| = \pi^{d/2}r^d/ \Gamma(d/2 + 1)$, 
and that by the Stirling approximation it holds $\Gamma(d/2 + 1) \lesssim \sqrt{d}(\frac{d}{2e})^{d/2}$.
The estimate now becomes 
\begin{align*}
\delta_r \ge
(1-\delta)\frac{1}{2^{d/2}\Gamma(d/2 + 1)} \left(\frac{r}{\sigma(\sigma_0+3R)}\right)^d
\exp\!\left(-\frac{(r+C_R)^2}{2\sigma^2\sigma_0^2}\right)  \gtrsim \frac 1{\sqrt{d}}\left(\frac Cd  \right)^{d/2} 
\end{align*}
for some constant $C>0$. Similarly as above, this leads to a dimension-dependent constraint for $\alpha$ given by
\[
\alpha \gtrsim \frac1q \left( \frac{d}2 \log d - \frac d2 \log C \right)\,.
\]

\end{remark}

\begin{figure}[t]
    \centering
    \includegraphics[width=1\textwidth]{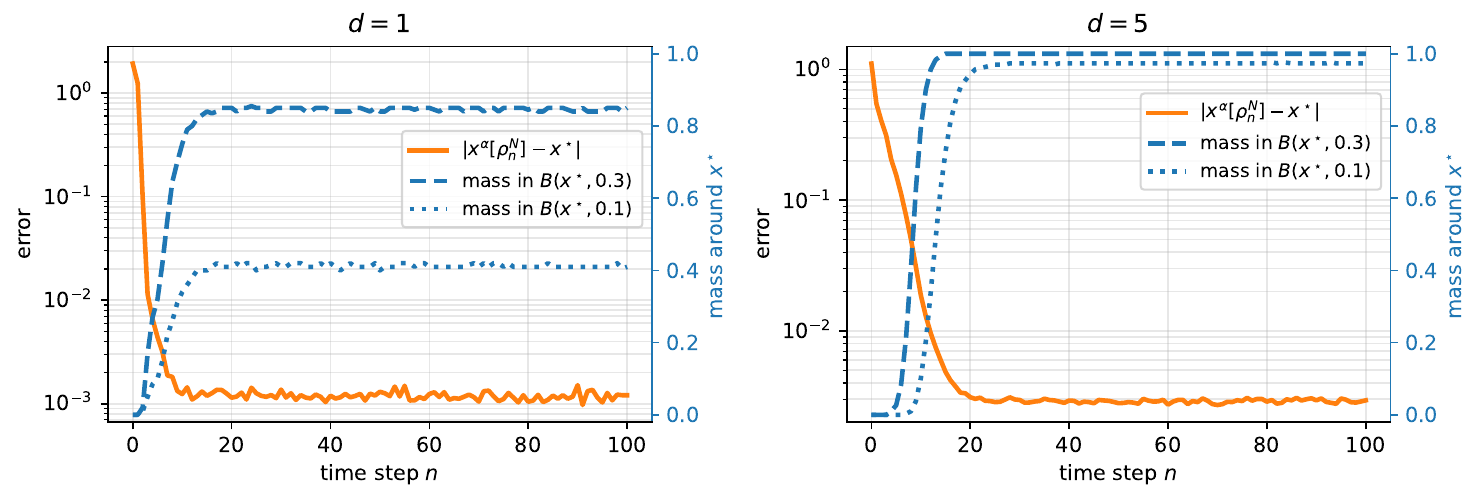}
    \caption{Mass concentration and consensus error for finite-particle simulations of
\eqref{eq:2nd-cbo-td-particle} on the Ackley benchmark. The two panels show
$d=1$ and $d=5$. All curves are pointwise medians over $200$ independent
runs. The orange curve, plotted on the left logarithmic axis, shows the
consensus error $|x^\alpha[\rho_n^N]-x^\star|$. The blue dashed and dotted
curves, plotted on the right axis, show the empirical mass around $x^\star$
for two values, $r = 0.1, 0.3$. The mass remains positive while the consensus error
decays toward a noise floor, as analyzed in the \cref{lem:mass_laplace}. Parameter values are listed in
\cref{app:parameters}.}
    \label{fig:mass-concentration}
\end{figure}

\begin{remark}
\Cref{fig:mass-concentration} illustrates the mass concentration mechanism analyzed in the proof of \cref{lem:mass_laplace}. A finite-particle simulation of
\eqref{eq:2nd-cbo-td-particle} on the Ackley benchmark shows that the
empirical mass near $x^\star$ saturates at a strictly positive level and
the consensus point $x^\alpha[\rho_n^N]$ is correspondingly pulled close
to $x^\star$, across both $d = 1$ and $d = 5$ and across two measurement
radii $r$.
\end{remark}

\subsection{Convergence towards minimizer }

From  \cref{lem:mass_laplace}, we observe that the consensus point remains close to the minimizer as long as the error functional stays bounded. Thanks to the contractivity of the dynamics established in  \cref{prop:h2_decay_dt1}, we also see that the error functional decays as long as the consensus point remains close to the minimizer. To prove convergence, we therefore bootstrap these two arguments to show that the error decays toward a prescribed accuracy.
We conclude the section with a discussion of how the parameters $\alpha$ and $\sigma_0$ depend on the accuracy $\veps$, see  \cref{rmk:constants2}.

  \begin{theorem} \label{t:minimizer}
Let $x^\star \in \supp(\rho_0)$ and $\veps>0$ be fixed. Assume that the conditions on the system parameters imposed on \cref{lem:mass_laplace} all hold. If $\calE$ satisfies \cref{asm:inverse}, there exists
$\alpha=\alpha(\rho_0, \veps)\gg 1$ and $\sigma_0=\sigma_0(\rho_0, \veps)\ll 1$ such that
\[
\err(\OX_n,\OV_n)\le \max\{(1-\mu/2)^n \err(\OX_0,\OV_0),\,\veps\}
\qquad \textup{for all}\quad  n\ge 0.
\]

As a consequence, it holds
\[
\err(\OX_{n}, \OV_{n}) \leq \veps \,\qquad \textup{for all}\quad n\geq n_T := \frac2{\mu} \log\left( \frac{\err(\OX_0, \OV_0)}{\veps}\right) \,.
\]
\end{theorem}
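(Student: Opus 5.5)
The proof is a bootstrap induction on $n$ that combines the one-step recursion of \cref{prop:h2_decay_dt1} with the propagation of consensus-point accuracy in \cref{lem:mass_laplace}.

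\textbf{Setup.} Set $E_0:=\max\{\err(\OX_0,\OV_0),\veps\}$ and work with $\veps\le E_0$, as \cref{lem:mass_laplace} requires. Choose $\sigma_0^2\le \veps\mu/(4C_1)$. Take $\alpha$ large enough that:
\begin{itemize}
\item[(i)] the initial consensus point satisfies $|x^\alpha[\rho_0]-x^\star|^2\le \veps\mu/(4C_1)$;
\item[(ii)] $\alpha\ge\alpha(E_0,\veps)$, the threshold given by \cref{lem:mass_laplace}.
\end{itemize}
For (i), apply \cref{prop:laplace} directly to $\rho_0$. Since $x^\star\in\supp(\rho_0)$, we have $\rho_0(B(x^\star,r))>0$ for every $r>0$. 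The first moment $\int|x-x^\star|\rho_0(dx)$ is finite, being bounded by $\sqrt{\err(\OX_0,\OV_0)/\lambda_{\min}(P)}$. So choosing $r,q$ small and then $\alpha$ large makes the right-hand side at most $\sqrt{\veps\mu/(4C_1)}$. This is exactly where the dependence $\alpha=\alpha(\rho_0,\veps)$ enters.

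\textbf{Inductive hypothesis.} For all $n\ge 0$, assume
\begin{itemize}
\item[(a)] $\err(\OX_n,\OV_n)\le \max\{(1-\mu/2)^n\err(\OX_0,\OV_0),\veps\}=:M_n\le E_0$;
\item[(b)] $|x^\alpha[\rho_n]-x^\star|^2\le \veps\mu/(4C_1)$.
\end{itemize}
The base case $n=0$ is immediate from the setup.

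\textbf{Inductive step.} First, (b) at step $n+1$ follows from \cref{lem:mass_laplace}, since (a) gives $\err(\OX_n,\OV_n)\le E_0$. The important point is that $\alpha$ depends only on $E_0$ and $\veps$, not on $n$, so a single $\alpha$ serves for all steps.

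For (a) at step $n+1$, insert (b) and the bound on $\sigma_0^2$ into \cref{prop:h2_decay_dt1}:
\[
\err(\OX_{n+1},\OV_{n+1})\le (1-\mu)\err(\OX_n,\OV_n)+\tfrac{\mu}{2}\veps .
\]
Now split into two cases.
\begin{itemize}
\item If $\err(\OX_n,\OV_n)\ge\veps$, then $\tfrac{\mu}{2}\veps\le\tfrac{\mu}{2}\err(\OX_n,\OV_n)$, so the bound is at most $(1-\mu/2)\err(\OX_n,\OV_n)\le(1-\mu/2)M_n$. It remains to check $(1-\mu/2)M_n\le M_{n+1}$. If $M_n=(1-\mu/2)^n\err(\OX_0,\OV_0)$ this is equality with the first entry of $M_{n+1}$. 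Otherwise $M_n=\veps$ and $(1-\mu/2)\veps\le\veps\le M_{n+1}$.
\item If $\err(\OX_n,\OV_n)<\veps$, the bound is at most $(1-\mu)\veps+\tfrac{\mu}{2}\veps\le\veps\le M_{n+1}$.
\end{itemize}
This closes the induction.

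\textbf{Consequence.} For $n\ge n_T$,
\[
(1-\mu/2)^n\le e^{-\mu n/2}\le \frac{\veps}{\err(\OX_0,\OV_0)},
\]
so $M_n=\veps$ and hence $\err(\OX_n,\OV_n)\le\veps$.

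\textbf{Main obstacle.} The delicate point is uniformity of $\alpha$ across all $n$. \cref{lem:mass_laplace} yields a mass lower bound $\delta_r$ that depends on $R$ through the Markov estimate, and that estimate must rely only on $E_0$, $|x^\star|$ and the parameters, not on $n$. This holds here because the invariant $\err\le E_0$ is preserved at every step. A second subtlety is that $\sigma_0$ must be fixed before $\alpha$, because $\delta_r$ degrades like $e^{-1/\sigma_0^2}$. The order of choices is therefore $\veps$, then $\sigma_0$, then $r,q$, then $\alpha$.
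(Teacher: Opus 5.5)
Your proposal matches the paper's proof: the same choices $E_0=\max\{\err(\OX_0,\OV_0),\veps\}$ and $\sigma_0^2\le\veps\mu/(4C_1)$, the direct Laplace step at $n=0$ using $x^\star\in\supp(\rho_0)$, a single $\alpha$ above the $n$-independent threshold of \cref{lem:mass_laplace}, and a joint induction on the energy bound and consensus accuracy with the same two-case split. Your explicit check that $(1-\mu/2)M_n\le M_{n+1}$ spells out the iteration of the one-step bound $\err(\OX_{n+1},\OV_{n+1})\le\max\{(1-\mu/2)\err(\OX_n,\OV_n),\veps\}$, which the paper leaves implicit.
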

\begin{proof}
Since $x^\star \in \mathrm{supp}(\rho_0)$, for any radius $r>0$ it holds $\rho_{0}(B(x^\star, r))>0$. Therefore, at $n = 0$ we can directly apply the quantitative Laplace principle \cref{prop:laplace} and state that there exists $\alpha_0 = \alpha_0(\rho_0, \veps)>0$ sufficiently large such that $|x^\alpha[\rho_0] - x^\star|\leq \sqrt{\veps \mu/ (4C_1)}$, where $ C_1$ is the same constant appearing in \cref{prop:h2_decay_dt1} and \cref{lem:mass_laplace}.

Set $\sigma_0^2 := \veps \frac{\mu}{4C_1}$ and let  $E_0:=\max\{\err(\OX_0,\OV_0),\veps\}$. 
By Lemma \ref{lem:mass_laplace}, we can now choose $\alpha=\alpha(E_0,\veps)> \alpha_0$
 so that whenever $\err(\OX_n,\OV_n)\le E_0$ and
$|x^\alpha[\rho_n]-x^\star|^2\le \veps \mu/(4C_1)$, then also
\[
|x^\alpha[\rho_{n+1}]-x^\star|^2\le \veps\frac{\mu}{4C_1}.
\]
We prove \begin{equation}\label{eq:induction_pair}
    \err(\OX_n,\OV_n)\le E_0,
    \qquad
    |x^\alpha[\rho_n]-x^\star|^2\le \veps\frac{\mu}{4C_1}
\end{equation} 
by induction on $n$. The base case $n = 0$ holds by construction of $\sigma_0^2$ and $\alpha$. Assume now \eqref{eq:induction_pair} holds at time $n$. Then, by the choice of
$\alpha$, we immediately get
\[
|x^\alpha[\rho_{n+1}]-x^\star|^2\le \veps\frac{\mu}{4C_1}.
\]
Moreover, using the almost-contractivity of the error functional, \cref{prop:h2_decay_dt1} we have
\begin{align*}
    \err(\OX_{n+1}, \OV_{n+1})
    &\le (1-\mu)\,\err(\OX_n,\OV_n) + C_1\bigl(\sigma_0^2 + |x^\alpha[\rho_n]-x^\star|^2 \bigr) \\
    &\le (1-\mu)\,\err(\OX_n,\OV_n) + C_1\Bigl(\veps\frac{\mu}{4C_1}+\veps\frac{\mu}{4C_1}\Bigr) \\
    &= (1-\mu)\,\err(\OX_n,\OV_n) + \veps\frac{\mu}{2}.
\end{align*}
In particular, if $\err(\OX_n,\OV_n)\ge \veps$, then $\veps\le \err(\OX_n,\OV_n)$ and
\[
\err(\OX_{n+1},\OV_{n+1})
\le (1-\mu)\,\err(\OX_n,\OV_n) + \frac{\mu}{2}\,\err(\OX_n,\OV_n)
= (1-\mu/2)\,\err(\OX_n,\OV_n).
\]
If instead $\err(\OX_n,\OV_n)\le \veps$, then
\[
\err(\OX_{n+1},\OV_{n+1})
\le (1-\mu)\,\veps + \veps\frac{\mu}{2} \le \veps.
\]
Combining the two cases yields the one-step estimate
\[
\err(\OX_{n+1},\OV_{n+1})
\le \max\{(1-\mu/2)\err(\OX_n,\OV_n),\veps\}.
\]
In particular, $\err(\OX_{n+1},\OV_{n+1})\le E_0$, so \eqref{eq:induction_pair}
holds at time $n+1$. This closes the induction and implies
\begin{equation}\label{eq:err_final_bound}
\err(\OX_n,\OV_n)\le \max\{(1-\mu/2)^n\err(\OX_0,\OV_0),\,\veps\}\qquad \forall n\ge 0.
\end{equation}
The fact that $\err(\OX_{n}, \OV_{n}) \leq \veps$ for $n\geq n_T$ follows by applying the definition of $n_T$ in the above bound.

\end{proof}

\begin{remark}\label{rmk:constants2} In view of the choice $\sigma_0^2$ in the proof of the theorem and the relation between $\alpha$ and $\sigma_0$ outlined in  \cref{rmk:constants1}, we have that algorithm's constants depend on the tolerance $\veps$ as 
\[
\sigma_0 \lesssim \sqrt{\veps}\qquad \textup{and}\qquad \alpha \gtrsim \frac1\veps \,.
\]
\end{remark}

\section{Mean-field approximation and finite-particle convergence}\label{sec: mfl}
\subsection{Mean-field limit}
To justify the mean-field approximation \eqref{eq:2nd-cbo-td-mf}, we employ a synchronous coupling argument from \cite{MR1108185}. However, establishing this coupling for this second-order model is challenged by the non-Lipschitz nature of the drift and the noise term, and finally the multiplicative noise. Such a challenge is not a unqiue one for PSO, but something CBO variants all have in common. Hence we overcome such a challenge by employing a similar approach from \cite{gerber2025mean, Hui_Huang_2025_cpaa}.

More recently, there were some uniform-in-time mean-field limit results covering  both first-order and second-order CBO where the latter is a continuous time analogue of the model that we study here \cite{gerber2026uniformintimepropagationchaosconsensusbased, ha2026uniformintimepropagationchaossecondorder}. Previously, there was already a finite-time mean field limit result for the same continuous time, second-order model \cite{huang_2021_note}. The recent work \cite{ha2026uniformintimepropagationchaossecondorder} shows that under higher assumption and high friction, low inertia regime, this result holds for an infinite time horizon. 

The architecture of our proof is as follows: First, we establish uniform moment bounds for both the discrete particle system (\cref{lem: pathwise-moment-bound-particle}) and the mean-field process (\cref{lem:pathwise-moment-bound-time-discrete-mf}). These bounds allow us to confine the dynamics to a high-probability compact set where the drift coefficients are locally Lipschitz. We then bound the error of the empirical consensus point using a law-of-large-numbers argument (\cref{lem:law of large number}), which ultimately allows us to close the discrete Grönwall estimate in \cref{thm: mfl-2nd-order-cbo}.

\begin{assumption}\label{asm: E-bound}
    We assume that the objective function $\mathcal{E}$ satisfies the following: 
\begin{enumerate}
    \item For all $x, y \in \R^d\,,$ 
    \begin{align}\label{eq: E-lip}
        |\, \mathcal{E}(x) - \mathcal{E}(y)\,| \leq L_{\mathcal{E}} ( 1 + |x| + |y|) |x - y|
    \end{align}
    for some constant $L_{\mathcal{E}} > 0$.
    \item There exist constants $c_l, C_l, c_u,$ and $ C_u > 0$ such that for all $x \in \R^d$,
    \begin{align}
        \mathcal{E}(x) - \mathcal{E}(x^*) &\leq c_u |x|^2 + C_u\,, \label{eq: E-bound-upper} \\
        \mathcal{E}(x) -  \mathcal{E}(x^*) &\geq c_l |x|^2 - C_l\,. \label{eq: E-bound-lower}
    \end{align}
\end{enumerate}
\end{assumption}

First, let us recall some stability estimates for the consensus point $x^\alpha(\cdot)$:
\begin{lemma}{\cite[Corollary 3.3]{gerber2025mean}}\label{lem:stability}
Suppose that $\mathcal{E}$ satisfies \cref{asm: E-bound}. Then for all $R > 0\,,$ there exists $L_{M} = L_M(R) > 0$ such that 
   \begin{equation}
       \forall~(\mu,\nu)\in \cP_{2,R}(\R^d)\times \cP_2(\R^d),\quad
    |x^\alpha(\mu)-x^\alpha(\nu)|\leq L_M W_2(\mu,\nu) \,,
   \end{equation} 
and for all $p\geq 1$, there exists a constant $C_1$ depending on $p, c_u, C_u, c_l, C_l$ such that
\begin{equation}\label{bound of xa}
    \forall \mu\in \mathcal{P}_{p}(\R^d),\quad |x^\alpha(\mu)|^p\leq C_1\left(1+\int_{\R^d}|x|^p\mu(dx)\right)\,.
\end{equation}
\end{lemma}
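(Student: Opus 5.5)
We prove the two estimates of \cref{lem:stability} separately. Both rest on the weight $\omega_\alpha(x)=e^{-\alpha(\calE(x)-\calE(x^*))}\in(0,1]$ together with the quadratic growth bounds \eqref{eq: E-bound-upper}--\eqref{eq: E-bound-lower}. The central tool is a \emph{lower} bound on the normalization $\int\omega_\alpha\,d\mu$. By \eqref{eq: E-bound-upper} and Jensen's inequality (convexity of $\exp$),
\[
\int \omega_\alpha\,d\mu\ \ge\ \exp\Big(-\alpha\big(c_u\textstyle\int|x|^2d\mu+C_u\big)\Big).
\]
For $\mu\in\cP_{2,R}(\R^d)$ (second moment at most $R$) this gives $\int\omega_\alpha d\mu\ge e^{-\alpha(c_uR+C_u)}=:c_R>0$.

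\textbf{Step 1: the moment bound \eqref{bound of xa}.} Jensen's inequality for the probability measure $\omega_\alpha\mu/\int\omega_\alpha d\mu$ gives
\[
|x^\alpha(\mu)|^p\le \frac{\int|x|^p\omega_\alpha\,d\mu}{\int\omega_\alpha\,d\mu}.
\]
We split $\R^d$ into $\{|x|\le K\}$ and its complement. On $\{|x|>K\}$, \eqref{eq: E-bound-lower} makes $\omega_\alpha$ exponentially small relative to the mass of $\omega_\alpha\mu$ near the bulk. Concretely, we adapt the argument of \cite{gerber2025mean}: choose $K^2$ proportional to $1+\int|x|^2d\mu$, so that Markov's inequality puts half of the mass of $\mu$ in $B(0,K)$. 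The ratio of weights $\omega_\alpha(x)/\inf_{B(0,K')}\omega_\alpha$ then decays once $c_l|x|^2-C_l>c_uK'^2+C_u$. This yields a bound of the form $C(1+\int|x|^p d\mu)$. The dependence of the constant on $\alpha$ is hidden in $C_1$, which is acceptable since $\alpha$ is fixed. If this sharp route proves delicate, a simpler fallback is to bound the denominator by $c_R$ directly; this gives a weaker but sufficient estimate for measures with bounded moments.

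\textbf{Step 2: the Lipschitz bound.} Fix $\mu\in\cP_{2,R}$, $\nu\in\cP_2$, and an optimal coupling $\pi$ of $(\mu,\nu)$. Write
\[
x^\alpha(\mu)-x^\alpha(\nu)=\frac{\int x\omega_\alpha d\mu-\int y\omega_\alpha d\nu}{\int\omega_\alpha d\mu}+\Big(\int y\omega_\alpha d\nu\Big)\frac{\int\omega_\alpha d\nu-\int\omega_\alpha d\mu}{\int\omega_\alpha d\mu\int\omega_\alpha d\nu}.
\]
The second term equals $x^\alpha(\nu)\,\big(\int\omega_\alpha d\nu-\int\omega_\alpha d\mu\big)/\int\omega_\alpha d\mu$. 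The numerators are handled through $\pi$. From \eqref{eq: E-lip} and $|e^{-a}-e^{-b}|\le|a-b|$ for $a,b\ge0$,
\[
|x\omega_\alpha(x)-y\omega_\alpha(y)|\le |x-y|+\alpha L_{\calE}|y|(1+|x|+|y|)|x-y|.
\]
Integrating against $\pi$ and applying Cauchy--Schwarz bounds the first numerator by $C(1+m_2(\mu)+m_2(\nu))\,W_2(\mu,\nu)$, up to fourth-moment factors of $\nu$, which need care. Every denominator containing $\mu$ is bounded below by $c_R$.

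\textbf{Main obstacle.} Since only $\mu$ is assumed to lie in $\cP_{2,R}$, the moments of $\nu$ are uncontrolled, and so is $\int\omega_\alpha d\nu$ in the second term. We handle this by a two-case argument. If $W_2(\mu,\nu)\ge1$, we bound $|x^\alpha(\mu)-x^\alpha(\nu)|$ trivially by $|x^\alpha(\mu)|+|x^\alpha(\nu)|$ and control both terms with the bound of Step 1. If $W_2(\mu,\nu)\le1$, then $m_2(\nu)\le(\sqrt{R}+1)^2$, so $\nu\in\cP_{2,R'}$ and the estimates above apply with constants depending only on $R$. We still have to remove the $|y|^2|x-y|$ factor, which naively requires higher moments. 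We will instead estimate the weight difference pointwise: $|y|\omega_\alpha(y)\le C_\alpha$ uniformly by \eqref{eq: E-bound-lower}, so polynomial factors multiplied by a weight stay bounded. Arranging the differences so that every polynomial factor sits next to a weight leaves only $\int|x-y|\,d\pi\le W_2$. In the first case the trivial bound is at most $C(R)\le C(R)\,W_2(\mu,\nu)$, which completes the proof.
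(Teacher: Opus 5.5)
The paper gives no proof of this lemma; it is imported from \cite[Corollary 3.3]{gerber2025mean}, so your argument has to stand on its own. Its architecture is the right one: a Jensen lower bound on $\int\omega_\alpha\,d\mu$, a tilted moment bound uniform in $\mu$, and a split on whether $W_2(\mu,\nu)\ge1$ to handle the asymmetric hypothesis.

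However, the case $W_2(\mu,\nu)\ge1$ is misstated. The sum $|x^\alpha(\mu)|+|x^\alpha(\nu)|$ is not bounded by $C(R)$, since $\nu\in\cP_2(\R^d)$ is arbitrary (take $\nu=\delta_z$ with $|z|\to\infty$). What Step 1 actually gives is $|x^\alpha(\nu)|\le C\bigl(1+m_2(\nu)^{1/2}\bigr)$. Combined with $m_2(\nu)^{1/2}=W_2(\nu,\delta_0)\le\sqrt R+W_2(\mu,\nu)$, this yields $C(R)\bigl(1+W_2(\mu,\nu)\bigr)\le 2C(R)W_2(\mu,\nu)$.

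This step needs the sharp, $\mu$-uniform form of Step 1 applied to $\nu$. Your fallback (bounding the denominator by $c_R$) gives a constant exponential in $m_2(\nu)$, so it is not available here.

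In Step 1 itself, choose $K$ from the $p$-th moment, e.g.\ $K^p=2(1+\int|x|^p\,d\mu)$. For $p\in[1,2)$, a measure in $\cP_p(\R^d)$ need not have a second moment. With $c_lK'^2-C_l=c_uK^2+C_u$, the tail contributes at most $2\int|x|^p\,d\mu$ and the bulk at most $K'^p\lesssim1+\int|x|^p\,d\mu$. The resulting constant is independent of $\alpha$, as the statement asserts.

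The more substantive gap is in the case $W_2(\mu,\nu)\le1$. ``Arranging the differences so that every polynomial factor sits next to a weight'' is exactly what has to be proved, and the pointwise estimate does not deliver it. The best pointwise bound is $|\omega_\alpha(x)-\omega_\alpha(y)|\le\alpha\max\{\omega_\alpha(x),\omega_\alpha(y)\}\,L_{\calE}(1+|x|+|y|)|x-y|$. Here the weight sits at one endpoint while the polynomial involves both. Transferring it with $|y|\le|x|+|x-y|$ leaves terms of order $|x-y|^2$ and $|x-y|^3$, and $\int|x-y|^3\,d\pi$ is not controlled by $W_2(\mu,\nu)$ when only second moments are available.

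The missing fact is that $\omega_\alpha$ and $g(x):=x\,\omega_\alpha(x)$ are \emph{globally} Lipschitz, and this is true. By the lower growth bound, the local Lipschitz constant of $g$ at $z$ is at most $\omega_\alpha(z)\bigl(1+\alpha L_{\calE}|z|(1+2|z|)\bigr)\le e^{\alpha C_l}\sup_{r\ge0}e^{-\alpha c_lr^2}\bigl(1+\alpha L_{\calE}r(1+2r)\bigr)<\infty$. Integrating along the segment $[x,y]$ then gives a global constant. Equivalently, use your pointwise bound when $|x-y|\le1$ and the boundedness of $g$ when $|x-y|>1$. The same argument works for $\omega_\alpha$.

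With this in hand, your proof closes:
\begin{itemize}
\item both numerators in your decomposition are at most $C_\alpha W_1(\mu,\nu)\le C_\alpha W_2(\mu,\nu)$;
\item both denominators are at least $e^{-\alpha(c_uR'+C_u)}$ with $R'=(\sqrt R+1)^2$;
\item $|x^\alpha(\nu)|\le C(1+\sqrt{R'})$ by Step 1.
\end{itemize}
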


We shall also establish the following pathwise moment bound for the particle system.

\begin{lemma}\label{lem: pathwise-moment-bound-particle}
    Let $\mathcal{E}$ satisfy \cref{asm: E-bound} and suppose the initial data satisfies $f_0 \in \mathcal{P}_p(\R^{2d})$ for any fixed $p \geq 2$. Consider the discrete particle system \eqref{eq:2nd-cbo-td-particle}. Then, for any fixed maximum number of steps $n_T > 0$, there exists a constant $\kappa > 0$, independent of $N$, such that
    \begin{align}
        \E\left[ \sup_{0 \leq n \leq n_T} |X_n^i|^p \right] 
        \vee \E\left[ \sup_{0 \leq n \leq n_T} |V_n^i|^p \right] 
        \vee \E\left[ \sup_{0 \leq n \leq n_T} | x^{\alpha}[\rho_n^N] |^p \right] \leq \kappa\,,
    \end{align}
    where $\rho_n^N$ is the empirical measure associated with the particle positions $\{X_n^i\}_{i=1}^N$.
\end{lemma}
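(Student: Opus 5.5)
The plan is a discrete Grönwall argument on the running maxima of the $p$-th moments, made uniform in $N$ by exchangeability. First set $Y_n^i := |X_n^i|^p + |V_n^i|^p$ and $S_n := \max_{0\le k\le n}\bigl(|X_k^i| \vee |V_k^i|\bigr)$. Unrolling the velocity recursion in \eqref{eq:2nd-cbo-td-particle} gives
\[
V_{n}^i=(1-\gamma)^n V_0^i+\sum_{k=0}^{n-1}(1-\gamma)^{n-1-k}\Bigl[\lambda\bigl(x^\alpha[\rho_k^N]-X_k^i\bigr)+\sigma\bigl(\sigma_0+|x^\alpha[\rho_k^N]-X_k^i|\bigr)\xi_k^i\Bigr],
\]
and $X_n^i=X_0^i+\sum_{k=1}^n V_k^i$. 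Since $n_T$ is finite and fixed, the factors $|1-\gamma|^{j}$ for $j\le n_T$ are bounded by a constant depending only on $\gamma$ and $n_T$. Applying $(\sum_{k=1}^m a_k)^p\le m^{p-1}\sum_k a_k^p$ with $m\lesssim n_T$, I bound
\[
\sup_{k\le n}|X_k^i|^p+\sup_{k\le n}|V_k^i|^p\le C\Bigl(|X_0^i|^p+|V_0^i|^p+\sum_{k=0}^{n-1}\bigl(|x^\alpha[\rho_k^N]|^p+|X_k^i|^p\bigr)\bigl(1+|\xi_k^i|^p\bigr)+\sigma_0^p\sum_{k=0}^{n-1}|\xi_k^i|^p\Bigr),
\]
where $C=C(p,n_T,\gamma,\lambda,\sigma)$. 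I use crude bounds rather than a martingale (BDG) inequality: because the horizon is finite and discrete, the supremum over $k\le n_T$ costs at most a factor $n_T$.

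Next I take expectations. The noise $\xi_k^i$ is independent of $\mathcal F_k$, while $X_k^i$ and $\rho_k^N$ are $\mathcal F_k$-measurable, so each product term factorizes:
\[
\E\bigl[(|x^\alpha[\rho_k^N]|^p+|X_k^i|^p)(1+|\xi_k^i|^p)\bigr]=(1+m_p)\,\E\bigl[|x^\alpha[\rho_k^N]|^p+|X_k^i|^p\bigr],
\]
with $m_p:=\E|\xi|^p<\infty$ depending on $d$ and $p$. For the consensus term I apply \eqref{bound of xa} of \cref{lem:stability} to the empirical measure, giving
\[
|x^\alpha[\rho_k^N]|^p\le C_1\Bigl(1+\frac1N\sum_{j=1}^N|X_k^j|^p\Bigr).
\]
By exchangeability of the particles (i.i.d. initial data from $f_0$ together with the symmetric dynamics), $\E|X_k^j|^p=\E|X_k^i|^p$, and therefore $\E|x^\alpha[\rho_k^N]|^p\le C_1(1+\E|X_k^i|^p)$ with no dependence on $N$.

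Setting $a_n:=\E[\sup_{k\le n}(|X_k^i|^p+|V_k^i|^p)]$, the two previous steps combine into
\[
a_n\le C\Bigl(1+\E|X_0|^p+\E|V_0|^p\Bigr)+C\sum_{k=0}^{n-1}a_k .
\]
The discrete Grönwall lemma then yields $a_{n_T}\le C(1+\|f_0\|_p^p)(1+C)^{n_T}=:\kappa_0$, independent of $N$.

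For the supremum of the consensus point, I use $\sup_n|x^\alpha[\rho_n^N]|^p\le C_1\bigl(1+\frac1N\sum_j\sup_n|X_n^j|^p\bigr)$. Its expectation is at most $C_1(1+\kappa_0)$, again by exchangeability, and this completes the proof with $\kappa:=\max\{\kappa_0,\,C_1(1+\kappa_0)\}$.

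The main obstacle is making the Grönwall step legitimate. A priori finiteness of $a_n$ is needed, and I obtain it by induction on $n$: each step is a polynomial map of finitely many variables with $p$-integrable inputs and independent Gaussians. A further subtlety is that the supremum is taken inside the expectation while the consensus point couples all particles; this is handled by the fact that for a finite horizon $\sup_{k\le n}$ is bounded by a sum over $k\le n$, so only exchangeability of single-time moments is required.
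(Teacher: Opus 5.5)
Your proposal is correct and follows the same route as the paper: unroll the scheme, bound the consensus point by \eqref{bound of xa} applied to the empirical measure, use exchangeability of the particles to remove the $N$-dependence, and close with a discrete Gr\"onwall inequality. The differences are minor. You unroll with the explicit factors $(1-\gamma)^{n-1-k}$, so only $|X_k^i|^p$ appears on the right-hand side instead of the paper's coupled system for the $X$- and $V$-moments. You also replace the paper's discrete BDG step by a crude H\"older bound together with the independence of $\xi_k^i$ from $\mathcal F_k$, which is legitimate because the horizon $n_T$ is finite and all constants may depend on it.
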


\begin{proof}
    We proceed by unrolling the discrete scheme into a summation from $j=0$ to $k-1$. For the velocity update, we have:
    \begin{equation}
        V_k^i = V_0^i + \sum_{j=0}^{k-1} \Bigl[ -\gamma V_j^i + \lambda\bigl(x^\alpha[\rho_j^N] - X_j^i\bigr) \Bigr] + \sum_{j=0}^{k-1} \sigma\bigl(\sigma_0 + | x^\alpha[\rho_j^N] - X_j^i |\bigr)\xi_j^i\,.
    \end{equation}
    Using the inequality $(a+b+c)^p \leq 3^{p-1}(a^p + b^p + c^p)$ and applying Hölder's inequality to the deterministic sum, we bound $|V_k^i|^p$:
    \begin{align}\label{eq: unrolled-v}
        |V_k^i|^p &\leq 3^{p-1} |V_0^i|^p + 3^{p-1} k^{p-1} \sum_{j=0}^{k-1} \left| -\gamma V_j^i + \lambda\bigl(x^\alpha[\rho_j^N] - X_j^i\bigr) \right|^p \notag \\
        &\quad + 3^{p-1} \left| \sum_{j=0}^{k-1} \sigma\bigl(\sigma_0 + | x^\alpha[\rho_j^N] - X_j^i |\bigr)\xi_j^i \right|^p\,.
    \end{align}
    We now take the supremum over all $k \in \{0, \dots, n\}$ (where $n \leq n_T$) and bound $k \leq n$:
    \begin{equation}
        \sup_{0 \leq k \leq n} |V_k^i|^p \leq 3^{p-1} |V_0^i|^p + C_p n^{p-1} \sum_{j=0}^{n-1} \Bigl( |V_j^i|^p + |X_j^i|^p + |x^\alpha[\rho_j^N]|^p \Bigr) + 3^{p-1} \sup_{0 \leq k \leq n} |M_k|^p\,,
    \end{equation}
    where $M_k := \sum_{j=0}^{k-1} \sigma_j \xi_j^i$ with $\sigma_j := \sigma(\sigma_0 + | x^\alpha[\rho_j^N] - X_j^i |)$ is a discrete martingale with respect to the natural filtration $\mathcal{F}_k$ generated by the particle history, since $\E[\xi_j^i | \mathcal{F}_j] = 0$.
    
    We take the expectation of both sides. To bound the martingale term, we apply the discrete Burkholder-Davis-Gundy (BDG) inequality \cite[Chapter VII]{shiryaev1995probability}:
    \begin{equation}
        \E\left[ \sup_{0 \leq k \leq n} |M_k|^p \right] \leq C_p^{BDG} \E\left[ \left( \sum_{j=0}^{n-1} \sigma_j^2 |\xi_j^i|^2 \right)^{p/2} \right]\,.
    \end{equation}
    Applying Hölder's inequality to the sum inside the expectation gives $(\sum_{j=0}^{n-1} A_j)^{p/2} \leq n^{p/2-1} \sum_{j=0}^{n-1} A_j^{p/2}$. Using the independence of $\xi_j^i$ and the fact that $\E[|\xi_j^i|^p] < \infty$, we obtain:
    \begin{align}
        \E\left[ \sup_{0 \leq k \leq n} |M_k|^p \right] &\leq \widetilde{C}_p n^{p/2-1} \sum_{j=0}^{n-1} \E\left[ |\sigma_j|^p \right] \notag \\
        &\leq \widehat{C}_p n^{p/2-1} \sum_{j=0}^{n-1} \E\left[ 1 + |X_j^i|^p + |x^\alpha[\rho_j^N]|^p \right]\,.
    \end{align}
    
    Similarly, unrolling the position update $X_k^i = X_0^i + \sum_{j=0}^{k-1} V_{j+1}^i$, taking the $p$-th power, applying Hölder's inequality, taking the supremum, and taking the expectation yields:
    \begin{equation}\label{eq: unrolled-x}
        \E\left[ \sup_{0 \leq k \leq n} |X_k^i|^p \right] \leq 2^{p-1} \E|X_0^i|^p + 2^{p-1} n^{p-1} \sum_{j=0}^{n-1} \E\left[ \sup_{0 \leq m \leq j+1} |V_m^i|^p \right]\,.
    \end{equation}
    
    Now, let $E_n := \E\left[ \sup_{0 \leq k \leq n} |X_k^i|^p \right]$ and $F_n := \E\left[ \sup_{0 \leq k \leq n} |V_k^i|^p \right]$. Using the  bound \eqref{bound of xa}, we can bound the empirical consensus point:
    \begin{equation}
        \E\left[ \sup_{0 \leq j \leq n} |x^\alpha[\rho_j^N]|^p \right] \leq C_1 \left( 1 + \frac{1}{N}\sum_{l=1}^N \E\left[ \sup_{0 \leq j \leq n} |X_j^l|^p \right] \right) = C_1(1 + E_n)\,.
    \end{equation}
    
    Combining all the estimates into our bounds for $E_n$ and $F_n$, and grouping terms dependent on $n$ into a single constant $C(n_T)$ (since $n \leq n_T$), we arrive at a coupled system of inequalities:
    \begin{align}
        F_n &\leq C(n_T) \left( 1 + \E|V_0^i|^p + \sum_{j=0}^{n-1} \left( E_j + F_j \right) \right)\,, \\
        E_n &\leq C(n_T) \left( 1 + \E|X_0^i|^p + \sum_{j=0}^{n} F_j \right)\,.
    \end{align}
    Let $S_n := E_n + F_n$. Substituting $F_n$ into the bound for $E_n$ and re-arranging reveals that $S_n$ satisfies a discrete Grönwall inequality of the form:
    \begin{equation}
        S_n \leq C_A + C_B \sum_{j=0}^{n-1} S_j\,,
    \end{equation}
    where $C_A$ depends on the initial moments $\E|X_0^i|^p + \E|V_0^i|^p$, which are finite by assumption and independent of $N$. By the discrete Grönwall lemma, we conclude:
    \begin{equation}
        S_n \leq C_A \exp(C_B n) \leq C_A \exp(C_B n_T) =: \kappa\,.
    \end{equation}
    Because this holds for $S_{n_T} = E_{n_T} + F_{n_T}$, both $\E[\sup_{0 \leq k \leq n_T} |X_k^i|^p]$ and $\E[\sup_{0 \leq k \leq n_T} |V_k^i|^p]$ are bounded by $\kappa$. The bound on the consensus point follows immediately.
\end{proof}

Similarly, we establish the corresponding moment bounds for the solution to the mean-field equation at the discrete level.

\begin{lemma}\label{lem:pathwise-moment-bound-time-discrete-mf}
    Let $\mathcal{E}$ satisfy \cref{asm: E-bound} and suppose $f_0 \in \mathcal{P}_p(\R^{2d})$ for any  $p \geq 2$. Consider the mean-field \eqref{eq:2nd-cbo-td-mf} with initial data distributed according to $f_0$. Then, for any fixed maximum number of steps $n_T > 0$, there exists a constant $\kappa > 0$
    \begin{align}
        \E\left[ \sup_{0 \leq n \leq n_T} |\OX_n|^p \right] 
        \vee \E\left[ \sup_{0 \leq n \leq n_T} |\OV_n|^p \right] 
        \vee  \sup_{0 \leq n \leq n_T} | x^{\alpha}[\rho_n] |^p  \leq \kappa\,,
    \end{align}
    where $\rho_n$ is the law of the position $\OX_n$ at step $n$.
\end{lemma}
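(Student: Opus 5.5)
The plan is to repeat the unrolling argument of \cref{lem: pathwise-moment-bound-particle}, now for the single nonlinear process $(\OX_n,\OV_n)$. The one structural change is that the consensus point $x^\alpha[\rho_n]$ is deterministic, because $\rho_n=\mathrm{Law}(\OX_n)$. This makes the argument slightly simpler. Fix $p\ge2$ and $n_T$, and set
\[
\OE_n:=\E\Big[\sup_{0\le k\le n}|\OX_k|^p\Big],\qquad \OF_n:=\E\Big[\sup_{0\le k\le n}|\OV_k|^p\Big].
\]
Here $\OE_n,\OF_n$ are only notation for this plan; in the written proof I would name them $E_n,F_n$ to avoid new macros.

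\textbf{Step 1: unroll the velocity recursion.} Write $V$ as $\OV_0$ plus the drift sum plus the martingale $M_k:=\sum_{j<k}\sigma(\sigma_0+|x^\alpha[\rho_j]-\OX_j|)\xi_j$. The martingale property holds with respect to $\mathcal F_k=\sigma(\OX_0,\OV_0,\xi_0,\dots,\xi_{k-1})$, since each coefficient is $\mathcal F_j$-measurable and $\xi_j$ is independent of $\mathcal F_j$. Then I would:
\begin{itemize}
\item apply $(a+b+c)^p\le 3^{p-1}(a^p+b^p+c^p)$ and H\"older to the drift sum, which produces terms $n^{p-1}\sum_{j<n}(|\OV_j|^p+|\OX_j|^p+|x^\alpha[\rho_j]|^p)$;
\item bound the martingale term with the discrete BDG inequality, followed by H\"older and $\E|\xi_j|^p<\infty$, giving $C_p n^{p/2-1}\sum_{j<n}(1+\E|\OX_j|^p+|x^\alpha[\rho_j]|^p)$.
\end{itemize}
The position update $\OX_k=\OX_0+\sum_{j<k}\OV_{j+1}$ gives $\OE_n\le 2^{p-1}\E|\OX_0|^p+2^{p-1}n^{p-1}\sum_{j<n}\OF_{j+1}$.

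\textbf{Step 2: control the consensus point (the key step).} Apply the bound \eqref{bound of xa} of \cref{lem:stability} to $\mu=\rho_j$:
\[
|x^\alpha[\rho_j]|^p\le C\Big(1+\E|\OX_j|^p\Big)\le C(1+\OE_j).
\]
This is legitimate only if $\rho_j\in\mathcal P_p$. That is not circular: I would argue by forward induction on $j$ that every finite-step moment is finite. The base case is $f_0\in\mathcal P_p$. For the inductive step, finite $p$-th moments at step $j$ give a finite deterministic $x^\alpha[\rho_j]$, and then finite moments at step $j+1$ follow from the update and Gaussian moments. Since $x^\alpha[\rho_j]$ is deterministic, $\sup_{j\le n}|x^\alpha[\rho_j]|^p\le C(1+\OE_n)$ with no expectation needed. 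This is where the mean-field case is easier than the particle case.

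\textbf{Step 3: close the recursion.} Combining Steps 1 and 2 gives the coupled inequalities
\[
\OF_n\le C(n_T)\Big(1+\E|\OV_0|^p+\sum_{j<n}(\OE_j+\OF_j)\Big),\qquad \OE_n\le C(n_T)\Big(1+\E|\OX_0|^p+\sum_{j\le n}\OF_j\Big).
\]
There is a subtlety: the $\OE_n$ bound involves $\OF_n$ at the current index. I would handle it by substituting the $\OF$ inequality into the $\OE$ one, exactly as in \cref{lem: pathwise-moment-bound-particle}. The sum $S_n=\OE_n+\OF_n$ then satisfies $S_n\le C_A+C_B\sum_{j<n}S_j$. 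The discrete Gr\"onwall lemma yields $S_{n_T}\le C_Ae^{C_Bn_T}=:\kappa$, and Step 2 then bounds $\sup_{n\le n_T}|x^\alpha[\rho_n]|^p$ by the same type of constant.

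The main obstacle I anticipate is only the bookkeeping in Step 2: making sure $\rho_j$ has finite $p$-th moment before \eqref{bound of xa} is invoked, via the forward induction. Everything else is routine.
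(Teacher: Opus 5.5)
Your proposal is correct and follows the argument the paper intends. The paper gives no separate proof, saying only that the bound is established ``similarly'' to \cref{lem: pathwise-moment-bound-particle}, and you reproduce that proof's steps (unrolling, discrete BDG, the consensus-point bound \eqref{bound of xa}, and discrete Gr\"onwall), together with two sound refinements the paper leaves implicit: that $x^\alpha[\rho_j]$ is deterministic, and the forward induction ensuring $\rho_j\in\mathcal{P}_p$ before \eqref{bound of xa} is invoked.
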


We further collect some results from \cite{gerber2025mean}. The first is a bound on the probability of large excursions:
\begin{lemma}{\cite[Lemma 2.5]{gerber2025mean}}\label{lem:bound on probability}
    Let $\{Z_i\}_{i=1}^N$ be a family of i.i.d. $\R$-valued random variables such that $\E[|Z_1|^r]<\infty$ for some $r\geq 2$. Then for all $R>\E[|Z_1|]$, there exists a constant $C>0$ such that
    \begin{equation}
        \proba\left(\frac{1}{N}\sum_{i=1}^NZ_i\geq R\right)\leq CN^{-r/2}\,.
    \end{equation}
\end{lemma}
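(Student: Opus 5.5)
This is a standard lemma that the paper cites from \cite{gerber2025mean}; I would prove it by a centered moment bound followed by Markov's inequality.

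\textbf{Step 1: Center the variables.} Set $m:=\E[Z_1]$ and $Y_i:=Z_i-m$. The $Y_i$ are i.i.d., mean zero, and $\E|Y_1|^r\le 2^{r-1}\big(\E|Z_1|^r+|m|^r\big)<\infty$. Since $R>\E|Z_1|\ge m$, the gap $\eta:=R-m$ is strictly positive. The event then satisfies
\[
\Big\{\tfrac1N\sum_i Z_i\ge R\Big\}=\Big\{\tfrac1N\sum_i Y_i\ge \eta\Big\}\subseteq\Big\{\Big|\sum_i Y_i\Big|\ge N\eta\Big\}.
\]

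\textbf{Step 2: Markov's inequality at order $r$.} This gives
\[
\proba\Big(\Big|\sum_i Y_i\Big|\ge N\eta\Big)\le \frac{\E\big|\sum_{i=1}^N Y_i\big|^r}{(N\eta)^r}.
\]

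\textbf{Step 3: Bound the $r$-th moment of the sum.} This is the only real step. The partial sums $\sum_{i\le k}Y_i$ form a martingale, so I would apply the discrete Burkholder--Davis--Gundy inequality (already used in \cref{lem: pathwise-moment-bound-particle}), or equivalently Marcinkiewicz--Zygmund:
\[
\E\Big|\sum_{i=1}^N Y_i\Big|^r\le C_r\,\E\Big(\sum_{i=1}^N Y_i^2\Big)^{r/2}.
\]
Since $r/2\ge1$, Hölder (Jensen) gives
\[
\Big(\sum_{i=1}^N Y_i^2\Big)^{r/2}\le N^{r/2-1}\sum_{i=1}^N|Y_i|^r,
\]
and taking expectations yields
\[
\E\Big|\sum_{i=1}^N Y_i\Big|^r\le C_r N^{r/2}\,\E|Y_1|^r.
\]

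\textbf{Step 4: Combine.} Substituting Step 3 into Step 2 gives
\[
\proba\Big(\tfrac1N\sum_i Z_i\ge R\Big)\le \frac{C_r\,\E|Y_1|^r}{\eta^r}\,N^{-r/2},
\]
which is the claim with $C:=C_r\,\E|Y_1|^r\,(R-\E Z_1)^{-r}$. This constant depends on $R$, $r$ and the law of $Z_1$, but not on $N$.

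The main obstacle is the $N^{r/2}$ moment bound in Step 3. A naive triangle inequality would only give $N^r$, which yields no decay. BDG or Marcinkiewicz--Zygmund is exactly what exploits the independence and zero mean of the $Y_i$ to recover the square-root cancellation.
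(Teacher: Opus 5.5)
Your proof is correct. You center at $m=\E Z_1\le \E|Z_1|<R$, apply Markov's inequality at order $r$, and use Marcinkiewicz--Zygmund (or discrete BDG) with $\bigl(\sum_i Y_i^2\bigr)^{r/2}\le N^{r/2-1}\sum_i |Y_i|^r$; this yields $\proba\bigl(\frac1N\sum_i Z_i\ge R\bigr)\le C_r\,\E|Y_1|^r\,(R-\E Z_1)^{-r}\,N^{-r/2}$.

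The paper gives no proof of its own and simply imports the lemma from the cited reference. Your argument is the standard self-contained one. The only thing to note is that $C$ depends on $R$ and degenerates as $R\downarrow \E Z_1$, which the statement allows.
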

The second ensures the convergence of the weighted mean for i.i.d. samples:
\begin{lemma}{\cite[Lemma 3.7]{gerber2025mean}}\label{lem:law of large number}
Assume that $\mathcal{E}$ satisfies \cref{asm: E-bound}. Let $0 < p < r.$ For all $\rho_n \in \mathcal{P}_r(\R^d)\,,$ there is a constant $C_1:= C_1( \mathcal{E}, \alpha, p, r, \|x\|_{L^r(\mu)})$ such that for all $N \in \mathbb{N}\,,$
\begin{equation}
    \E\left[\, \sup_{n\in \mathbb{N}} |x^\alpha[\overline{\rho}_n^N ]-x^\alpha[\rho_n]|^p\,\right]\leq C_1 N^{-p/2} \,,
\end{equation}
    where $\overline{\rho}_n^N  = \frac{1}{N} \sum_{i = 1}^N \delta_{\OX^i}$ for $ \left\{ \OX^i \right\}_{i \in \mathbb{N}}  \stackrel {\text{i.i.d.}} \sim \mu$.
\end{lemma}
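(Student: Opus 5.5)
The plan is to reduce the statement to a fixed-time estimate for i.i.d. samples and then treat the supremum in $n$. The dependence on $n$ enters only through $\rho_n$ and through the moment $\|x\|_{L^r(\rho_n)}$, which \cref{lem:pathwise-moment-bound-time-discrete-mf} controls on any finite horizon. Fix $n$ and write $\mu:=\rho_n$ and $\omega_\alpha(x):=e^{-\alpha(\calE(x)-\calE(x^\star))}\in(0,1]$. Define the true and empirical averages
\[
A:=\int x\,\omega_\alpha\,d\mu,\quad B:=\int\omega_\alpha\,d\mu,\quad A_N:=\frac1N\sum_i \OX^i\omega_\alpha(\OX^i),\quad B_N:=\frac1N\sum_i\omega_\alpha(\OX^i).
\]
Then $x^\alpha[\overline\rho^N_n]-x^\alpha[\rho_n]=\frac{A_N-A}{B_N}+\frac{A}{B}\cdot\frac{B-B_N}{B_N}$. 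The denominator $B$ is bounded below by Jensen's inequality together with \eqref{eq: E-bound-upper}:
\[
B\ge\exp\Bigl(-\alpha\int(c_u|x|^2+C_u)\,d\mu\Bigr)=:b_0>0,
\]
and $b_0$ depends only on $\alpha$ and the second moment of $\mu$. Also $|A|/B=|x^\alpha[\rho_n]|$ is bounded by \eqref{bound of xa}.

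The proof then splits on the good event $G:=\{B_N\ge B/2\}$. On $G$ we have
\[
|x^\alpha[\overline\rho^N_n]-x^\alpha[\rho_n]|\le \frac{2}{b_0}\bigl(|A_N-A|+|x^\alpha[\rho_n]|\,|B_N-B|\bigr).
\]
The summands $\OX^i\omega_\alpha(\OX^i)-A$ and $\omega_\alpha(\OX^i)-B$ are centered, i.i.d., and have finite $r$-th moments, the first because $|x\,\omega_\alpha(x)|\le|x|$. The Marcinkiewicz--Zygmund (or Rosenthal) inequality therefore gives
\[
\E|A_N-A|^p+\E|B_N-B|^p\le C\,N^{-p/2}\quad\text{for }p\le r.
\]
On $G^c$ we have the deviation $B-B_N\ge B/2$. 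Applying \cref{lem:bound on probability} to the i.i.d. variables $Z_i=B-\omega_\alpha(\OX^i)$ (which are bounded, so every moment is available), or more directly Markov's inequality applied to $|B_N-B|^{s}$ with large $s$, yields $\proba(G^c)\le C N^{-s/2}$ for $s$ as large as desired. On this bad event we control the difference crudely:
\[
|x^\alpha[\overline\rho^N_n]|\le \max_i|\OX^i|\le\Bigl(\sum_i|\OX^i|^r\Bigr)^{1/r}.
\]
This follows because $x^\alpha[\overline\rho^N_n]$ is a convex combination of the samples. We then use Hölder with exponents $r/p$ and $r/(r-p)$:
\[
\E\bigl[|x^\alpha[\overline\rho^N_n]|^p\mathbf 1_{G^c}\bigr]\le \bigl(N\,\E|\OX^1|^r\bigr)^{p/r}\,\proba(G^c)^{1-p/r}.
\]
Because $p<r$ strictly and $s$ can be taken large, this is $O(N^{-p/2})$. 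This is precisely where the hypothesis $p<r$ is used.

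The step I expect to be the main obstacle is the supremum over $n$ inside the expectation. If the samples are the same across $n$ (i.e.\ $\overline\rho^N_n$ built from $N$ i.i.d.\ copies of the mean-field process), then on a finite horizon $n\le n_T$ I will bound $\E\sup_{n\le n_T}|\cdot|^p\le\sum_{n\le n_T}\E|\cdot|^p$. The fixed-$n$ constants are uniform because $\sup_{n\le n_T}\int|x|^r d\rho_n\le\kappa$ by \cref{lem:pathwise-moment-bound-time-discrete-mf}, so this gives a constant $C(n_T)N^{-p/2}$, which is all that is needed for \cref{thm: mfl-2nd-order-cbo}. For a genuinely uniform-in-$n$ statement one would instead need uniform-in-time moment bounds (e.g.\ from \cref{t:minimizer}, giving $\sup_n\err\le E_0$) together with a maximal inequality for the processes $A_N-A$ and $B_N-B$ indexed by $n$. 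I would first try to obtain this maximal inequality by applying the Marcinkiewicz--Zygmund argument to the path-valued i.i.d.\ variables $(\OX^i_n)_{n}$ with a $\sup_n$ norm.
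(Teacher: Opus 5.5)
\paragraph{Fixed-$n$ argument.} The paper gives no proof of its own. It imports \cite[Lemma 3.7]{gerber2025mean}, which is a single-measure estimate: $\E|x^\alpha[\overline\rho^N]-x^\alpha[\mu]|^p\le C N^{-p/2}$ for $N$ i.i.d.\ samples of a fixed $\mu\in\mathcal P_r(\R^d)$. The paper only ever applies it at a fixed step: $j\le n_T$ in the proof of \cref{thm: mfl-2nd-order-cbo}, and $n_T$ in \cref{cor:empirical-consensus-point}.

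Your fixed-$n$ argument is the standard one for this kind of weighted law of large numbers, and it is correct. You decompose the difference into centred empirical averages divided by the empirical normalisation and work on $\{B_N\ge B/2\}$. On the complement you use H\"older with the slack $p<r$; there $\proba(G^c)$ is in fact exponentially small by Hoeffding, since $0<\omega_\alpha\le 1$. Combined with \cref{lem:pathwise-moment-bound-time-discrete-mf}, this gives a constant uniform over $n\le n_T$, which is exactly what the paper uses. Your bound of $\E\sup_{n\le n_T}|\cdot|^p$ by $\sum_{n\le n_T}\E|\cdot|^p$ even covers a finite-horizon supremum.

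\paragraph{Two minor repairs.} First, for $p<2$ the rate $N^{-p/2}$ comes from second moments, so finite $r$-th moments do not suffice when $r<2$. This is harmless: by \eqref{eq: E-bound-lower}, $|x|\,\omega_\alpha(x)\le e^{\alpha C_l}|x|e^{-\alpha c_l|x|^2}$ is bounded, so both families of summands are bounded and every moment is available. Second, your Jensen lower bound on $B$ needs $\int|x|^2d\mu<\infty$. To get a constant depending only on $\|x\|_{L^r(\mu)}$ for arbitrary $r>0$, take $R=(2\int|x|^rd\mu)^{1/r}$. Then $\mu(B(0,R))\ge\frac12$ and $B\ge\frac12e^{-\alpha(c_uR^2+C_u)}$.

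\paragraph{The uniform-in-$n$ version is false.} Do not pursue it via a maximal inequality for path-valued samples. The $\sup_{n\in\mathbb N}$ in the printed statement, which also conflates $\rho_n$ with $\mu$, is a transcription defect rather than an obstacle. Here is why the bound fails.
\begin{itemize}
\item In the regime of \cref{t:minimizer}, $\sup_n\err(\OX_n,\OV_n)\le E_0$ and $\sup_n|x^\alpha[\rho_n]|<\infty$.
\item Let $A_n$ be the event that all $N$ copies satisfy $|\OX_n^i|+|\OV_n^i|\le K$. By Markov's inequality and independence, $\proba(A_n)\ge 2^{-N}$ for every $n$, so $\proba(\limsup_n A_n)\ge 2^{-N}$.
\item Fix any $y\in\R^d$. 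On $A_n$, the noise has standard deviation at least $\sigma\sigma_0$, so all $N$ copies land in $B(y,1)$ at step $n+1$ with conditional probability at least $\delta(y)^N>0$.
\item L\'evy's conditional Borel--Cantelli lemma then shows this happens for some $n$ with probability at least $2^{-N}$. At that step $x^\alpha[\overline\rho^N_{n+1}]\in B(y,1)$, since it is a convex combination of the samples.
\end{itemize}
Since $y$ is arbitrary, $\sup_n|x^\alpha[\overline\rho^N_n]-x^\alpha[\rho_n]|=\infty$ with probability at least $2^{-N}$. So the expectation is infinite for every $N$. The correct reading of the lemma is fixed $n$, or $n\le n_T$ with a constant growing in $n_T$, and that is what you have proved.
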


\begin{theorem}\label{thm: mfl-2nd-order-cbo}
Suppose that $\mathcal{E}$ satisfies \cref{asm: E-bound} and $f_0\in \mathcal{P}_6(\R^{2d})$. Consider the systems \eqref{eq:2nd-cbo-td-particle} and \eqref{eq:2nd-cbo-td-mf} with initial conditions $X_0^i = \OX_0^i$ and $V_0^i = \OV_0^i$. Then for any fixed maximum number of steps $n_T > 0$, there exists a constant $C(n_T)>0$ independent of $N$ such that
    \begin{equation}
        \sup_{i=1,\dots,N} \E\left[ \sup_{0 \leq n \leq n_T} \left( |X_{n}^i-\OX_{n}^i| + |V_{n}^i-\OV_{n}^i| \right) \right] \leq C(n_T) N^{-1/2}\,.
    \end{equation}
\end{theorem}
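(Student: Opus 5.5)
We argue by synchronous coupling, in the style of \cite{gerber2025mean, Hui_Huang_2025_cpaa}. Let $(\OX_n^i,\OV_n^i)_{i=1}^N$ be $N$ independent copies of the mean-field process \eqref{eq:2nd-cbo-td-mf}. They start from the same initial data as the particles and are driven by the same Gaussian increments $\xi_n^i$. Write $\overline\rho_n^N=\frac1N\sum_i\delta_{\OX_n^i}$ for their empirical measure, and set $\delta X_n^i:=X_n^i-\OX_n^i$ and $\delta V_n^i:=V_n^i-\OV_n^i$. Subtracting the two schemes gives
\[
\delta V_{n+1}^i=(1-\gamma)\delta V_n^i-\lambda\,\delta X_n^i+\lambda\bigl(x^\alpha[\rho_n^N]-x^\alpha[\rho_n]\bigr)+\sigma\Bigl(|x^\alpha[\rho_n^N]-X_n^i|-|x^\alpha[\rho_n]-\OX_n^i|\Bigr)\xi_n^i ,
\]
together with $\delta X_{n+1}^i=\delta X_n^i+\delta V_{n+1}^i$. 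The noise floor $\sigma_0$ cancels. The diffusion difference is bounded by $|x^\alpha[\rho_n^N]-x^\alpha[\rho_n]|+|\delta X_n^i|$ by the reverse triangle inequality. The whole estimate therefore reduces to controlling the consensus error, which we split as
\[
|x^\alpha[\rho_n^N]-x^\alpha[\rho_n]|\le |x^\alpha[\rho_n^N]-x^\alpha[\overline\rho_n^N]|+|x^\alpha[\overline\rho_n^N]-x^\alpha[\rho_n]|.
\]
The second term is handled by \cref{lem:law of large number}, with $p=2$ and $r=6$. This is allowed because \cref{lem:pathwise-moment-bound-time-discrete-mf} gives sixth moments when $f_0\in\cP_6$. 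The result is an $\E[\,\cdot\,^2]\lesssim N^{-1}$ bound, uniform in $n\le n_T$.

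The first term is the main obstacle, because \cref{lem:stability} only gives a Lipschitz bound $L_M(R)\,W_2(\rho_n^N,\overline\rho_n^N)$ when one of the measures lies in $\cP_{2,R}$. Here $W_2(\rho_n^N,\overline\rho_n^N)^2\le\frac1N\sum_j|\delta X_n^j|^2$. We localize as in \cite{gerber2025mean}. Introduce the event
\[
\Omega_R:=\Bigl\{\sup_{n\le n_T}\tfrac1N\textstyle\sum_j|\OX_n^j|^2\le R\Bigr\}.
\]
On $\Omega_R$ the Lipschitz bound holds with $L_M(R)$. On $\Omega_R^c$ we use Cauchy--Schwarz: the moment bounds of \cref{lem: pathwise-moment-bound-particle} and \cref{lem:pathwise-moment-bound-time-discrete-mf}, with \eqref{bound of xa}, control the relevant quantities in $L^2$. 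For the probability we apply \cref{lem:bound on probability} to $Z_j=\sup_{n\le n_T}|\OX_n^j|^2$, with $R>\E Z_1$ and $r=3$ (sixth moments again). This gives $\proba(\Omega_R^c)\lesssim N^{-3/2}$, so the bad-event contribution is $O(N^{-3/4})$ in $L^1$, or better. The sup over $n$ inside the event is harmless since $n_T$ is finite.

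To close the estimate, we work on $\Omega_R$ with a stopped or indicator-weighted error. Let
\[
e_n:=\sup_i\E\Bigl[\sup_{k\le n}\bigl(|\delta X_k^i|^2+|\delta V_k^i|^2\bigr)\mathbf 1_{\Omega_R}\Bigr].
\]
Exchangeability makes this independent of $i$, and the average over $j$ is bounded by $e_n$. We unroll the recursion as in the proof of \cref{lem: pathwise-moment-bound-particle}. Hölder's inequality handles the drift sums with constants depending on $n_T$. The martingale part $\sum_j(\cdots)\xi_j^i$ is handled by BDG (or simply Doob's inequality, since $\E|\xi|^2=d$), whose integrand is bounded by the same two error terms. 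This yields
\[
e_n\le C(n_T,R)\Bigl(N^{-1}+\sum_{k<n}e_k\Bigr).
\]
The discrete Grönwall lemma then gives $e_{n_T}\le C(n_T)N^{-1}$. Jensen's inequality, combined with the $\Omega_R^c$ contribution, gives the stated $L^1$ bound of order $N^{-1/2}$.
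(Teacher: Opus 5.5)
Your architecture is the same as the paper's: synchronous coupling, the split of the consensus error into a coupling part and a law-of-large-numbers part, and the excursion event controlled by \cref{lem:bound on probability} with $r=3$. The gap is in the step that closes the Gr\"onwall recursion. Your event $\Omega_R$ involves $\sup_{n\le n_T}$, so it depends on all Gaussian increments up to time $n_T$ and is not $\mathcal F_k$-measurable. Hence $\mathbf 1_{\Omega_R}M_k$ is not a martingale, and BDG/Doob cannot be applied with the indicator inside. The only available bound is
\[
\E\Bigl[\sup_{k\le n}|M_k|^2\mathbf 1_{\Omega_R}\Bigr]\le \E\Bigl[\sup_{k\le n}|M_k|^2\Bigr]\lesssim \sum_{j<n}\E\Bigl[|\delta X_j^i|^2+|x^\alpha[\rho_j^N]-x^\alpha[\rho_j]|^2\Bigr].
\]
These \emph{unweighted} quantities are not controlled by your indicator-weighted $e_j$, so $e_n\le C(N^{-1}+\sum_{k<n}e_k)$ does not follow. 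You could split them back over $\Omega_R$ and $\Omega_R^c$ and use Cauchy--Schwarz on $\Omega_R^c$, as you propose. But that only yields an $N^{-3/4}$ source term in a recursion for \emph{squared} errors, which degrades the final rate to $N^{-3/8}$.

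There are two ways to repair this.

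\textbf{Adapted localization.} Let $\tau_R:=\inf\{k:\frac1N\sum_l|\OX_k^l|^2>R\}$, which is a stopping time. Run the recursion for the stopped differences $\delta X_{k\wedge\tau_R}^i$, $\delta V_{k\wedge\tau_R}^i$. Then every martingale increment carries the $\mathcal F_j$-measurable factor $\mathbf 1_{\{j<\tau_R\}}$, on which \cref{lem:stability} applies. Only at the end do you pay $\proba(\tau_R\le n_T)^{1/2}\lesssim N^{-3/4}$ in $L^1$. This is the ``stopped'' variant you mention in passing, and it works. The version you actually define does not.

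\textbf{The paper's route.} Keep the unweighted $E_n=\E[\sup_{k\le n}|\delta X_k^i|^2]$ and $F_n=\E[\sup_{k\le n}|\delta V_k^i|^2]$ in the Gr\"onwall argument. Localize only the term $\E|x^\alpha[\rho_j^N]-x^\alpha[\overline\rho_j^N]|^2$. On the good set it is at most $L_M^2\E[W_2^2(\rho_j^N,\overline\rho_j^N)]\le L_M^2E_j$. On the bad set, H\"older with exponents $3$ and $3/2$ together with the sixth moments gives $(\E|\cdot|^6)^{1/3}\proba(\Omega_N^R)^{2/3}\lesssim (N^{-3/2})^{2/3}=N^{-1}$. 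This $(3,3/2)$ split, rather than Cauchy--Schwarz, is what makes the excursion cost of the right order inside the squared recursion. It also removes the need for any indicator bookkeeping.
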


\begin{proof}
    Fixing a particle index $i$, we begin by unrolling the velocity update from $k=0$ to $k=n-1$. Let $\Delta b_j := b(X_j^i, \rho_j^N) - b(\OX_j^i, \rho_j)$. The difference in velocity updates can be written recursively as $V_{j+1}^i - \OV_{j+1}^i = (1 - \gamma)(V_j^i - \OV_j^i) + \lambda \Delta b_j + \sigma \Delta b_j \xi_j^i$. Because the initial conditions are synchronously coupled, $X_0^i - \OX_0^i = 0$ and $V_0^i - \OV_0^i = 0$, summing these differences yields:
    \begin{equation}
        V_n^i - \OV_n^i = - \gamma \sum_{j=0}^{n-1} (V_j^i - \OV_j^i) + \lambda \sum_{j=0}^{n-1} \Delta b_j + \sigma \sum_{j=0}^{n-1} \Delta b_j \xi_j^i\,.
    \end{equation}
    Using the inequality $(a+b+c)^2 \leq 3a^2 + 3b^2 + 3c^2$ and applying the Cauchy-Schwarz inequality to the first two deterministic sums, we bound the squared velocity difference at any step $k \leq n$:
    \begin{equation}
        |V_k^i - \OV_k^i|^2 \leq 3 \gamma^2 k \sum_{j=0}^{k-1} |V_j^i - \OV_j^i|^2 + 3 \lambda^2 k \sum_{j=0}^{k-1} |\Delta b_j|^2 + 3 \left| \sum_{j=0}^{k-1} \sigma \Delta b_j \xi_j^i \right|^2\,.
    \end{equation}
    Taking the supremum over $k \in \{0, \dots, n\}$ and bounding $k \leq n$:
    \begin{align}\label{eq: vel-sup-bound}
        \sup_{0 \leq k \leq n} |V_k^i - \OV_k^i|^2 &\leq 3 \gamma^2 n \sum_{j=0}^{n-1} |V_j^i - \OV_j^i|^2 + 3 \lambda^2 n \sum_{j=0}^{n-1} |\Delta b_j|^2 + 3 \sup_{0 \leq k \leq n} \left| M_k \right|^2\,,
    \end{align}
    where $M_k := \sum_{j=0}^{k-1} \sigma \Delta b_j \xi_j^i$ is a discrete martingale with respect to the filtration $\mathcal{F}_k$ generated by the particle history, since $\E[\xi_j^i | \mathcal{F}_j] = 0$.
    
    Taking the expectation of \eqref{eq: vel-sup-bound}, we apply the discrete BDG inequality:
    \begin{equation}
        \E\left[ \sup_{0 \leq k \leq n} |M_k|^2 \right] \leq C_2^{BDG} \E[|M_n|^2] = C_2^{BDG} d \sigma^2 \sum_{j=0}^{n-1} \E[|\Delta b_j|^2]\,,
    \end{equation}
    where we used the Itô isometry equivalent for discrete martingales with $\E[|\xi_j^i|^2] = d$.
    
    Substituting this back, we obtain a pathwise bound for the velocity:
    \begin{equation}\label{eq: vel-pathwise-expected}
        \E\left[ \sup_{0 \leq k \leq n} |V_k^i - \OV_k^i|^2 \right] \leq 3 \gamma^2 n \sum_{j=0}^{n-1} \E\left[ \sup_{0 \leq m \leq j} |V_m^i - \OV_m^i|^2 \right] + (3 \lambda^2 n + 12 d \sigma^2) \sum_{j=0}^{n-1} \E[|\Delta b_j|^2]\,.
    \end{equation}
    
    For the position update, unrolling $X_k^i - \OX_k^i = \sum_{j=0}^{k-1} (V_{j+1}^i - \OV_{j+1}^i)$ and applying Cauchy-Schwarz yields:
    \begin{equation}\label{eq: pos-pathwise-expected}
        \E\left[ \sup_{0 \leq k \leq n} |X_k^i - \OX_k^i|^2 \right] \leq n \sum_{j=0}^{n-1} \E|V_{j+1}^i - \OV_{j+1}^i|^2 \leq n^2 \E\left[ \sup_{0 \leq k \leq n} |V_k^i - \OV_k^i|^2 \right]\,.
    \end{equation}
    
    Now we must bound the drift difference $\Delta b_j = (X_j^i - \OX_j^i) - \bigl( x^{\alpha}[\rho_j^N] - x^{\alpha}[\rho_j] \bigr)$. By the triangle inequality:
    \begin{align}\label{eq: delta-b-bound}
        \E[|\Delta b_j|^2] &\leq 2 \E\left[ \sup_{0 \leq m \leq j} |X_m^i - \OX_m^i|^2 \right] + 4\E|x^{\alpha}[\rho_j^N] - x^{\alpha}[\overline{\rho}_j^N]|^2 + 4\E|x^{\alpha}[\overline{\rho}_j^N] - x^{\alpha}[\rho_j]|^2 \notag \\
        &\leq 2 \E\left[ \sup_{0 \leq m \leq j} |X_m^i - \OX_m^i|^2 \right] + 4\E|x^{\alpha}[\rho_j^N] - x^{\alpha}[\overline{\rho}_j^N]|^2 + 4 C_1 N^{-1}\,,
    \end{align}
    where the last term is bounded by \cref{lem:law of large number}.

    To handle the non-globally Lipschitz weighted mean, we define the global excursion set $\Omega_N^R$ over the entire evaluated path:
    \begin{equation*}
        \Omega_N^R := \left\{\omega\in\Omega :~ \frac{1}{N}\sum_{l=1}^N \sup_{0 \leq k \leq n_T} |\OX_k^l(\omega)|^2 \geq R\right\}\,.
    \end{equation*}
    By \cref{lem:pathwise-moment-bound-time-discrete-mf}, $\E[ \sup_{k} |\OX_k^l|^{6} ] < \infty$. Thus, applying \cref{lem:bound on probability} with $r=3$, we find $\proba(\Omega_N^R) \leq C N^{-3/2}$.

    Splitting the expectation using indicator functions:
    \begin{align*}
        \E|x^\alpha(\rho_j^N)-x^\alpha(\overline{\rho}_j^N)|^2 &= \E\left[|x^\alpha(\rho_j^N)-x^\alpha(\overline{\rho}_j^N)|^2 \mathbf{1}_{\Omega \setminus \Omega_N^R}\right] + \E\left[|x^\alpha(\rho_j^N)-x^\alpha(\overline{\rho}_j^N)|^2 \mathbf{1}_{\Omega_N^R}\right]\,.  
    \end{align*}
    On the set $\Omega \setminus \Omega_N^R$, the empirical measure is bounded by $R$ for all steps up to $n_T$, allowing us to use \cref{lem:stability} uniformly:
    \begin{align*}
       \E\left[|x^\alpha(\rho_j^N)-x^\alpha(\overline{\rho}_j^N)|^2 \mathbf{1}_{\Omega \setminus \Omega_N^R}\right] &\leq L_M^2 \E[\mathcal{W}_2^2(\rho_j^N,\overline{\rho}_j^N)] \leq L_M^2 \E\left[ \sup_{0 \leq m \leq j} |X_m^i-\OX_m^i|^2 \right]\,.
    \end{align*}
    For the complementary set, Hölder's inequality and the uniform $p=6$ moment bounds yield:
    \begin{align*}
       \E\left[|x^\alpha(\rho_j^N)-x^\alpha(\overline{\rho}_j^N)|^2 \mathbf{1}_{\Omega_N^R}\right] &\leq \E\left[|x^\alpha(\rho_j^N)-x^\alpha(\overline{\rho}_j^N)|^6 \mathbf{1}_{\Omega_N^R}\right]^{1/3} \bigl(\proba(\Omega_N^R)\bigr)^{2/3} \leq C (N^{-3/2})^{2/3} = C N^{-1}\,.
    \end{align*}
    Substituting this back into \eqref{eq: delta-b-bound}, we get:
    \begin{equation}\label{eq: delta-b-final}
        \E[|\Delta b_j|^2] \leq C \E\left[ \sup_{0 \leq m \leq j} |X_m^i - \OX_m^i|^2 \right] + C N^{-1}\,.
    \end{equation}

    Let $E_n := \E\left[ \sup_{0 \leq k \leq n} |X_k^i - \OX_k^i|^2 \right]$ and $F_n := \E\left[ \sup_{0 \leq k \leq n} |V_k^i - \OV_k^i|^2 \right]$. Substituting \eqref{eq: delta-b-final} into \eqref{eq: vel-pathwise-expected}, and utilizing the fact that $E_j \leq j^2 F_j \leq n^2 F_j$ from \eqref{eq: pos-pathwise-expected}, we obtain:
    \begin{align}
        F_n &\leq C n \sum_{j=0}^{n-1} F_j + C n \sum_{j=0}^{n-1} \left( E_j + N^{-1} \right) \notag \\
        &\leq \widetilde{C}(n) \sum_{j=0}^{n-1} F_j + \widetilde{C}(n) N^{-1}\,,
    \end{align}
    where $\widetilde{C}(n)$ is a polynomial in $n$ derived from the constants. Because $n \leq n_T$, we can bound $\widetilde{C}(n) \leq \widetilde{C}(n_T)$.
    
    Applying the discrete Grönwall inequality yields $F_{n_T} \leq C(n_T) N^{-1}$. Consequently, $E_{n_T} \leq n_T^2 F_{n_T} \leq C(n_T) N^{-1}$. 
    
    Finally, using Jensen's inequality:
    \begin{align*}
         \E\left[ \sup_{0 \leq k \leq n_T} \left( |X_{k}^i-\OX_{k}^i| + |V_{k}^i-\OV_{k}^i| \right) \right] &\leq \left( 2 E_{n_T} + 2 F_{n_T} \right)^{1/2} \leq \sqrt{C(n_T)} N^{-1/2}\,.
    \end{align*}
\end{proof}

\subsection{Total error estimate for the numerical scheme}
Now collecting results from \cref{t:minimizer} and \cref{thm: mfl-2nd-order-cbo}
we can establish a quantitative convergence result for the numerical scheme \eqref{eq:2nd-cbo-td-particle}:
\begin{theorem}\label{thmmain}
Under the assumptions of \cref{t:minimizer} and \cref{thm: mfl-2nd-order-cbo}, let $\{(X_{n}^i, V_n^i)_{n=0,\dots,n_T}\}_{i=1}^N$ be the iterations generated by the particle system \eqref{eq:2nd-cbo-td-particle}, where $n_T$ comes from \cref{t:minimizer} such that $\err(\overline{X}_{n_T}, \overline{V}_{n_T}) \leq \varepsilon$ for any prescribed accuracy $\varepsilon>0$. Then the final iterations fulfill the following quantitative error estimate 
\begin{align}
	\expect\left[\left|\frac{1}{N}\sum_{i=1}^N X_{n_T}^i - x^*\right|^2\right] \leq 2C_{\mathrm{MFA}}\frac{1}{N} + \frac{2}{\lambda_{\min}(P)}\varepsilon\,,
\end{align}
where $C_{\mathrm{MFA}} > 0$ depends on $n_T$ as derived in the proof of \cref{thm: mfl-2nd-order-cbo}, and $\lambda_{\min}(P)$ is the minimum eigenvalue of the Lyapunov matrix $P$ defined in \cref{sec: contractivity}.
\end{theorem}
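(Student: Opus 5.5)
We split the error at step $n_T$ through the auxiliary mean-field copies $\OX^i_{n_T}$. These are the i.i.d. processes driven by the same initial data and the same noise $\xi^i_n$ as in \cref{thm: mfl-2nd-order-cbo}. Write
\[
\frac1N\sum_{i=1}^N X^i_{n_T}-x^\star=\frac1N\sum_{i=1}^N\bigl(X^i_{n_T}-\OX^i_{n_T}\bigr)+\frac1N\sum_{i=1}^N\bigl(\OX^i_{n_T}-x^\star\bigr),
\]
and apply $|a+b|^2\le 2|a|^2+2|b|^2$. This produces the two terms of the claimed bound, each with the factor $2$.

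\textbf{Mean-field term.} By Jensen on the average, the first term is at most $\frac1N\sum_i\E|X^i_{n_T}-\OX^i_{n_T}|^2\le\sup_i\E[\sup_{n\le n_T}|X^i_n-\OX^i_n|^2]$. We need the squared version, not the $L^1$ statement of \cref{thm: mfl-2nd-order-cbo}. Its proof in fact gives this directly: it establishes $E_{n_T}=\E[\sup_{k\le n_T}|X^i_k-\OX^i_k|^2]\le C(n_T)N^{-1}$ before the final Jensen step. We set $C_{\mathrm{MFA}}:=C(n_T)$ from that intermediate bound. The point to be careful about is that $n_T$ is the horizon fixed in \cref{t:minimizer}, which depends on $\veps$ and $\err(\OX_0,\OV_0)$ but not on $N$, so $C(n_T)$ is $N$-independent. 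This identification of the constant is the only delicate step.

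\textbf{Optimization term.} Again by Jensen,
\[
\E\Bigl|\frac1N\sum_i(\OX^i_{n_T}-x^\star)\Bigr|^2\le\frac1N\sum_i\E|\OX^i_{n_T}-x^\star|^2=\E|\OX_{n_T}-x^\star|^2,
\]
since each copy has law $\rho_{n_T}$. (Independence would give an even better bound, but it is unnecessary.) Now $|e|^2\le|Z|^2\le Z^\top(P\otimes I_d)Z/\lambda_{\min}(P)$ with $Z=(e,u)$, so $\E|\OX_{n_T}-x^\star|^2\le\err(\OX_{n_T},\OV_{n_T})/\lambda_{\min}(P)\le\veps/\lambda_{\min}(P)$ by \cref{t:minimizer} at $n=n_T$. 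Combining the two terms gives $2C_{\mathrm{MFA}}N^{-1}+2\veps/\lambda_{\min}(P)$.
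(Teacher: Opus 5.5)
Your proposal is correct and follows the paper's proof essentially line by line. It uses the same split through the synchronously coupled mean-field copies with the factor $2$. The first term is handled by Jensen plus the squared coupling bound $E_{n_T}\le C(n_T)N^{-1}$ from inside the proof of \cref{thm: mfl-2nd-order-cbo}, and the second by Jensen plus $\E|\OX_{n_T}-x^\star|^2\le \err(\OX_{n_T},\OV_{n_T})/\lambda_{\min}(P)\le\veps/\lambda_{\min}(P)$. Your remark that the $L^2$ coupling estimate, rather than the stated $L^1$ one, is what is needed matches how the paper defines $C_{\mathrm{MFA}}$.
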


\begin{proof}
	Recall that $\{(X_n^i)_{n=0,\dots,n_T}\}_{i=1}^N$ and $\{(\overline{X}_n^i)_{n=0,\dots,n_T}\}_{i=1}^N$ are the position components of the $N$-particle system \eqref{eq:2nd-cbo-td-particle} and $N$ independent copies of the mean-field dynamics \eqref{eq:2nd-cbo-td-mf} up to step $n_T$, respectively. We split the squared error as follows:
	\begin{align}\label{eqerror}
		\expect\left[\left|\frac{1}{N}\sum_{i=1}^N X_{n_T}^i - x^*\right|^2\right]
		\leq & \, 2 \expect\left[\left|\frac{1}{N}\sum_{i=1}^N(X_{n_T}^i - \overline{X}_{n_T}^i)\right|^2\right] + 2 \expect\left[\left|\frac{1}{N}\sum_{i=1}^N \overline{X}_{n_T}^i - x^*\right|^2\right]\,,
	\end{align}
	which divides the overall error into the mean-field approximation error and the optimization error of the mean-field law.
	
	The first term on the right-hand side of \eqref{eqerror} can be bounded by applying Jensen's inequality and the mean-field coupling estimates in  \cref{thm: mfl-2nd-order-cbo}, which yields
	\begin{equation*}
		\expect\left[\left|\frac{1}{N}\sum_{i=1}^N(X_{n_T}^i - \overline{X}_{n_T}^i)\right|^2\right] \leq \frac{1}{N}\sum_{i=1}^N \expect\left[\left|X_{n_T}^i - \overline{X}_{n_T}^i\right|^2\right] \leq C_{\mathrm{MFA}}\frac{1}{N}\,.
	\end{equation*}
	Finally, the second term follows from the global convergence bound of \cref{t:minimizer}. Applying Jensen's inequality and the bounding property of the tilted error functional $\err(X, V)$ constructed via Stein's theorem, it holds that
	\begin{equation*}
		\expect\left[\left|\frac{1}{N}\sum_{i=1}^N \overline{X}_{n_T}^i - x^*\right|^2\right] \leq \frac{1}{N}\sum_{i=1}^N \expect\left[\left|\overline{X}_{n_T}^i - x^*\right|^2\right] = \expect\left[\left|\overline{X}_{n_T}^1 - x^*\right|^2\right] \leq \frac{1}{\lambda_{\min}(P)}\err(\overline{X}_{n_T}, \overline{V}_{n_T}) \leq \frac{\varepsilon}{\lambda_{\min}(P)}\,.
	\end{equation*}
	Combining the estimates above completes the proof.
\end{proof}

\begin{corollary}[Convergence of the empirical consensus point]
\label{cor:empirical-consensus-point}
Assume the hypotheses of \cref{t:minimizer,thm: mfl-2nd-order-cbo}, and let $n_T$ be chosen as in \cref{t:minimizer} so that $  \err(\overline X_{n_T},\overline V_{n_T})\le \varepsilon\,.$ Then there exists a constant
$\widehat{C}_{\mathrm{MFL}}(n_T)>0$, independent of $N$, such that
\begin{equation}
\label{eq:empirical-consensus-point-error}
    \expect\left[
    \left|x^\alpha[\rho_{n_T}^N]-x^\star\right|^2
    \right] \le
\frac{\widehat{C}_{\mathrm{MFL}}(n_T)}{N}
    +
\frac{3\mu}{4C_1} \varepsilon\,,
\end{equation}
where $\mu$ and \(C_1\) are the constants from  \cref{prop:h2_decay_dt1}. In particular, the weighted consensus point, as an output of the particle algorithm, converges to the global minimizer up to the finite-particle error and the prescribed optimization tolerance.
\end{corollary}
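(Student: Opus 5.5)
We split the error with the triangle inequality through the mean-field consensus point $x^\alpha[\rho_{n_T}]$, and additionally through the consensus point of $N$ i.i.d. mean-field copies, $\overline{\rho}^N_{n_T}=\frac1N\sum_i\delta_{\OX^i_{n_T}}$. Using $(a+b+c)^2\le 3(a^2+b^2+c^2)$,
\begin{align*}
\expect\bigl|x^\alpha[\rho^N_{n_T}]-x^\star\bigr|^2
&\le 3\,\expect\bigl|x^\alpha[\rho^N_{n_T}]-x^\alpha[\overline{\rho}^N_{n_T}]\bigr|^2
+3\,\expect\bigl|x^\alpha[\overline{\rho}^N_{n_T}]-x^\alpha[\rho_{n_T}]\bigr|^2
+3\,\bigl|x^\alpha[\rho_{n_T}]-x^\star\bigr|^2 .
\end{align*}
This splitting produces the factor $3$ in front of $\mu\varepsilon/(4C_1)$.

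The third term is deterministic and is controlled by the bootstrap in the proof of \cref{t:minimizer}. The induction \eqref{eq:induction_pair} gives $|x^\alpha[\rho_n]-x^\star|^2\le \veps\mu/(4C_1)$ for every $n$, in particular for $n=n_T$. Hence this term contributes exactly $\frac{3\mu}{4C_1}\veps$. This is where we use that the same $\alpha,\sigma_0$ chosen in \cref{t:minimizer} are in force.

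The second term is the law-of-large-numbers error for i.i.d.\ samples of $\rho_{n_T}$. By \cref{lem:law of large number} with $p=2$ and $r=6$, it is bounded by $C N^{-1}$; the needed sixth moment comes from \cref{lem:pathwise-moment-bound-time-discrete-mf} with $f_0\in\cP_6$.

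The first term is the main obstacle, because $x^\alpha$ is only locally Lipschitz in $W_2$. We repeat the excursion-set argument from the proof of \cref{thm: mfl-2nd-order-cbo}. On $\Omega\setminus\Omega^R_N$ the measure $\overline{\rho}^N_{n_T}$ lies in $\cP_{2,R}$, so \cref{lem:stability} gives
\[
|x^\alpha[\rho^N_{n_T}]-x^\alpha[\overline{\rho}^N_{n_T}]|^2\le L_M^2 W_2^2(\rho^N_{n_T},\overline{\rho}^N_{n_T})\le L_M^2\frac1N\sum_i|X^i_{n_T}-\OX^i_{n_T}|^2 .
\]
Its expectation is at most $L_M^2 C(n_T)N^{-1}$, by the second-moment estimate $E_{n_T}\le C(n_T)N^{-1}$ established inside the proof of \cref{thm: mfl-2nd-order-cbo} and exchangeability.

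On $\Omega^R_N$ we use Hölder's inequality. The sixth moments of both consensus points are bounded via \eqref{bound of xa} together with \cref{lem: pathwise-moment-bound-particle,lem:pathwise-moment-bound-time-discrete-mf}. Combined with $\proba(\Omega^R_N)\le CN^{-3/2}$ from \cref{lem:bound on probability}, this gives a contribution of order $(N^{-3/2})^{2/3}=N^{-1}$.

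Collecting the three bounds, the constants from the first two terms combine into $\widehat C_{\mathrm{MFL}}(n_T)$, which is independent of $N$, and the third term gives $\frac{3\mu}{4C_1}\veps$. This proves \eqref{eq:empirical-consensus-point-error}.
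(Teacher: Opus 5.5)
Your proposal is correct and follows the paper's proof essentially verbatim. It uses the same three-term split through $x^\alpha[\overline\rho^N_{n_T}]$ and $x^\alpha[\rho_{n_T}]$: the coupling term is handled by the excursion-set localization and H\"older argument from \cref{thm: mfl-2nd-order-cbo}, the middle term by \cref{lem:law of large number}, and the last term by the bootstrap bound \eqref{eq:induction_pair} at $n=n_T$. You actually spell out the localization step in more detail than the paper, which only invokes ``the same logic''; in particular you place $\overline\rho^N_{n_T}$ in $\cP_{2,R}$ and use the $L^2$ estimate $E_{n_T}\le C(n_T)N^{-1}$ from inside the proof of \cref{thm: mfl-2nd-order-cbo}.
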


\begin{proof}
Let $\overline\rho_{n_T}^N
:=
\frac1N\sum_{i=1}^N\delta_{\overline X_{n_T}^i}\,,$ where
\((\overline X_n^i,\overline V_n^i)_{i=1}^N \) are independent copies of the mean-field dynamics \eqref{eq:2nd-cbo-td-mf}, synchronously coupled with the particle system. We decompose
\begin{align}
    \left|x^\alpha[\rho_{n_T}^N]-x^\star\right|^2 &\le
    3\left|
    x^\alpha[\rho_{n_T}^N] -  x^\alpha[\overline\rho_{n_T}^N] \right|^2 + 3\left| x^\alpha[\overline\rho_{n_T}^N] -x^\alpha[\rho_{n_T}] \right|^2 + 3\left| x^\alpha[\rho_{n_T}] - x^\star \right|^2.
\label{eq:consensus-error-split}
\end{align}

We first estimate the difference between the interacting empirical
consensus point and the empirical consensus point generated by the
independent mean-field copies. The same logic involving localization (via a set $\Omega_N^R$ of large excursion) and stability argument from the proof of \cref{thm: mfl-2nd-order-cbo} gives
\begin{equation}
\label{eq:consensus-coupling-bound}
    \expect\left[ \left| x^\alpha[\rho_{n_T}^N]
- x^\alpha[\overline\rho_{n_T}^N] \right|^2 \right] \le \frac{C_{\mathrm{coup}}(n_T)}{N}
\end{equation}
for some constant \(C_{\mathrm{coup}}(n_T)>0\) independent of \(N\). Next, since \(\overline X_{n_T}^1,\dots,\overline X_{n_T}^N\) are i.i.d. with law \(\rho_{n_T}\), the law-of-large-numbers estimate \cref{lem:law of large number} gives
\begin{equation}
\label{eq:consensus-lln-bound}
    \expect\left[ \left| x^\alpha[\overline\rho_{n_T}^N] - x^\alpha[\rho_{n_T}] \right|^2 \right] \le \frac{C_{\mathrm{LLN}}(n_T)}{N}
\end{equation}
for some constant \(C_{\mathrm{LLN}}(n_T)>0\) independent of \(N\). Finally, the bootstrap estimate established in the proof of
\cref{t:minimizer} implies
\begin{equation}
\label{eq:mean-field-consensus-bootstrap-bound}
    \left| x^\alpha[\rho_{n_T}] -x^\star \right|^2 \le \varepsilon\frac{\mu}{4C_1}.
\end{equation}

Taking expectations in \eqref{eq:consensus-error-split} and using
\eqref{eq:consensus-coupling-bound},
\eqref{eq:consensus-lln-bound}, and
\eqref{eq:mean-field-consensus-bootstrap-bound}, we obtain \[
    \expect\left[ \left|x^\alpha[\rho_{n_T}^N]-x^\star\right|^2 \right] \le \frac{ 3C_{\mathrm{coup}}(n_T) + 3C_{\mathrm{LLN}}(n_T)}{N} + \frac{3\mu}{4C_1} \varepsilon\,.
\]
Setting
$ \widehat{C}_{\mathrm{MFL}}(n_T) := 3C_{\mathrm{coup}}(n_T) + 3C_{\mathrm{LLN}}(n_T),$ we get \eqref{eq:empirical-consensus-point-error} as claimed.
\end{proof}

\section*{Acknowledgments}
GB was supported by the Wolfson Fellowship of the Royal Society “Uncertainty quantification, data-driven simulations and learning of multiscale complex systems governed by PDEs” of Prof. L. Pareschi at Heriot-Watt University. HH was partially supported by the Start-up grant from Hunan University. DK is supported by NSF CAREER Award 2340762 by Prof. F. Hoffmann at California Institute of Technology.

\printbibliography

\appendix

\section{Proof of auxiliary lemmas}

\begin{proof}[Proof of \cref{prop: h2-decay-small-dt}]\label{pf: h2-small-dt}
    We introduce the error variables as before:
    \[
    e_n:=\overline X_n-x^\star,\qquad
    u_n:=\overline V_n-\frac{\lambda}{\gamma}(x^\star-\overline X_n)
    =\overline V_n+\frac{\lambda}{\gamma} e_n,\qquad
    Z_n:=\binom{e_n}{u_n}\in\R^{2d}.
    \]
    By construction, $\err(\overline X_n,\overline V_n)=\E\big[Z_n^\top(P\otimes I_d)Z_n\big]$. Let $m_n := x^\alpha[\rho_n]$. 
    A direct computation from the $\Delta t$-dependent update rules yields the compact vector recursion:
    \begin{equation}\label{eq:Zn-recursion-small-dt}
    Z_{n+1}=(A(\Delta t)\otimes I_d)Z_n + \lambda\Delta t\,(b_{\Delta t}\otimes I_d)(x^\alpha[\rho_n]-x^\star)
    +\sigma\sqrt{\Delta t}\,(\sigma_0+|x^\alpha[\rho_n]-\overline X_n|)\,(b_{\Delta t}\otimes I_d)\xi_n\,,
    \end{equation}
    where the structure vector is $b_{\Delta t} := \binom{\Delta t}{1 + \frac{\lambda}{\gamma}\Delta t}$.

    Because $\xi_n \in \R^d$ is independent of $\mathcal{F}_n$ with $\E[\xi_n|\mathcal{F}_n]=0$ and $\E[|\xi_n|^2|\mathcal{F}_n]=d$, the cross-terms involving the noise vanish. Expanding $\E\big[Z_{n+1}^\top(P\otimes I_d)Z_{n+1}\big]$ yields:
    \begin{align}
        \err(\overline X_{n+1},\overline V_{n+1})
        &=\E\big[Z_n^\top(A(\Delta t)^\top P A(\Delta t)\otimes I_d)Z_n\big]
        +\lambda^2(\Delta t)^2(b_{\Delta t}^\top P b_{\Delta t})\,|x^\alpha[\rho_n]-x^\star|^2
        \nonumber\\
        &\quad
        +2\lambda\Delta t\,\E\Big[\big\langle (b_{\Delta t}^\top P A(\Delta t)\otimes I_d)Z_n,\ x^\alpha[\rho_n]-x^\star\big\rangle\Big] \nonumber\\
        &\quad + d\,\sigma^2 \Delta t(b_{\Delta t}^\top P b_{\Delta t})\,\E\big[(\sigma_0+|x^\alpha[\rho_n]-\overline X_n|)^2\big].
        \label{eq:H2-split-small-dt}
    \end{align}

    \smallskip
    \noindent\textbf{Step 1: Linear part via Taylor Expansion.} 
    From \cref{lem: dissipation-expansion}, we have the asymptotic expansion $A(\Delta t)^\top P A(\Delta t) = P - \Delta t Q_0 + \Delta t^2 R(\Delta t)$. Because $Q_0 \succ 0$ by assumption, let $\nu_0 := \lambda_{\min}(Q_0) > 0$. For $\Delta t$ sufficiently small, the remainder term is dominated, yielding $A(\Delta t)^\top P A(\Delta t) \preceq P - \Delta t \frac{\nu_0}{2} I_2$.
    Using the Rayleigh quotient lower bound $|z|^2 \ge \frac{1}{\lambda_{\max}(P)} z^\top(P\otimes I_d)z$, we obtain:
    \begin{equation}\label{eq:lin_part_bound_small}
    \E\big[Z_n^\top(A(\Delta t)^\top P A(\Delta t)\otimes I_d)Z_n\big] \le \left(1 - \Delta t \frac{\nu_0}{2\lambda_{\max}(P)}\right) \err(\overline X_n,\overline V_n) =: (1 - \hat{\mu}\Delta t) \err(\overline X_n,\overline V_n),
    \end{equation}
    where the continuous-time base rate is defined as $\hat{\mu} := \nu_0 / (2\lambda_{\max}(P)) > 0$.

    \smallskip
    \noindent\textbf{Step 2: Cross term.}
    By Cauchy--Schwarz and Young's inequality with parameter $\varepsilon \Delta t \lambda_{\min}(P) > 0$:
    \begin{align}\label{eq:cross_bound_small}
    2\lambda\Delta t\,\E\Big[\big\langle (b_{\Delta t}^\top P A(\Delta t)\otimes I_d)Z_n,\ x^\alpha[\rho_n]-x^\star\big\rangle\Big]
    &\le \varepsilon\Delta t \lambda_{\min}(P) \E|Z_n|^2 + \frac{\lambda^2 \Delta t}{\varepsilon \lambda_{\min}(P)}\,\|A(\Delta t)^\top P b_{\Delta t}\|_2^2\,|x^\alpha[\rho_n]-x^\star|^2 \nonumber \\
    &\le \varepsilon\Delta t\,\err(\overline X_n,\overline V_n)+\frac{\lambda^2 \Delta t}{\varepsilon \lambda_{\min}(P)}\,\|A(\Delta t)^\top P b_{\Delta t}\|_2^2\,|x^\alpha[\rho_n]-x^\star|^2.
    \end{align}

    \smallskip
    \noindent\textbf{Step 3: Noise term.}
    Using $|x^\alpha[\rho_n]-\overline X_n| \le |x^\alpha[\rho_n]-x^\star|+|e_n|$ and the inequality $(x+y+z)^2 \le 3(x^2+y^2+z^2)$:
    \[
    (\sigma_0+|x^\alpha[\rho_n]-\overline X_n|)^2 \le 3\big(\sigma_0^2+|x^\alpha[\rho_n]-x^\star|^2+|e_n|^2\big).
    \]
    Since $\E|e_n|^2 \le \E|Z_n|^2 \le \frac{1}{\lambda_{\min}(P)}\err(\overline X_n,\overline V_n)$, we have:
    \begin{align}\label{eq:noise_bound_small}
    &d\,\sigma^2 \Delta t(b_{\Delta t}^\top P b_{\Delta t})\,\E\big[(\sigma_0+|x^\alpha[\rho_n]-\overline X_n|)^2\big] \nonumber \\
    &\qquad\le \Delta t \frac{3d\sigma^2(b_{\Delta t}^\top P b_{\Delta t})}{\lambda_{\min}(P)}\err(\overline X_n,\overline V_n) + 3d\sigma^2 \Delta t(b_{\Delta t}^\top P b_{\Delta t})\big(\sigma_0^2+|x^\alpha[\rho_n]-x^\star|^2\big).
    \end{align}
    
    \smallskip
    \noindent\textbf{Step 4: Final combination.}
    \eqref{eq:lin_part_bound_small}, \eqref{eq:cross_bound_small}, and \eqref{eq:noise_bound_small} into \eqref{eq:H2-split-small-dt}, the multiplier for $\err(\overline X_n,\overline V_n)$ becomes:
    \[
    1 - \Delta t \left( \hat{\mu} - \varepsilon - \frac{3d\sigma^2(b_{\Delta t}^\top P b_{\Delta t})}{\lambda_{\min}(P)} \right).
    \]
    We choose the Young's inequality parameter $\varepsilon := \hat{\mu}/4$. Then, by assuming the noise $\sigma$ is sufficiently small such that $\frac{3d\sigma^2(b_{\Delta t}^\top P b_{\Delta t})}{\lambda_{\min}(P)} \le \hat{\mu}/4$, the net bracket is $1 - \Delta t(\hat{\mu}/2)$. We set the final decay rate $\mu_0 := \hat{\mu}/2 > 0$.
    
    Finally, fix \(\Delta t_0>0\) sufficiently small so that the estimate for the linear part and the small-noise absorption above hold for every \(0<\Delta t\leq \Delta t_0\). After collecting the coefficients of \(\lvert x^\alpha[\rho_n]-x^\star\rvert^2\) and factoring out \(\Delta t\), define
\[
\begin{aligned} 
    K :=
    \sup_{0<\Delta t\leq \Delta t_0}
    \Bigg\{&
    \lambda^2\Delta t\, b_{\Delta t}^\top P b_{\Delta t}
    +
    \frac{\lambda^2}{\varepsilon\,\lambda_{\min}(P)}
    \left\|A(\Delta t)^\top P b_{\Delta t}\right\|_2^2
    +
    3d\sigma^2\, b_{\Delta t}^\top P b_{\Delta t}
    \Bigg\}\,.
\end{aligned}
\]
Since \(A(\Delta t)\) and \(b_{\Delta t}\) depend continuously on \(\Delta t\) and remain bounded on the compact interval \([0,\Delta t_0]\), one has \(K<\infty\). Thus, \(K\) is independent of the individual step size \(\Delta t\in(0,\Delta t_0]\). The coefficient of \(\sigma_0^2\) is bounded by the same constant. Therefore, for every \(0<\Delta t\leq \Delta t_0\), \eqref{eq:H2-lemma-small-dt-form} holds. 
\end{proof}

\begin{lemma}[Asymptotic Expansion of the Dissipation Matrix]\label{lem: dissipation-expansion}
    Let $P = \begin{pmatrix} 1 & \theta \\ \theta & C \end{pmatrix}$ and let $A(\Delta t)$ be the transition matrix defined in \cref{prop:h2_decay_dt1}. We define the time-dependent dissipation matrix $Q(\Delta t)$ via the relation:
    \begin{equation}\label{eq:Q_def}
        A(\Delta t)^T P A(\Delta t) = P - \Delta t Q(\Delta t).
    \end{equation}
    Then, the entries of $Q(\Delta t)$ are polynomials in $\Delta t$. In the limit $\Delta t \to 0$, the leading-order dissipation matrix $Q_0 := \lim_{\Delta t \to 0} Q(\Delta t)$ is given by:
    \begin{equation}\label{eq:Q0_explicit}
        Q_0 = \begin{pmatrix} 
        \frac{2\lambda}{\gamma} + \frac{2\theta\lambda^2}{\gamma^2} & \gamma\theta - 1 + \frac{C\lambda^2}{\gamma^2} \\
        \gamma\theta - 1 + \frac{C\lambda^2}{\gamma^2} & 2C\gamma - \frac{2C\lambda}{\gamma} - 2\theta
        \end{pmatrix}.
    \end{equation}
    Furthermore, for every $\lambda,\gamma>0$, the interval
    \[
        \frac{1}{\gamma^2-\lambda+\lambda^2/\gamma^2}
        <
        C
        <
        \frac{\gamma^4}{\lambda^3}
        +
        \frac{\gamma^2}{\lambda^2}
    \]
is nonempty. For every $C$ in this interval and
    \[
        \theta=\frac{1}{\gamma}-\frac{C\lambda^2}{\gamma^3},
    \]
one has both $P\succ0$ and $Q_0\succ0$.
\end{lemma}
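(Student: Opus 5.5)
The plan is to expand $A(\Delta t)$ in powers of $\Delta t$, read off $Q_0$ from the linear term, and then reduce all positivity conditions to scalar inequalities in one dimensionless variable.

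\textbf{Step 1: the expansion and $Q_0$.} The entries of $A(\Delta t)$ in \eqref{eq:matrix-A-small} are polynomials in $\Delta t$ with $A(0)=I_2$. I write
\[
A(\Delta t)=I_2+\Delta t\,B+\Delta t^2 B_2,\qquad
B=\begin{pmatrix}-\frac{\lambda}{\gamma} & 1\\ -\frac{\lambda^2}{\gamma^2} & -\gamma+\frac{\lambda}{\gamma}\end{pmatrix},\qquad
B_2=\begin{pmatrix}0&-\gamma\\0&-\lambda\end{pmatrix}.
\]
Then $A(\Delta t)^\top P A(\Delta t)-P$ is a matrix polynomial in $\Delta t$ that vanishes at $\Delta t=0$. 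Hence it is divisible by $\Delta t$, so $Q(\Delta t)$ defined by \eqref{eq:Q_def} has polynomial entries. Its constant term is
\[
Q_0=-(B^\top P+PB).
\]
Multiplying out with $P=\begin{pmatrix}1&\theta\\ \theta& C\end{pmatrix}$ is routine and gives \eqref{eq:Q0_explicit}. For example, $(PB)_{11}=-\lambda/\gamma-\theta\lambda^2/\gamma^2$, so $(Q_0)_{11}=2\lambda/\gamma+2\theta\lambda^2/\gamma^2$.

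\textbf{Step 2: the choice of $\theta$ makes $Q_0$ diagonal.} Substituting $\theta=1/\gamma-C\lambda^2/\gamma^3$ kills the off-diagonal entry: $\gamma\theta-1+C\lambda^2/\gamma^2=0$. So $Q_0\succ0$ is equivalent to positivity of its two diagonal entries.
\begin{itemize}
\item The $(1,1)$ entry becomes $2\lambda/\gamma+2\lambda^2/\gamma^3-2C\lambda^4/\gamma^5$. This is positive if and only if $C<\gamma^4/\lambda^3+\gamma^2/\lambda^2$.
\item The $(2,2)$ entry becomes $\frac{2}{\gamma}\bigl(C(\gamma^2-\lambda+\lambda^2/\gamma^2)-1\bigr)$. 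Since $\gamma^2-\lambda+\lambda^2/\gamma^2=(\gamma-\lambda/\gamma)^2+\lambda>0$, this is positive if and only if $C>1/(\gamma^2-\lambda+\lambda^2/\gamma^2)$.
\end{itemize}
This is exactly the stated interval.

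\textbf{Step 3: reduction to one variable and nonemptiness.} I set $s:=\lambda/\gamma^2>0$ and $c:=C\gamma^2$. Then the interval becomes
\[
\frac{1}{1-s+s^2}<c<\frac{1+s}{s^3},
\]
and $\theta=\gamma^{-1}(1-cs^2)$. The interval is nonempty because $s^3<(1+s)(1-s+s^2)=1+s^3$.

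\textbf{Step 4: $P\succ0$, the main obstacle.} The delicate point is that $P\succ0$ is not automatic, since $\theta$ is tied to $C$. Because $P_{11}=1>0$, it suffices to show $C>\theta^2$, which in the new variables reads
\[
f(c):=c-(1-cs^2)^2>0 .
\]
The function $f$ is a concave quadratic in $c$, so it is enough to check $f\ge 0$ at both endpoints of the interval.
\begin{itemize}
\item At $c=1/(1-s+s^2)$ we have $1-cs^2=(1-s)/(1-s+s^2)$. The condition reduces to $1-s+s^2\ge(1-s)^2$, i.e.\ $s\ge0$, which holds strictly.
\item At $c=(1+s)/s^3$ we have $1-cs^2=-1/s$. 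The condition reduces to $(1+s)/s^3\ge 1/s^2$, i.e.\ $1\ge0$, which also holds strictly.
\end{itemize}
Concavity then gives $f>0$ on the whole open interval, hence $P\succ0$ together with $Q_0\succ0$.

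I expect the main difficulty to be the bookkeeping in Step 1 and the endpoint check in Step 4. If the concavity argument turns out to fail at some endpoint, my fallback is to compute the roots of $f$ explicitly and compare them with the interval endpoints.
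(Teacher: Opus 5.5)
Your proposal is correct and follows essentially the same route as the paper: the same diagonalizing choice of $\theta$, the same dimensionless variables ($s=\lambda/\gamma^2$ and $c=C\gamma^2$, which the paper calls $r$ and $s$), and the same concavity-plus-endpoint check for $C>\theta^2$, with identical endpoint values $s/(1-s+s^2)^2$ and $1/s^3$. The only difference is cosmetic: you obtain $Q_0=-(B^\top P+PB)$ from $A(\Delta t)=I_2+\Delta t\,B+\Delta t^2B_2$ instead of Taylor-expanding each entry of $A^\top PA$ separately, and this also makes explicit why $Q(\Delta t)$ has polynomial entries, which the paper leaves implicit.
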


\begin{proof}
    We compute the entries of the matrix $P' := A^T P A$ as Taylor expansions in $\Delta t$ to identify the terms of order $O(1)$ and $O(\Delta t)$. From the definition \eqref{eq:Q_def}, the entries of the limit matrix are given by the derivative $Q_{0,ij} = -\frac{d}{d(\Delta t)} (A^T P A)_{ij} \big|_{\Delta t=0}$.
    
    \paragraph{1. The First Diagonal Entry ($Q_{0,11}$).}
    Expanding the term $(A^T P A)_{11} = |A_{11}|^2 + C|A_{21}|^2 + 2\theta A_{11}A_{21}$:
    \begin{align*}
        (A^T P A)_{11} &= \left(1-\tfrac{\lambda}{\gamma}\Delta t\right)^2 + C\left(\tfrac{\lambda}{\gamma}\right)^4(\Delta t)^2 - 2\theta\left(1-\tfrac{\lambda}{\gamma}\Delta t\right)\left(\tfrac{\lambda}{\gamma}\right)^2\Delta t \\
        &= 1 - \tfrac{2\lambda}{\gamma}\Delta t + \tfrac{\lambda^2}{\gamma^2}(\Delta t)^2 + O(\Delta t^2) - 2\theta\tfrac{\lambda^2}{\gamma^2}\Delta t + O(\Delta t^2) \\
        &= 1 - \Delta t \left( \tfrac{2\lambda}{\gamma} + \tfrac{2\theta\lambda^2}{\gamma^2} \right) + O(\Delta t^2).
    \end{align*}
    Since $P_{11}=1$, we identify the dissipation rate:
    \[
    Q_{0,11} = \lim_{\Delta t \to 0} \frac{1 - (A^T P A)_{11}}{\Delta t} = \frac{2\lambda}{\gamma} + \frac{2\theta\lambda^2}{\gamma^2}.
    \]

    \paragraph{2. The Second Diagonal Entry ($Q_{0,22}$).}
    Expanding $(A^T P A)_{22} = |A_{12}|^2 + C|A_{22}|^2 + 2\theta A_{12}A_{22}$:
    \begin{align*}
        (A^T P A)_{22} &= (\Delta t)^2(1-\gamma\Delta t)^2 + C(1-\gamma\Delta t)^2\left(1+\tfrac{\lambda}{\gamma}\Delta t\right)^2 + 2\theta\Delta t(1-\gamma\Delta t)^2\left(1+\tfrac{\lambda}{\gamma}\Delta t\right).
    \end{align*}
    We retain only terms up to $O(\Delta t)$:
    \begin{align*}
        (A^T P A)_{22} &= 0 + C\left[ 1 + 2\Delta t\left(\tfrac{\lambda}{\gamma} - \gamma\right) \right] + 2\theta\Delta t + O(\Delta t^2) \\
        &= C - \Delta t \left( 2C\gamma - \tfrac{2C\lambda}{\gamma} - 2\theta \right) + O(\Delta t^2).
    \end{align*}
    Since $P_{22}=C$, we have:
    \[
    Q_{0,22} = \lim_{\Delta t \to 0} \frac{C - (A^T P A)_{22}}{\Delta t} = 2C\gamma - \frac{2C\lambda}{\gamma} - 2\theta.
    \]

    \paragraph{3. The Off-Diagonal Entry ($Q_{0,12}$).}
    Expanding the cross term $(A^T P A)_{12} = A_{11} A_{12} + C A_{21} A_{22} + \theta (A_{11} A_{22} + A_{12} A_{21})$:
    \begin{itemize}
        \item $A_{11}A_{12} = (1-\tfrac{\lambda}{\gamma}\Delta t)\Delta t(1-\gamma\Delta t) = \Delta t + O(\Delta t^2)$.
        \item $C A_{21} A_{22} = -C \tfrac{\lambda^2}{\gamma^2}\Delta t (1+O(\Delta t)) = - \frac{C\lambda^2}{\gamma^2}\Delta t + O(\Delta t^2)$.
        \item The $\theta$ term involves $A_{11}A_{22} + A_{12}A_{21}$. Note that $A_{11}A_{22} = (1-\tfrac{\lambda}{\gamma}\Delta t)(1-\gamma\Delta t)(1+\tfrac{\lambda}{\gamma}\Delta t) = 1 - \gamma\Delta t + O(\Delta t^2)$, and $A_{12}A_{21} = O(\Delta t^2)$.
        Thus, $\theta(A_{11}A_{22} + \dots) = \theta(1-\gamma\Delta t) + O(\Delta t^2)$.
    \end{itemize}
    Combining these:
    \[
    (A^T P A)_{12} = \Delta t - \frac{C\lambda^2}{\gamma^2}\Delta t + \theta - \gamma\theta\Delta t + O(\Delta t^2) = \theta - \Delta t \left( \gamma\theta - 1 + \frac{C\lambda^2}{\gamma^2} \right) + O(\Delta t^2).
    \]
    Since $P_{12}=\theta$, we have:
    \[
    Q_{0,12} = \lim_{\Delta t \to 0} \frac{\theta - (A^T P A)_{12}}{\Delta t} = \gamma\theta - 1 + \frac{C\lambda^2}{\gamma^2}.
    \]
    
    \paragraph{4. $Q_0$ positive-definiteness}
    To ensure $Q_0$ is positive definite ($Q_0 \succ 0$), we choose $\theta$ to eliminate the off-diagonal entries:
    \[
    \gamma\theta - 1 + \frac{C\lambda^2}{\gamma^2} = 0 \quad \implies \quad \theta = \frac{1}{\gamma} - \frac{C\lambda^2}{\gamma^3}.
    \]
    Substituting this into the diagonal entries yields:
    \begin{align*}
        Q_{11} &= \frac{2\lambda}{\gamma} + \frac{2\lambda^2}{\gamma^2}\left( \frac{1}{\gamma} - \frac{C\lambda^2}{\gamma^3} \right) = \frac{2\lambda}{\gamma} \left( 1 + \frac{\lambda}{\gamma^2} - \frac{C\lambda^3}{\gamma^4} \right), \\
        Q_{22} &= 2C\gamma - \frac{2C\lambda}{\gamma} - 2\left( \frac{1}{\gamma} - \frac{C\lambda^2}{\gamma^3} \right) = 2C \left( \gamma - \frac{\lambda}{\gamma} + \frac{\lambda^2}{\gamma^3} \right) - \frac{2}{\gamma}.
    \end{align*}
    Requiring $Q_{11} > 0$ yields the upper bound $C < \frac{\gamma^4}{\lambda^3} + \frac{\gamma^2}{\lambda^2}$. Requiring $Q_{22} > 0$ yields the lower bound $C > \frac{1}{\gamma^2 - \lambda + \lambda^2/\gamma^2}$. Combined with $C > \theta^2$ for $P \succ 0$, these conditions define the sufficient parameter region presented in \eqref{eq:continuous-conditions}.

    \paragraph{5. Nonemptiness of the parameter region.}
It remains to check that the parameter region defined by
\eqref{eq:continuous-conditions}, together with the constraint $P\succ0$,
is nonempty for every $\lambda,\gamma>0$. Set
\[
    r := \frac{\lambda}{\gamma^2} > 0.
\]
Then the lower and upper bounds in \eqref{eq:continuous-conditions} can be written as
\[
    C_L = \frac{1}{\gamma^2(r^2-r+1)},
    \qquad
    C_U = \frac{1+r}{\gamma^2 r^3}.
\]
Since
\[
    (1+r)(r^2-r+1)=r^3+1>r^3,
\]
we have $C_L<C_U$ for every $r>0$, hence for every $\lambda,\gamma>0$. It remains to verify the positivity of $P$ on this interval. Since
\[
    P=\begin{pmatrix}1&\theta\\ \theta&C\end{pmatrix},
\]
the condition $P\succ0$ is equivalent to $C>\theta^2$. With the substitution
\[
    s:=C\gamma^2,
    \qquad
    \theta=\frac{1-sr^2}{\gamma},
\]
this condition becomes
\[
    s>(1-sr^2)^2.
\]
Define $f(s):=s-(1-sr^2)^2
    =
    -r^4s^2+(1+2r^2)s-1\,.$
Since $f''(s)=-2r^4<0$, the function $f$ is strictly concave. At the two endpoints
\[
    s_L=\frac{1}{r^2-r+1},
    \qquad
    s_U=\frac{1+r}{r^3},
\]
a direct calculation gives
\[
    f(s_L)=\frac{r}{(r^2-r+1)^2}>0,
    \qquad
    f(s_U)=\frac{1}{r^3}>0.
\]
By concavity, $f(s)>0$ for every $s\in[s_L,s_U]$. Therefore, for every
$C\in(C_L,C_U)$ with
\[
    \theta=\frac{1}{\gamma}-\frac{C\lambda^2}{\gamma^3},
\]
the matrix $P$ is positive definite. For such a choice of $C$ and $\theta$, the off-diagonal entries of $Q_0$ vanish by construction, while the bounds $C_L<C<C_U$ give $Q_{0,22}>0$ and $Q_{0,11}>0$, respectively. Hence $Q_0\succ0$ and $P\succ0$ simultaneously. This proves the nonemptiness of the admissible parameter region.
\end{proof}

\section{Parameter values used in numerical experiments}
\label{app:parameters}
 
\Cref{tab:parameters} lists the parameter values used in the numerical
experiments of \cref{sec: numerics} and \cref{fig:mass-concentration}.
The same Stein metric $P$ (depending only on $\lambda$ and $\gamma$ at
$\Delta t = 1$) is used within each column. All simulations use random
seed $42$ for reproducibility, time horizon $N_{\textup{steps}} = 100$,
and step size $\Delta t = 1$.
 
\begin{table}[H]
    \centering
    \small
    \begin{tabular}{l c c c c}
        \toprule
        & \multicolumn{2}{c}{\cref{fig:mass-concentration}}
        & \multicolumn{2}{c}{\cref{fig:log-energy-decay}} \\
        \cmidrule(lr){2-3} \cmidrule(lr){4-5}
        Parameter
        & $d = 1$ panels    & $d = 5$ panels
        & $d = 1$ panels    & $d = 5$ panels \\
        \midrule
        Dimension $d$            & $1$    & $5$     & $1$    & $5$ \\
        Attraction $\lambda$     & $0.4$  & $0.8$   & $0.5$  & $0.8$ \\
        Friction $\gamma$        & $0.6$  & $0.6$   & $0.6$  & $0.6$ \\
        Noise scale $\sigma$     & $0.3$  & $0.2$   & $0.3$  & $0.2$ \\
        Noise floor $\sigma_0$   & $0.3$  & $0.05$  & $\{0.01,\,0.03\}$ & $\{0.01,\,0.03\}$ \\
        Weighting $\alpha$       & $50$   & $30$    & $50$   & $30$ \\
        Particles $N$            & $100$  & $300$   & $100$  & $300$ \\
        Measurement radius $r$   & $\{0.3,\,0.1\}$  & $\{0.3,\,0.1\}$  & ---    & --- \\
        Initial law              & $\mathcal{N}(4,\,1)$
                                 & $\mathcal{N}(0,\,2^2 I_d)$
                                 & $\mathcal{N}(4,\,1)$
                                 & $\mathcal{N}(0,\,2^2 I_d)$ \\
        \bottomrule
    \end{tabular}
    \caption{Parameter values used in the numerical experiments. Values in
    braces (e.g.\ ``$\{0.01,\,0.03\}$'' for $\sigma_0$ in the energy decay
    figure) indicate that the corresponding figure compares two trajectories
    with the same setup except for that parameter. Common to all simulations:
    time horizon $N_{\textup{steps}} = 100$, step size $\Delta t = 1$,
    random seed $42$.}
    \label{tab:parameters}
\end{table}

\end{document}